\renewcommand{\phi}{\varphi}
\newcommand{\QQ}{\mathbf{Q}}
\newcommand{\ZZ}{\mathbf{Z}}
\newcommand{\RR}{\mathbf{R}}
\newcommand{\CC}{\mathbf{C}}
\newcommand{\iso}{\cong}
\newcommand{\mc}[1]{\mathcal{#1}}
\newcommand{\gl}{\mathrm{GL}}
\newcommand{\GO}{\mathrm{GO}}
\renewcommand{\mod}[1]{\text{\! }(\operatorname{mod}\text{\! }#1)}
\renewcommand{\varepsilon}{\epsilon}
\DeclareMathOperator{\sgn}{sgn}
\DeclareMathOperator{\Gal}{Gal}
\DeclareMathOperator{\tr}{tr}
\DeclareMathOperator{\SL}{SL}
\DeclareMathOperator{\Vol}{Vol}
\DeclareMathOperator{\sh}{sh}
\DeclareMathOperator{\Sh}{Sh}
\DeclareMathOperator{\Gr}{Gr}
\DeclareMathOperator{\Jac}{Jac}
\renewcommand{\Re}{\mathrm{Re}}
\newtheorem{theorem}{Theorem}[section]
\newtheorem{proposition}[theorem]{Proposition}
\newtheorem{lemma}[theorem]{Lemma}
\newtheorem{corollary}[theorem]{Corollary}
\newtheorem{definition}[theorem]{Definition}
\newtheorem{example}[theorem]{Example}
\numberwithin{equation}{section}
\newenvironment{customthm}[1]
  {\innercustomthm}
  {\endinnercustomthm}
\theoremstyle{remark}
\newtheorem{remark}[theorem]{Remark}
\newcommand{\OK}{\mc{O}_K}
\newcommand{\Hawaii}{Hawai\kern.005em`\kern.005em\relax i}
\newcommand{\HAWAII}{HAWAI\kern.005em`\kern.005em\relax I }
\begin{document}

\title[On the shape of pure prime degree number fields]{O\MakeLowercase{n the shapes of pure prime degree number fields}}
\keywords{Number fields, lattices, shapes}

\author{Erik Holmes}
\address{
Department of Mathematics\\
University of Calgary\\
Calgary, AB\\
Canada
}
\email{erik.holmes@ucalgary.ca}
\date{\today}

\begin{abstract}

For $p$ prime and $\ell = \frac{p-1}{2}$, we show that the shapes of pure prime degree number fields lie on one of two $\ell$-dimensional subspaces of the space of shapes, and which of the two subspaces is dictated by whether or not $p$ ramifies wildly. When the fields are ordered by absolute discriminant we show that the shapes are equidistributed, in a regularized sense, on these subspaces. We also show that the shape is a complete invariant within the family of pure prime degree fields. This extends the results of Harron, in \cite{purecubics}, who studied shapes in the case of pure cubic number fields. Furthermore we translate the statements of pure prime degree number fields to statements about Frobenius number fields, $F_p = C_p\rtimes C_{p-1}$, with a fixed resolvent field. Specifically we show that this study is equivalent to the study of $F_p$-number fields with fixed resolvent field $\QQ(\zeta_p)$.

\end{abstract}

\subjclass[2010]{11R23, 11F80, 11F67}

\maketitle


\section{Introduction}

The shape of a number field is an Archimedean invariant of the field. 
Roughly speaking, the shape of a degree $n$ number field is the equivalence class of rank $n-1$ lattices obtained from $K$, considered up to scaling, rotation, and reflection. We will define this more precisely in the following section, but first we discuss some previous work in the area and explain a bit of the motivation for this study.

\subsection{Motivation}

In \cite{terr} Terr studied the shape in the case of cubic number fields where he showed that the shapes were equidistributed in the space of rank 2 lattices. This proves that the shapes are, in some sense, random when we consider all cubic number fields and this `randomness' result was later extended to quartic and quintic number fields by Manjul Bhargava and Piper H, in \cite{manjulpiper}. To achieve this generalization, the authors used Bhargava's parametrization of quartic and quintic rings given in \cite{bharIII, bharIV} to show that the shape of quartic and quintic number fields are equidistributed in the space of rank 3 and 4 lattices. The authors conjecture that the same equidistribution result should hold for $n>5$ but, without a nice parametrization of such rings, this is where this line of work with generic fields stops. The authors do, however, mention the following:
\vspace{.07in}

\begin{center} \textit{ ``It is an interesting problem to determine the distribution of lattice shapes for n-ic number fields having a given non-generic (i.e., non-$S_n$) associated Galois group, even heuristically."} \end{center}

\vspace{.07in}

The distribution of non-generic fields has witnessed a number of recent studies in families of field with low degree. We will list many of these studies in the next few sections, highlighting also the study of shape as an invariant of a number field. The purpose of this paper is to generalize the results of Harron on the shapes of pure cubic fields, in \cite{purecubics}, to all pure prime degree fields.

\subsection{Our results}
In this paper we study the shapes, and distribution, of pure prime degree number fields: that is, number fields obtained by adjoining a root of the \textbf{pure prime degree polynomial} $f(x) = x^p - m$, with $m$ $p$-th power free. In this short section we state our results, how they compare to those in \cite{purecubics}, and in the following sections we discuss how these results fit into the recent work in this area. 

Let $\mathcal{S}_{p-1}$ denote the space of shapes of rank $(p-1)$-lattices, and $\ell= \frac{p-1}{2}$. Our first result shows that the shape of pure prime degree fields lie on one of two $\ell$-dimensional subspaces of $\mathcal{S}_{p-1}$, depending on the ramification of $p$ in the field (wild vs tame). We will, as in \cite{purecubics}, denote those pure prime degree $p$ fields where $p$ is wildly (resp. tamely) ramified by Type I (resp. Type II). 
The following theorem shows that the shape of Type I fields are orthorhombic lattices and the shape of Type II fields are ordinary lattices. To state this result we interpret the space of orthorhombic lattices coming from Type I fields as lying on an orbit of the diagonal torus $\mathcal{T}\subseteq \SL_{p-1}(\RR)$. Letting $\mathcal{G}_{wild}({\bf 1})$ be the Gram matrix of the ``square" lattice, represented by the $(p-1)\times (p-1)$ identity matrix, we define $\mathscr{S}_{I}$ to be the orbit of $\mathcal{G}_{wild}({\bf 1})$, under the action of the torus $\mathcal{T}$. For Type II fields things are less uniform: we define the space $\mathscr{S}_{II}$ to be an $\ell$-dimensional subspace of ordinary lattices: see remark \ref{tameconjugation} for more on this. Then we have the following:

\begin{customthm}{A}[Space of shapes]\label{shapespace}
	The shape of a pure prime degree field $K$, denoted $\sh(K)$, lies on one of two $\ell$-dimensional subspaces of $\mathcal{S}_{p-1}$:
	\begin{itemize}
		\item If $K$ is of Type I then $\sh(K) \in \mathscr{S}_I$; i.e. the shape of Type I fields are orthorhombic. 
		\item If $K$ is tamely ramified then  $\sh(K) \in \mathscr{S}_{II}$; i.e. the shape of Type II fields are \textbf{ordinary} lattices as in \cite{terr}. 
	\end{itemize}
		
\end{customthm}

When $p=3$ we are studying the shape of certain rank 2 lattices which are represented by points in the upper half plane modulo the action of $\gl_2(\ZZ)$, see \cref{rank2shapes} for a visualization of this space. Define two subspaces of $\mathcal{S}_{2}$ by $\mathscr{S}_I(3) = \{ z\in \mathcal{S}_2: \Re(z) = 0\}$ and $\mathscr{S}_{II}(3) = \{z\in \mathcal{S}_2 : \Re(z) = \frac{1}{3}\}$. A result of Harron then shows

\begin{customthm}{a}[\cite{purecubics}, Space of shapes]\label{harronpurecubicspace}
	The shape of a pure cubic number field lies on one of two vertical geodesics in $\mathcal{S}_2$:
	\begin{itemize}
		\item If $K$ is of Type I then the shape of $K$ lies on $\mathscr{S}_I(3)$; i.e. the shapes of Type I fields are rectangular. 
		\item  If $K$ is of Type II then the shape of $K$ lies on $\mathscr{S}_{II}(3)$; i.e. the shapes of Type II fields are parallelograms with no extra symmetry, or \textbf{ordinary} lattices.
	\end{itemize}
		
\end{customthm}

To compare Harron's result with the generalization we note that $\mathscr{S}_I(3)$ can be described as the orbit of $y=i$ under the action of the torus:
	\[	\mathcal{T} = \begin{pmatrix} t & 0 \\ 0 & t^{-1} \end{pmatrix}.	\]

Our second result concerns the invariance of shapes and shows that the shape is a complete invariant within the family of all pure, prime degree, number fields. 

\begin{customthm}{B}[Invariance of shape]\label{invariance}
	Given $K$ and $L$, two pure prime degree number fields, we have that $K\cong L$ if and only if $\sh(K) = \sh(L)$. 
\end{customthm}

\begin{remark} 
Harron proved this in the case of pure cubic fields however he also showed that more is true in this case: combining the results of \cite{purecubics} and \cite{complexcubics}, he showed that the shape is a complete invariant within the family of all complex cubic fields. Our generalization shows that the complete invariance of shape holds within the family of pure prime degree fields but this is just one step towards a potentially stronger statement which is the subject of future research.
\end{remark}

The third statement of the paper concerns the distribution of shapes: in particular we prove that the shapes are equidistributed (in a regularized sense) along $\mathscr{S}_I$ (resp. $\mathscr{S}_{II}$) with respect to the measure $\mu_I$ (resp. $\mu_{II}$).

\begin{customthm}{C}\label{equidistribution}
	For $q$ prime, let $\delta_q(\mathcal{L}(Y))$ be a $q$-adic density, and define constants:
	\begin{align*} 
		C_{wild}  	&	= \frac{\pm1}{(2p-1)2^{p-2}p^{\ell-1}h_p^-}\prod_{q} \delta_q(\mathcal{L}_n(Y))	\\
		C_{tame} 	&	= \frac{\pm (2p-2)}{(2p-1)2^{p-2}p^{\ell-1}h_p^-}\prod_{q} \delta_q(\mathcal{L}_n(Y)).	\\
	\end{align*}
	Then we have the following:
	\[	\lim_{X\rightarrow \infty} \frac{N_{wild}(X, W)}{X^{1/{p-1}}\log(X)^{\ell-1}} = C_{wild} \mu_{I}(W)	\]
	and 
	\[	\lim_{X\rightarrow \infty} \frac{N_{tame}(X, W)}{X^{1/{p-1}}\log(X)^{\ell-1}} = C_{tame}\mu_{II}(W)	\]
	
	\noindent
	Where $N_{wild}(X,W)$ (resp. $N_{tame}(X,W)$) denote the number of wildly ramified (resp. tamely ramified) pure prime degree number fields with discriminant bounded by $X$ and shape in $W$.  
\end{customthm}

This shows that the shapes of Type I (resp.\;Type II) fields are equidistributed, in a regularized sense, along $\mathscr{S}_I$ (resp. $\mathscr{S}_{II}$). This extends the main result of Harron's pure cubic paper:

\begin{customthm}{c}[\cite{purecubics}, regularized equidistribution]\label{purecubicequidistribution}
	Define constants:
	\[	C_I = \frac{2C\sqrt{3}}{15} \text{ and } C_{II} = \frac{C\sqrt{3}}{10}	\]
	where 
	\[	C = \prod_q \left(1-\frac{3}{q^2} + \frac{2}{q^3} \right)	\]
	and the product is over all primes $q$. For $? = I$, resp. II, and real numbers $1\leq R_1 < R_2$, let $[R_1, R_2)_?$ denote the ``interval" $i[R_1, R_2)$, resp. $(1 + i[R_1, R_2))/3$ in $\mathscr{S}_?$. Then, for all $R_1, R_2$
	\[	\lim_{X\rightarrow \infty} \frac{\#\{ K \text{ of type } ?: |\Delta(K)|\leq X, \sh(K) \in [R_1, R_2)_?\}}{C_? \sqrt{X}} = \int_{[R_1, R_2)_?} d\mu_? \]
	where $\Delta(K)$ is the discriminant of $K$, and $\sh(K)$ is the shape of $K$.
	
\end{customthm}

\begin{remark}
The regularized equidistribution statement in \cref{equidistribution} is the higher dimensional analogue of \cref{purecubicequidistribution}. The usual statement of equidistribution would have, as a denominator, the number of pure prime degree fields with absolute discriminant bound but Benli, in \cite{purefieldasymptotics}, shows that this quantity has more $\log$ terms and would therefore send the limit to $0$. The reader can compare this result to those in \cite{purecubics, galoisquartic, multiquad}: in each there is at least one parameter in the space of shapes that is unrestricted and we can think of these as having infinite length. In \cite{purecubics} this is seen clearly as the geodesics that the shapes lie on are vertical. In this paper we sample shapes in some compact subset of the space which yields logarithmic terms containing shape conditions and not the discriminant bound. Incorporating the shape conditions into the asymptotics error term would be an interesting avenue to explore as it seems that the field asymptotic can be recovered in at least some cases: we discuss this in a bit more detail in \cref{asymptoticintro}. 
\end{remark}

Theorems \ref{shapespace}, \ref{invariance} and \ref{equidistribution} extend the results of \cite{purecubics} to all pure prime degree number fields.
Our final result allows us to interpret this study in terms of Galois conditions and resolvent fields:

\begin{customthm}{D}\label{theoremgalois}
	$K$ is a pure prime degree number field if, and only if, $\Gal(\tilde{K}/\QQ)\iso F_p = C_p\rtimes C_{p-1}$ and $K$ has (degree $p-1$) cyclotomic resolvent field $\QQ(\zeta_p)$. 
\end{customthm}

This result is widely known in the cubic case where the statement is that pure cubics are exactly those ($S_3\iso F_3$) cubic fields whose quadratic resolvent is $\QQ(\zeta_3)$: for a proof in the cubic case see Lemma 33 of \cite{manjulari}. We prove the generalization which not only allows us to phrase things in a manner more similar to the work being done in arithmetic statistics, but also motivates other questions which the author plans to answer in future work, see \cref{frobeniussection} for more on this. 

In the remaining sections of the introduction we attempt to motivate our results by describing how they fit into the recent work in this area.

\subsection{The strength of invariance}

It is well known that the discriminant is a complete invariant of a number field of degree $d$ if, and only if, $d=2$. As the shape is closely related to the discriminant it is natural to ask whether the extra information it contains makes it a stronger invariant or not. 

Terr, in \cite{terr}, proved that cyclic cubic fields all have the same (hexagonal) shape! It seems that the extra symmetries of the field, which are inherited by the shape, can cause too much collision for the shape to be a strong invariant so what else do we know?

\begin{remark}
The study of shapes as invariants has been addressed in a few recent papers and continues to be an active area of study. We compiled a short list of some recent results in this direction. 

\begin{itemize}
	\item Guillermo Mantila-Soler and Marina Monsurr\`o, in \cite{zmodlz}, study the shape of cyclic number fields of prime degree and show that the shape is no more powerful an invariant than the discriminant. 

 	\item  William Bola\~{n}os and Guillermo Mantila-Soler, in \cite{zmodnz}, extend the work above, \cite{zmodlz}, to study the shape of cyclic degree $n$ number fields and show that, again, the shape of $K$ gives you nothing more than the discriminant of the number field. As such, the shape (or trace form in their work), is far from a complete invariant of $K$ in the case of cyclic number fields. 
 
	 \item In contrast to the aforementioned work, Rob Harron has shown, in \cite{purecubics} and \cite{complexcubics}, that the shape is a complete invariant of pure cubic fields and, more generally, of complex cubic fields (just as the discriminant is in the case of quadratic fields). This shows that, given two such number fields $K_1$ and $K_2$, $K_1\iso K_2 \iff \sh(K_1) = \sh(K_2)$. Where we say $\sh(K)$ to mean the shape of the field $K$. 
	 
	 \item Piper H and Rob Harron, in \cite{galoisquartic}, show that if we restrict to certain families of $V_4$ quartics, that the shape again determines the field.
	 
\end{itemize}
\end{remark}
	 
	 These results show that the shape can be a very powerful invariant in some cases, and a very weak invariant in others; we see that Galois fields, and specifically those with cyclic Galois group, with extra symmetries may force shapes to collide but in general much remains to completely classify the shape as an invariant. Theorem \ref{invariance} provides, for each prime $p$, an infinite family of number fields for which the shape is a complete invariant. 

One reason we care about this statement and the power of invariance is towards relating the study of shapes to the study of number field asymptotics. This is discussed in more detail in \ref{asymptoticintro} but the quick idea is that by studying the distribution of shapes in prescribed families we can sometimes obtain the corresponding number fields asymptotics, or at least some information about the $\log$ terms in the asymptotics.

\subsection{Distribution of shapes}

As mentioned above Terr studied the question of non-generic distribution in the case of cyclic cubic number fields, where all fields have the same shape. This result is argued geometrically using the fact that there is only one point in the space of rank 2 lattices which admits an order 3 automorphism. Terr's results show how Galois conditions impose strong restrictions on the shape of the field and where said shape lies: in one case (the generic, $S_3$, case) the shapes distribute randomly in the entire space of shapes of rank 2 lattices while the other case (the non-generic, $C_3$, case) yields a single point in the space of shapes. This should be somewhat intuitive based on the symmetries that the lattice inherits from the field but how this changes, when the space of shapes has higher dimension and the families that we consider are not cyclic, continues to be an intriguing question.

\begin{remark} Here are a few of the recent studies of shape distributions.
\noindent
\begin{itemize}
	\item Rob Harron studied complex cubics, in \cite{purecubics} and \cite{complexcubics}, and showed that the shapes of these cubics lie, and are equidistributed, on geodesics defined by the quadratic resolvent of the cubic field. In the case of pure cubic fields the geodesics that the shapes lie on are vertical and have infinite length, whereas the non-pure complex cubics lie on geodesics of finite length. This witnesses the difference in the asymptotics of cubic fields with prescribed quadratic resolvents as is discussed in the following section. 
	\item Piper H and Rob Harron study the shapes of Galois quartic extensions, in \cite{galoisquartic}, showing that the shape of $V_4$ quartics are equidistributed along subspaces of the space of shapes, and that cyclic quartic fields are not. Specifically the cyclic quartic shapes distribute discretely along subspaces of the space of shapes and are therefore not equidistributed in any sense. The latter result can be compared with Terr's cyclic cubic result, though the higher dimensional space of shapes give more options for lattices with cyclic symmetry. 
	\item Jamal Hassan, in \cite{multiquad}, extended the $V_4$ result above by studying the shapes of octic multiquadratic extensions and showing that the shapes are equidistributed along subspaces of the space of shapes. 
\end{itemize}
\end{remark}

Our current work extends the results in \cite{purecubics} to all pure prime degree number fields (i.e. fields of the form $K = \QQ(\sqrt[p]{m}$)). Specifically, \cref{equidistribution} shows that the shapes of these fields equidistribute (in a regularized sense) along one of two subspaces of the space of (rank $p-1$) shapes and which space they lie on depends on the ramification of $p$ in the field.

With the goal of showing that the shapes of fields in a given family are equidistributed we need to first determine the shapes as we vary over the fields in our family. Once the shapes have been determined we will create a parametrization, or bijection, between the shapes and certain integer points in some bounded subset of a real vector space. The idea is then to approximate the number of integer points in the region using the principle of Lipschitz and sieve methods from analytic number theory. Though some of the methods of this paper coincide with the ones used in \cite{purecubics} we will ultimately end up with a new proof of Harron's pure cubic results while also extending them to all pure prime degree number fields.

\subsection{A Galois theoretic interpretation of pure fields and number field asymptotics}\label{asymptoticintro}

This final section provides a bit of an aside to the current project but will allow us to phrase the results in a manner that may be more familiar to anyone studying number field asymptotics. We have two major motivations in the study of shapes which stem from Malle's conjecture: the first is that it seems number field asymptotics can often be obtained from shape studies, and the second in relation to this is that the shape seems to explain $\log$ terms in certain cases. We hope to motivate both phenomena in this section.  

\begin{remark}
We first talk about the shapes connection to $\log$ terms in Malle's conjecture.
\begin{itemize}
	\item In \cite{purecubics, complexcubics} Harron observed this phenomena. In these cases the shapes are 2-dimensional lattices which, after appropriate scaling/rotation, can be viewed as points in the upper half plane. These points lie on geodesics in $\mathcal{H}$ determined by the fields trace zero form and these geodesics have finite length\footnote{with respect to the hyperbolic measure on $\mathcal{H}$} for non-pure complex cubic fields. This difference coincides with the asymptotics of cubic fields with fixed quadratic resolvent: e.g. let $N_3(X, F_2)$ be the number of cubic fields with quadratic resolvent $F_2$ and absolute discriminant bounded by $X$. The following result was proved independently by Henri Cohen and Anna Morra, in \cite{CohenMorra}, and Manjul Bhargava and Ari Shnidman, in \cite{manjulari}:
	\begin{align}\label{cubicresolvent}
		N_3\left(X, \QQ\left(\sqrt{d}\right)\right) \sim \begin{cases} X^{1/2} & d\not= -3	\\	X^{1/2}\log(X) & d = -3	\end{cases}	
	\end{align}
When $d=-3$ the quadratic field contains the cube roots of unity and the corresponding cubic field is pure. So, we see $\log$ terms in the asymptotics when the geodesics have infinite measure. 

	\item In \cite{galoisquartic} Piper H and Rob Harron show a similar phenomena happens in the case of $V_4$ quartic fields. If $N_4(X, V_4)$ is the number of $V_4$-quartic fields with discriminant bounded by $X$ then $N_4(X, V_4) \sim X^{1/2}\log(X)$. The authors mention that if the shape of such fields lie in a `box', of side length $R$, which constrains the (two) shape parameters then the number of fields with discrimnant bounded by $X$ and shape in this box grows like $X^{1/2}\log^2(R)$. 
	
	\item In forthcoming work the author and Rob Harron, \cite{harronholmes}, study the shape of non-Galois sextic fields (i.e. those sextic fields with absolute Galois group $C_3\wr C_2$) with a fixed quadratic subfield. Jurgen Kl\"uners, in \cite{Kluners}, showed that these fields witness the failure of Malle's conjecture which predicts the correct power of $X$ in the asymptotics but not a $\log$ term that appears when you count those sextic fields with quadratic subfield $F_2=\QQ(\sqrt{-3})$\footnote{Again, this is the cyclotomic field $\QQ(\zeta_3)$.}. In fact more is true: if you count those non-Galois sextic fields which don't have this cyclotomic subfield then the count does adhere to Malle's conjecture. In two cases we study those fields with cyclotomic subfield, and (to make things explicit) those whose subfield has class number 1, to show that the `shape' of these fields all lie on one dimensional subspaces of the space of shapes. Furthermore, the subspaces on which they lie have infinite measure when the quadratic subfield is $\QQ(\zeta_3)$, and finite measure otherwise. Comparing this with the above studies we see logarithmic terms appearing in the first case and not in the second. 
\end{itemize}
In all cases we observe a relationship between the space in which the shapes lie and the asymptotics (specifically the $\log$ terms in the asymptotics) of the fields. 

\end{remark}

The works in the remark above motivated this section and that in the final section of that paper. We wanted to see if the same phenomena, as in \cite{purecubics}, occurs in this generalization. That required a Galois interpretation, a fixed resolvent field and the asymptotics for fields with both the given Galois conditions and the fixed resolvent: theorem \ref{theoremgalois} provides most of this by proving that the pure prime degree fields are exactly those degree $p$ number fields whose Galois group\footnote{Here we mean the Galois group of the Galois closure} is isomorphic to the Frobenius group, and whose (unique) degree $(p-1)$ resolvent field is isomorphic to $\QQ(\zeta_p)$. We define all of this precisely in the final section but for now we note that $F_p$ is called the Frobienius group and is defined, for our purposes, as the semi direct product:
	\[	F_p = \left(\ZZ/p\ZZ\right)\rtimes \left(\ZZ/p\ZZ\right)^\times	\]
and this provides a Galois theoretic interpretation of pure prime degree fields. 

Now, for the asymptotics of such fields: K\"{u}bra Benli, in \cite{purefieldasymptotics}, proves that the number of pure prime degree fields with discriminant bounded by $X$ grows like $X^{1/p} Q_p(\log(X))$ where $Q_p(x)$ is a polynomial of degree $p-2$. Using \cref{theoremgalois} this gives asymptotic results for the number of (degree $p$) Frobenius fields with fixed cyclotomic resolvent field. And with that we can phrase our results, as was done in \cite{galoisquartic}, as follows: the number of fields with discriminant bounded by $X$, and shape in a `hypercube' of side length $R$ which constrains the ($\ell$) shape parameters, grows like $X^{1/p}\log(X)^{\ell-1} H(\log(R))$ where $H(x)$ is a homogeneous polynomial of degree $\ell$. When $p=3$ it is easy to incorporate the shape conditions into the error and recover the asymptotic in \ref{cubicresolvent}. This completes a part of the generalization we are after: the remaining part, and subject of another project, is to repeat this study with Frobenius fields with different fixed resolvent fields.

The main takeaway from this is that we can phrase our study in terms of Galois groups and resolvent fields rather than simply as pure extensions of $\QQ$. This aligns the study of shapes and their distributions a little more closely with work in the direction of Malle's conjecture which has, in general, motivated a number of the authors projects. In general there is much to do to relate shape studies to number field asymptotics but, as is stated in \cite{complexcubics}, it would be interesting to have heuristics for the field counting functions that incorporate shape conditions.

\tableofcontents


\section{The shape of a number field}

The shape of a number field is defined to be the equivalence class of the lattice $j(\mathcal{O}_K^\perp)$ up to scaling, rotation, and reflection. To define this precisely we need to know what the shape of an arbitrary lattice is, and then to specify how we obtain the lattice $j(\mathcal{O}_K^\perp)$. 

\subsection{The shape of a lattice}
We let $\Lambda$ be a rank $r$ lattice. The \emph{shape} of $\Lambda$ is the equivalence class of this lattice up to scaling, rotation, and reflection. Often in the literature the space of shapes of rank $r$ lattices is presented as the double coset space:
	\[	S_r = \gl_r(\ZZ)\backslash \gl_r(\RR) / \GO_r(\RR)	\]
where $\gl$ denotes the general linear group and $\GO$ the group of orthogonal matrices\footnote{i.e. those matrices $M$ such that $M\cdot M^T = I$.}. We take an alternative approach, as in \cite{galoisquartic}, which will instead define the space of shapes in terms of Gram matrices. Letting $\mathcal{G}$ denote the set of positive definite real symmetric $(r\times r)$-matrices we have that the space of shapes of rank $r$ lattices is also represented as
	\[	S_r = \gl_r(\ZZ)\backslash \mathcal{G}/\RR^\times.	\]
Given a lattice $\Lambda$ in an inner product space $V$ and with basis $\{b_1, \hdots, b_r\}$ we form the Gram matrix of $\lambda$, denoted $\Gr(\Lambda)$, by taking innerproducts of the basis:
	\[	\Gr(\Lambda) = (\langle b_i, b_j\rangle)_{1\leq i,j\leq r}.	\]
This is an element of $\mathcal{G}$ and uniquely identifies the lattice up to change of basis. For $g\in \gl_r(\ZZ)$ and $G\in \mathcal{G}$ we define the left action of $\gl_r(\ZZ)\backslash \mathcal{G}$ as: 
	\[	g\cdot G := gGg^T	\]
which comes from the natural action on the set of Gram matrices. For $r\in \RR^\times$ and $G\in \mathcal{G}$ we define the right action of $\mathcal{G}/\RR^\times$ as:
	\[	G\cdot r := r^2 G	\]
which, again, comes from the natural action of scaling the basis vectors of $\Lambda$ and computing $\Gr(\Lambda)$. When we refer to the shape of a lattice we will often be referring to an explicit Gram matrix whose calculation we describe in the next section. For further details we refer the reader to \cite{galoisquartic}.

\subsection{The lattice $j(\mathcal{O}_K^\perp)$}
Let $K$ be a number field of degree $n$, $\OK$ be it's ring of integers with basis $\{1, \alpha_1, \hdots, \alpha_{n-1}\}$, and $\{\sigma_i\}_{1\leq i \leq n}$ be the set of embeddings of $K$ into $\CC$. Then we have the Minkowski embedding:
	\begin{align*}
		j: K		&	 \rightarrow \CC^n	\\
		\alpha	&	\mapsto (\sigma_1(\alpha), \hdots, \sigma_n(\alpha)).		
	\end{align*}
The $\RR$-span of the image of this map is an inner-product space, often denoted by $K_\RR \cong \RR^n$, and the restriction of $j$ to $\OK$ yields a rank $n$ lattice in $K_\RR$. Though there is some temptation to define the shape of $K$ to be the shape of this lattice there are issues when investigating the distribution of shapes as we vary over some family of fields: namely, as all such lattices can be made to have the common vector $j(1)$, we lose the potential ``randomness'' of shape.  As such, we define the shape of $K$ to be the shape of the lattice obtained by projecting $j(\OK)$ onto the orthogonal complement of $j(1)$: specifically we define the perp map
	\begin{align*}
		\alpha^\perp := n\alpha - \tr(\alpha)
	\end{align*}
and let $\OK^\perp$ be the image of $\OK$ under this map. Note that the elements of $\OK^\perp$ are elements of trace-zero and map, under the Minkowski map, to vectors orthogonal to $j(1)$. Using this we obtain a lattice, $j(\OK^\perp)$, of rank $n-1$ and we define the shape of $K$ to be the shape of this lattice. 
Given a basis of $\OK$, $\{1, \alpha_1, \hdots, \alpha_{n-1}\}$, we obtain a basis of $\OK^\perp$, $\{\alpha_1^\perp, \hdots, \alpha_{n-1}^\perp\}$, and we obtain a representative of the shape by computing the Gram matrix:
	\[	\Gr(j(\OK^\perp)) = (\langle j(\alpha_i^\perp), j(\alpha_j^\perp)\rangle)	.	\]


\section{Shapes of pure prime degree number fields}
One will notice, in the studies mentioned in the introduction, that the shape of $K$ often depends on the ramification of $p$ in $K$. More specifically the subspace on which the shape of $K$ lies is determined by whether $p$ is wildly ramified in $K$ or not: this is referred to as a wild versus tame dichotomy in \cite{purecubics, galoisquartic}. The same phenomenon occurs in this study and so, after we determine a sufficiently nice basis for $\OK$, we will split this section up according to whether $p$ is wildly ramified in $K$, or not: our notation for such $K$ will be $K_{\textup{wild}}$ and $K_{\textup{tame}}$ respectively, unless it is otherwise clear from context. 

We assume throughout that $p$ is a prime number greater than $2$; this will simplify the statements of lemmas \ref{puredisc} and \ref{purepbasis} but, as all quadratic fields have the same shape, we will not be losing anything with this simplification.

\subsection{ The integral basis of $K/\QQ$ }

Let $K = \QQ(\alpha)$ where $\alpha$ is the unique real root\footnote{This gives us that $K\subseteq \RR$ which will simplify notation a bit in what follows.} of the irreducible polynomial $x^p-m$ belonging to $\ZZ[x]$ and $m= \prod_{i=1}^{p-1} a_i^i$ where the $a_i$ are squarefree and pairwise relatively prime. Throughout this paper we will refer to $K$, defined in this way, as a {\bf pure prime degree number field}.
The following two lemmas, from \cite{purepbasisnew}, provide us with the discriminant and an integral basis for $K/\QQ$ respectively:

\begin{lemma}[\cite{purepbasisnew}]\label{puredisc}
	Let $K=\QQ(\alpha)$ be a pure prime degree number field, as above, and let $q$ run over all distinct primes dividing $m$. We have the following:
	\begin{enumerate}
		\item[(i)] When $p\mid m$, or $p\nmid m$ and $p^2\nmid (m^p-m)$, the discriminant of $K$ is given by:
			\[ \Delta_K = \left(-1\right)^{\frac{(p-1)}{2}} p^p \prod_{q| m} q^{p-1}. \]
		\item[(ii)]  If $p^2\mid (m^{p-1}-1)$, then the discriminant of $K$ is given by:
			\[ \Delta_K = \left(-1\right)^{\frac{(p-1)}{2}} p^{p-2} \prod_{q|m} q^{p-1}.\]
	\end{enumerate}
\end{lemma}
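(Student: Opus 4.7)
The plan is to derive everything from the identity
\[
\operatorname{disc}(f_\alpha) \;=\; [\mathcal{O}_K : \ZZ[\alpha]]^2 \cdot \Delta_K,
\]
where $f_\alpha(x) = x^p - m$ is the minimal polynomial of $\alpha$. First I would compute the polynomial discriminant directly. For $f(x) = x^p - m$ one has $f'(\alpha) = p\alpha^{p-1}$, and a standard resultant calculation (or the general formula for $x^n - a$) gives
\[
\operatorname{disc}(x^p - m) \;=\; (-1)^{p(p-1)/2}\, p^p\, m^{p-1} \;=\; (-1)^{(p-1)/2}\, p^p\, m^{p-1},
\]
since $p$ is odd and $p-1$ is even. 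So the whole proof reduces to computing the index $[\mathcal{O}_K:\ZZ[\alpha]]$.

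Next I would handle the contributions from primes $q \mid m$. Writing $m = \prod_{i=1}^{p-1} a_i^i$, fix $q \in a_j$ so that $q^j \| m$. For $k = 1,\dots,p-1$, set
\[
\gamma_{k,q} \;=\; \frac{\alpha^k}{q^{\lfloor jk/p \rfloor}}.
\]
Then $\gamma_{k,q}^p = m^k/q^{p\lfloor jk/p\rfloor}$ has nonnegative $q$-adic valuation, so $\gamma_{k,q} \in \mathcal{O}_K$. Because the residues $jk \bmod p$ for $k=1,\dots,p-1$ form a permutation of $\{1,\dots,p-1\}$, one obtains the classical identity $\sum_{k=1}^{p-1} \lfloor jk/p\rfloor = (p-1)(j-1)/2$, so the local contribution at $q$ to the index is $q^{(p-1)(j-1)/2}$. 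Combining over all prime divisors of the $a_i$ gives an overall contribution of $\prod_{i} a_i^{(p-1)(i-1)/2}$ to the index, hence $\prod_i a_i^{(p-1)(i-1)} = m^{p-1}/\operatorname{rad}(m)^{p-1}$ to its square. Dividing the polynomial discriminant by this factor leaves exactly $(-1)^{(p-1)/2}\, p^p \prod_{q\mid m} q^{p-1}$, which is the claimed formula in case (i), and in particular handles the case $p \mid m$ (where $p$ is just one of the $a_i$).

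Finally I would analyze what happens at $p$ when $p \nmid m$, which is purely a local question at $p$. Working in $\mathcal{O}_K \otimes \ZZ_p$, I would look for an integer of the form
\[
\beta \;=\; \frac{1}{p}\sum_{k=0}^{p-1} c_k \alpha^k, \qquad c_k \in \ZZ_p,
\]
that is not already in $\ZZ_p[\alpha]$. The condition that $\beta^p \in \ZZ_p[\alpha]$, expanded via $\alpha^p = m$, translates into a congruence on the tuple $(c_k)$ modulo powers of $p$. Using $\alpha^p \equiv m \pmod{p^2}$ together with $m^p \equiv m \pmod p$ (Fermat) and elementary $p$-adic expansion, this congruence has a nontrivial solution precisely when $m^{p-1} \equiv 1 \pmod{p^2}$, and when it does have a solution the enlarged lattice picks up exactly one extra factor of $p$, multiplying the index by $p$ and dividing $\Delta_K$ by $p^2$. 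This gives the $p^{p-2}$ in case (ii); otherwise we stay in case (i). The key technical point — and the main obstacle — is to show that the condition $p^2 \mid m^{p-1} - 1$ is both necessary and sufficient for the existence of this element, and that no further enlargement of the order at $p$ is possible, so that the index at $p$ is either $1$ or $p$ and nothing else. This amounts to a clean local calculation at $p$ in $\mathcal{O}_K \otimes \ZZ_p$, comparing the valuation of $\alpha - r$ (for $r \equiv \alpha \pmod p$) to $1$ versus $2$, which is exactly the Fermat-quotient dichotomy above.
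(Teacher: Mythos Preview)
The paper does not actually prove this lemma; it is quoted verbatim from \cite{purepbasisnew} (Jakhar--Sangwa), so there is no in-paper argument to compare against. Your sketch is the standard route to this result and is correct in outline, but two points deserve flagging.

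At primes $q\mid m$ with $q\neq p$ you only exhibit integral elements $\gamma_{k,q}$ and then assert that the local index \emph{equals} $q^{(p-1)(j-1)/2}$. What you have written shows it is at least that; to get equality you should note that the Newton polygon of $x^p-m$ at $q$ is a single segment of slope $j/p$, so $q$ is totally ramified and $v_q(\gamma_{k,q})=(jk\bmod p)/p$, whence $\{\gamma_{k,q}\}_{k=0}^{p-1}$ already has the valuations $0,1/p,\dots,(p-1)/p$ and is therefore a local integral basis. This is routine but should be said.

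The genuine gap is the one you yourself identify: the $p$-local computation when $p\nmid m$. Your description ``comparing the valuation of $\alpha-r$ to $1$ versus $2$'' is imprecise (the relevant uniformizer is $\alpha-m$, and one is comparing $v_p\big((\alpha-m)^p\big)$ to $p$ versus $p+1$, which unwinds to $p^2\mid m^p-m$), and you have not argued that the index at $p$ cannot exceed $p$. Both follow from Dedekind's criterion applied to $x^p-m\equiv(x-m)^p\pmod p$, or from Ore's theorem / the Montes approach used in \cite{purepbasisnew}; without one of these the dichotomy (i)/(ii) is asserted rather than proved.
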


As $p$ divides the discriminant of $K$ we know that $p$ is ramified in both cases: when $p^p\mid \Delta(K)$ we say $p$ is wildly ramified in $K$. Otherwise, we say that $p$ is tamely ramified in $K$. Using this we have that the fields in (i) are wild, and the fields in (ii) are tame. The next lemma provides an integral basis in each case:

\begin{lemma}[\cite{purepbasisnew}]\label{purepbasis} Let $K, \alpha, m$ be defined as above and, for $j\in\{1, \hdots, p-1\}$, let $\gamma_j = \alpha^j/\prod_{i=1}^{p-1} a_i^{\lfloor \frac{ij}{p}\rfloor}$. Then we have:
	\begin{enumerate}
		\item[(i)] If $p^2\nmid (m^{p-1}-1)$, then the set $\{ 1, \gamma_1, \gamma_2, \hdots, \gamma_{p-1}\}$ is an integral basis of $K_{\textup{wild}}$.
		\item[(ii)] If $p^2\mid (m^{p-1}-1)$, then the set $\{ \gamma_0, \gamma_1, \gamma_2, \hdots, \gamma_{p-1}\}$ is an integral basis of $K_{\textup{tame}}$ where $\gamma_0 = \frac{1}{p}\sum_{i=0}^{p-1}(\epsilon\alpha)^i$ where $\epsilon$ is any integer so that $m\epsilon \equiv 1\mod{p^2}$.
	\end{enumerate}
\end{lemma}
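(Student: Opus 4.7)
My plan is to prove both parts by (a) verifying that each proposed generator is an algebraic integer and (b) computing the discriminant of the lattice they span and matching it against $\Delta_K$ from Lemma \ref{puredisc}; since a full-rank submodule of $\OK$ with the same discriminant as $\OK$ must equal $\OK$, these two checks suffice.

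For $j\geq 1$, setting $d_j = \prod_{i=1}^{p-1} a_i^{\lfloor ij/p\rfloor}$, the calculation
\[
  \gamma_j^p \;=\; \frac{m^j}{d_j^p} \;=\; \prod_{i=1}^{p-1} a_i^{\,ij - p\lfloor ij/p\rfloor} \;\in\; \ZZ_{>0}
\]
shows $\gamma_j$ is a root of $x^p - \gamma_j^p \in \ZZ[x]$, hence integral. For the discriminant I would start from $\disc(x^p-m) = (-1)^{(p-1)/2}p^pm^{p-1}$ and divide by $\bigl(\prod_{j=1}^{p-1}d_j\bigr)^2$. The classical identity $\sum_{j=1}^{p-1}\lfloor ij/p\rfloor = (i-1)(p-1)/2$ (valid since $\gcd(i,p)=1$) collapses $\prod_j d_j$ to $\prod_i a_i^{(i-1)(p-1)/2}$, producing
\[
  \disc(1,\gamma_1,\ldots,\gamma_{p-1}) \;=\; (-1)^{(p-1)/2}\,p^p\prod_{q\mid m} q^{p-1},
\]
which matches $\Delta_K$ exactly in case (i).

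In case (ii), the true $\Delta_K$ is smaller by a factor of $p^2$, so I must adjoin an integral element whose denominator is $p$. The proposed $\gamma_0 = \frac{1}{p}\sum_{i=0}^{p-1}(\epsilon\alpha)^i$ is the natural candidate, and the key computation is
\[
  \gamma_0(\epsilon\alpha - 1) \;=\; \frac{(\epsilon\alpha)^p - 1}{p} \;=\; \frac{\epsilon^p m - 1}{p}.
\]
The two hypotheses $m\epsilon \equiv 1 \pmod{p^2}$ and $p^2 \mid m^{p-1}-1$ combine to give $\epsilon^{p-1} \equiv 1 \pmod{p^2}$, hence $\epsilon^p m \equiv \epsilon m \equiv 1 \pmod{p^2}$, so the right-hand side lies in $p\ZZ$. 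From this one extracts integrality of $\gamma_0$ itself by computing its minimal polynomial, i.e.\ the product $\prod_\sigma(X-\sigma(\gamma_0))$ over the Galois closure, and tracking $p$-adic valuations to confirm all coefficients lie in $\ZZ$.

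The main obstacle will be precisely this last step: upgrading $\gamma_0(\epsilon\alpha-1)\in p\ZZ$ to a monic integer equation for $\gamma_0$ itself. This is the only point in the argument where the congruence $p^2\mid m^{p-1}-1$ is genuinely needed; the floor-sum identity and the standard discriminant of $x^p-m$ handle everything else. Once integrality is in hand, the same discriminant calculation as in case (i), corrected by the factor $(1/p)^2$ coming from replacing $1$ by $\gamma_0$, yields $\disc(\gamma_0,\gamma_1,\ldots,\gamma_{p-1}) = (-1)^{(p-1)/2}p^{p-2}\prod_{q\mid m}q^{p-1} = \Delta_K$, completing (ii).
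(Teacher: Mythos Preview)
The paper does not prove this lemma at all: it is quoted verbatim from Jakhar--Sangwan \cite{purepbasisnew} and used as a black box. So there is no ``paper's own proof'' to compare against; I can only comment on the soundness of your outline.

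Your plan is the standard one and is correct. The integrality of the $\gamma_j$ for $j\geq 1$ and the discriminant match in case (i) go through exactly as you say; the floor-sum identity $\sum_{j=1}^{p-1}\lfloor ij/p\rfloor=(i-1)(p-1)/2$ is the right tool, and the index computation collapses cleanly to $\prod_i a_i^{p-1}$. In case (ii) your change-of-basis determinant is indeed $1/p$ (the coefficient of $1$ when $\gamma_0$ is expressed in the $\{1,\gamma_1,\ldots,\gamma_{p-1}\}$ basis), so the discriminant drops by $p^2$ as required.

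On the one point you flag as an obstacle: your identity $\gamma_0(\epsilon\alpha-1)=(\epsilon^p m-1)/p\in p\ZZ$ is correct and is in fact already enough, once combined with a localization argument. Away from $p$ integrality is trivial since $\gamma_0\in\frac{1}{p}\ZZ[\alpha]$. At $p$, note that $p\nmid m$ in case (ii), so $\epsilon\alpha-1$ is a uniformizer in the completion $K_{\mf{p}}$ (its $p$th power has valuation $v_p(\epsilon^p m-1)\ge 2$ minus a unit contribution, and the extension is totally ramified of degree dividing $p-1$); dividing $pc$ by a uniformizer therefore keeps you in the valuation ring. Alternatively, you can avoid local arguments entirely: compute $\mathrm{Tr}_{K/\QQ}(\gamma_0\gamma_j)$ for all $j$ directly from the power-basis trace formula $\mathrm{Tr}(\alpha^k)=0$ for $0<k<p$, check these are all integers, and conclude $\gamma_0\in\OK$ since the $\gamma_j$ already span a sublattice of index prime to $p$. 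Either route closes the gap without needing the full minimal polynomial of $\gamma_0$.
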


To provide a familiar example and to highlight a bit of the notation to follow we remind ourselves that:
\begin{example}\label{purecubicbasis}
When $p=3$ and $K=\QQ(\sqrt[3]{m})$ we write $m = a_1a_2^2$, $\alpha = \sqrt[3]{m}$ and $\beta = \sqrt[3]{a_1^2a_2}$. If $3^2 \mid m^2 - 1$ then $m\equiv \pm 1 \mod{9}$ and $\epsilon$ is $\pm 1$. The integral basis of $K$ is  then given by:
\[
	 \begin{cases} 		\left\{ 1, \alpha, \frac{\alpha^2}{a_2}\right\} = \left\{ 1, \alpha,\beta\right\} & \text{ when } m \not\equiv \pm 1 \mod 9 \\ 
					\left\{ \frac{1\pm \alpha + \alpha^2}{3}, \alpha, \frac{\alpha^2}{a_2}\right\} = \left\{ \frac{1\pm \alpha + \alpha^2}{3}, \alpha, \beta\right\} & \text{ when } m\equiv \pm 1 \mod 9. \end{cases}
\]

Note that the congruence condition on $m$ is the same as the condition on ramification mentioned above. In \cite{purecubics} we see that the shapes of pure cubic fields lie on $2$ vertical geodesics in the space of rank $2$ lattices:  moreover the ramification at $3$ determines which vertical geodesic the shape lies on. We will see exactly what this all looks like below but those who are too excited to wait can look at figure $\ref{rank2shapes}$!
\end{example}

\subsection{ Slightly altered tame basis} We note that the integral basis of $K_{\textup{tame}}$ given in the previous section will not be particularly nice for our purposes: the shape of $K$ is given by mapping an integral basis into $\CC^n$ and projecting onto the orthogonal subspace spanned by $j(1)$ and because of this it is nice to write an integral basis which contains the number 1. To that end we choose our basis to be $\{1, \nu, \gamma_2, \hdots, \gamma_{p-1}\}$ where 	
	\[	\nu = \frac{m + \alpha + \epsilon\alpha^2 + \hdots + \epsilon^{p-2}\alpha^{p-1}}{p} \]
That this element, $\nu$, is integral comes from the fact that $\epsilon m\equiv 1 \mod{p^2}$. Let $k\in \ZZ$ be such that $\epsilon m = 1+ kp^2$. Then we have:
	\begin{align*}
		m\gamma_0 - kp\sum_{i=1}^{p-1}\alpha^i 	&	=	m\left(\frac{1+\epsilon\alpha + (\epsilon\alpha)^2 + \cdots + (\epsilon\alpha)^{p-1}}{p}\right) - kp\sum_{i=1}^{p-1}\alpha^i	\\
										&	=	\frac{m + \sum_{i=1}^{p-1}(\epsilon m - kp^2)\epsilon^{i-1}\alpha^i}{p}	\\
										&	=	\frac{m + \alpha + \epsilon\alpha^2 + \cdots + \epsilon^{p-2}\alpha^{p-1}}{p}	\\
										&	=	\nu.
	\end{align*}
	
Thus, $\nu\in \OK$ and the change of basis matrix from the rational basis $\{1, \gamma_1, \gamma_2, \hdots, \gamma_{p-1}\}$  to the basis $\{1, \nu, \gamma_2, \hdots, \gamma_{p-1}\}$ is:
	\[	
		\mathcal{C}_{t} = 
		\begin{pmatrix}
			1			&	0			&	0						&	\hdots	&	\hdots	&	0	\\
			\frac{m}{p}		&	\frac{1}{p}		&	\frac{\epsilon b_2}{p}			&	\hdots 	&	\hdots	&	\frac{\epsilon^{p-2}b_{p-1}}{p}	\\
			0			&	0			&	1						&	0		&	\hdots 	&	0	\\
			\vdots		&	\ddots		&	\ddots					&	\ddots	&	\ddots	&	\vdots	\\
			0			&	\hdots		&	\hdots					&	\hdots	&	0		&	1
			
		\end{pmatrix}
	\]
where $b_j = \prod_{i=1}^{p-1} a_i^{\lfloor\frac{ij}{p}\rfloor}$. The determinant of this matrix is $\frac{1}{p}$ which shows that the basis $\{1, \nu, \gamma_2, \hdots, \gamma_{p-1}\}$ has the correct discriminant\footnote{Square and compare to \cref{puredisc}.} and we can therefore conclude that it is indeed an integral basis; we use this as our basis of $K_{\textup{tame}}$ below. 

\subsection{ The shape of $K_{\textup{wild}}$} 

We let $\zeta_p = e^{2\pi i/p}$ and $\alpha$ be as above. We also let $\sigma$ denote the real embedding of $K$ and $\tau_k$ denote the complex embedding sending $\alpha$ to $\zeta_p^k\alpha$ for $k \in \{1, \hdots, \frac{(p-1)}{2}\}$. Finally we let $j$ be the embedding of $K$ into $\CC^p$ given by:
	\[ j(a) = (\sigma(a), \tau_1(a), \hdots, \tau_{(p-1)/2}(a), \overline{\tau}_{(p-1)/2}(a), \hdots, \overline{\tau_1}(a)).\] 
Note that we have changed the ordering of $j$ from above, this does not change anything but we feel this is a slightly more pleasant way of presenting it.
 
Using the above embedding, $j$, we have the following result about the Gram matrix of the basis for pure prime degree \textit{\textup{wild}} fields:
\begin{proposition}\label{wildgram}
	The Gram matrix of the basis $\{1, \gamma_1, \gamma_2, \hdots, \gamma_{p-1}\}$ is given by:
		\[ 	
			\mathcal{G}(K_{\textup{wild}}) = \begin{pmatrix}	
				p	&				&				&			&					\\
					&	p\gamma_1^2	&				&			&					\\
					&				&	p\gamma_2^2	&			&					\\
					&				&				&	\ddots	&					\\
					&				&				&			&	p\gamma_{p-1}^2	\\
			\end{pmatrix}
		\]

\end{proposition}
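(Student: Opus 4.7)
The plan is to compute the entries of the Gram matrix directly using the Minkowski embedding $j$ and the explicit form of the basis $\{1, \gamma_1, \ldots, \gamma_{p-1}\}$ where $\gamma_i = \alpha^i/b_i$ with $b_i \in \ZZ$.

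First I would observe that since each $\gamma_i$ is a real scalar multiple of $\alpha^i$, and $\alpha \in \RR$, we have $\sigma(\gamma_i) = \gamma_i$ and
\[
\tau_k(\gamma_i) = \frac{(\zeta_p^k \alpha)^i}{b_i} = \zeta_p^{ki}\,\gamma_i
\]
for each $k \in \{1,\ldots,\ell\}$. Setting $\gamma_0 = 1$ for notational convenience, this identity holds uniformly for $i \in \{0, 1, \ldots, p-1\}$.

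Next I would plug this into the inner product on the real subspace of $\CC^p$ cut out by the pairing of conjugate coordinates in $j$, which is the restriction of the standard Hermitian form. For $i, j \in \{0,1,\ldots,p-1\}$,
\begin{align*}
\langle j(\gamma_i), j(\gamma_j)\rangle
  &= \sigma(\gamma_i)\sigma(\gamma_j) + \sum_{k=1}^{\ell}\bigl(\tau_k(\gamma_i)\overline{\tau_k(\gamma_j)} + \overline{\tau_k(\gamma_i)}\tau_k(\gamma_j)\bigr)\\
  &= \gamma_i\gamma_j\left(1 + \sum_{k=1}^{\ell}\bigl(\zeta_p^{k(i-j)} + \zeta_p^{-k(i-j)}\bigr)\right)\\
  &= \gamma_i\gamma_j \sum_{k=0}^{p-1}\zeta_p^{k(i-j)}.
\end{align*}

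At this point I would invoke the standard orthogonality relation for $p$-th roots of unity: $\sum_{k=0}^{p-1}\zeta_p^{k(i-j)}$ equals $p$ when $i \equiv j \pmod{p}$ and $0$ otherwise. Since $|i-j| < p$ for distinct indices in our range, this gives $\langle j(\gamma_i), j(\gamma_j)\rangle = 0$ for $i \neq j$ and $\langle j(\gamma_i), j(\gamma_i)\rangle = p\gamma_i^2$, which is precisely the claimed diagonal matrix (noting $\gamma_0 = 1$ gives the top-left entry $p$).

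There is no real obstacle here beyond bookkeeping — the single idea driving everything is that the $\gamma_i$ are simultaneous eigenvectors (up to a real scalar) for the Galois action on $K(\zeta_p)$, which diagonalizes the Gram matrix via the root-of-unity orthogonality. The only thing one needs to be slightly careful about is the choice of inner product conventions on the Minkowski space so that the pairs of complex embeddings contribute $\zeta_p^{k(i-j)} + \zeta_p^{-k(i-j)}$ rather than, say, a factor of $2$ in front of a single term; this is a matter of matching the definition of $j$ in the excerpt.
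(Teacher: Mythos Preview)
Your proof is correct and follows essentially the same approach as the paper: compute the Hermitian inner products of the $j(\gamma_i)$ directly, factor out $\gamma_i\gamma_j$, and invoke the orthogonality relation $\sum_{k=0}^{p-1}\zeta_p^{k(i-j)}=p\cdot[i=j]$. The only cosmetic difference is that you group the complex embeddings into conjugate pairs before summing, whereas the paper writes out all $p$ coordinates of $j(\gamma_i)$ at once and sums the Hermitian form term by term.
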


\begin{proof}
	To see this we first note that
		\begin{align*}
			j(1) 			&	= (1, \hdots, 1)	\\
			j(\gamma_i) 	&	= (\gamma_i, \zeta_p^i\gamma_i, \zeta_p^{2i}\gamma_i, \hdots, \zeta_p^{(p-1)i}\gamma_i)
		\end{align*}
\noindent	
Taking the Hermitian inner product of all pairs of vectors we find that the $(i,j)$ entry of the Gram matrix, $\mathcal{G}_{i,j}$, is given by:
		
		\begin{align*}	
			\sum_{r=0}^{p-1} \zeta_p^{ir}\gamma_i\cdot \overline{\zeta_p^{jr} \gamma_j} 	
					&	= \gamma_i\gamma_j \sum_{r=0}^{p-1} \zeta_p^{ir}\overline{\zeta_p^{jr}} \\
					&	= \gamma_i\gamma_j \sum_{r=0}^{p-1} \zeta_p^{ir}\zeta_p^{(p-j)r} \\
					&	= \gamma_i\gamma_j \sum_{r=0}^{p-1} \zeta_p^{(p+i-j)r} 
		\end{align*}
\noindent		
When $i\not= j$, $p+i-j \equiv k \mod{p}$ for some $k\in \{1, \hdots, p-1\}$ and therefore $\sum_{r=0}^{p-1} \zeta_p^{(k)r} = \sum_{n=0}^{p-1} \zeta^n = 0$. This shows that all off diagonal entries are 0, and when $i=j$ we have $\mathcal{G}_{i,i} = \gamma_i^2 \sum_{r=0}^{p-1} \zeta_p^{(p+i-i)r} = \gamma_i^2 \sum_{r=0}^{p-1} 1 = \gamma_i^2\cdot p$ as desired. 
	\end{proof}

From the Gram matrix in \cref{wildgram} we obtain the shape of $K_{\textup{wild}}$ by noting that the vectors $j(\gamma_1), \hdots, j(\gamma_{p-1})$ span $j\left(\mathcal{O}_K^\perp\right)$:

\begin{theorem}\label{wildshape}
	The shape of $K_{\textup{wild}}$ is represented by:
		\[
			\begin{pmatrix}	
				\gamma_1^2	&		0		&	\hdots	&		0			\\
					0		&	\gamma_2^2	&			&		\vdots		\\
					\vdots	&				&	\ddots	&		0			\\
					0		&		\hdots	&		0	&	\gamma_{p-1}^2	\\
			\end{pmatrix}
		\]
\end{theorem}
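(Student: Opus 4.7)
The plan is to read off the shape of $K_{\textup{wild}}$ directly from \cref{wildgram} by showing that the basis $\{j(\gamma_1),\dots,j(\gamma_{p-1})\}$ of the ambient lattice descends, up to an overall scalar, to a basis of $j(\mathcal{O}_K^\perp)$. The crucial preliminary observation is that the off-diagonal block of the Gram matrix computed in \cref{wildgram} already records that $\langle j(1), j(\gamma_i)\rangle = 0$ for every $i\in\{1,\dots,p-1\}$. I would make this explicit by computing $\operatorname{tr}(\gamma_i)$: since $\gamma_i$ is a rational scalar multiple of $\alpha^i$ with $1\leq i\leq p-1$, and $\operatorname{tr}(\alpha^i)=\alpha^i\sum_{r=0}^{p-1}\zeta_p^{ri}=0$ whenever $p\nmid i$, one gets $\operatorname{tr}(\gamma_i)=0$ for all $i\neq 0$.

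Next I would apply the perp map $\alpha\mapsto p\alpha-\operatorname{tr}(\alpha)$ to the integral basis. Since $1^\perp=0$ and $\gamma_i^\perp=p\gamma_i$ for $1\leq i\leq p-1$, the lattice $\mathcal{O}_K^\perp$ is freely generated over $\ZZ$ by $\{p\gamma_1,\dots,p\gamma_{p-1}\}$. Hence $j(\mathcal{O}_K^\perp)$ is generated by $\{p\,j(\gamma_1),\dots,p\,j(\gamma_{p-1})\}$, which differs from $\{j(\gamma_1),\dots,j(\gamma_{p-1})\}$ only by the overall scalar $p$.

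The proof then concludes by taking the $(p-1)\times(p-1)$ bottom-right block of the Gram matrix in \cref{wildgram}, which is $\operatorname{diag}(p\gamma_1^2,\dots,p\gamma_{p-1}^2)$, multiplying by the scalar $p^2$ from the perp map, and invoking the right $\RR^\times$-action on $\mathcal{G}$ (i.e.\ the scaling equivalence built into the definition of shape) to discard the common factor $p^3$. What remains is the diagonal matrix $\operatorname{diag}(\gamma_1^2,\dots,\gamma_{p-1}^2)$, as claimed.

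There is no real obstacle here: the work has all been done in \cref{wildgram}, and the only thing to check carefully is the vanishing of $\operatorname{tr}(\gamma_i)$ so that the perp-projection becomes just a rescaling rather than a genuine change of basis. The slightly delicate bookkeeping is making sure one tracks the factor of $p$ introduced by the perp map versus the factor of $p$ already sitting on the diagonal of $\mathcal{G}(K_{\textup{wild}})$, but since the space of shapes is taken modulo $\RR^\times$ both collapse to unity in the final representative.
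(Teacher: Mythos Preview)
Your proposal is correct and follows essentially the same route as the paper, which derives the theorem in one line from \cref{wildgram} by observing that $j(\gamma_1),\dots,j(\gamma_{p-1})$ span $j(\mathcal{O}_K^\perp)$. Your version is more explicit about the trace computation and the scalar $p$ coming from the perp map, but since the shape is defined modulo $\RR^\times$ this extra bookkeeping collapses exactly as you say, and the two arguments are the same in substance.
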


From this we see that the shape of $K_{\textup{wild}}$ is an orthorhombic lattice: i.e. $\mathcal{O}_K^\perp$ is spanned by $p-1$ pairwise orthogonal vectors of length $|\gamma_i|$. When $p=3$ we are again in the pure cubic case and we see that, when $3$ is wildly ramified, the shape of $K$ is rectangular (orthorhombic) and lies on the \textcolor{blue}{blue}  geodesic in $S_2$ shown as follows:

	\begin{figure}[H]
	\centering
		\begin{tikzpicture}[scale = 3.5]
			\tikzset{myptr/.style={decoration={markings,mark=at position 1 with %
    							{\arrow[scale=3,>=stealth]{>}}},postaction={decorate}}}
							
			\draw[-] (0,0)--(0,2) ;
			\draw[->, >=stealth] (-.25,0)--(1,0);
		
			\begin{scope}
    				\clip (0.007,.86) rectangle (.5,2.01);
    				\shade[
					bottom color = gray,
					middle color = white,
					top color = white,
					shading angle = 0
					] (0,1) -- (.5,.866) -- (.5,2) -- (0,2);
					
				\draw[thick] (.5,.866) -- (.5,2.01);
				\filldraw[fill=white, draw=black] (0,0) circle (1);
				\draw[draw=white, pattern=north west lines, opacity=.2, pattern color=gray] (0,1) to[bend left=14] (.5, .866) -- (.5,2.1) -- (0,2.1) -- (0,1);
				
			\end{scope} 
			\draw[line width=.25mm,  ->, >=stealth, blue, opacity=.7] (0,1)--(0,2.05);
			\draw[line width=.25mm,  ->, >=stealth, red, opacity=.7] (1/3,0.94280903)--(1/3,2.05);
			\node[scale=.75] at (.25, 1.5) {$S_2$};
		\end{tikzpicture}
		\caption{Subspaces of shapes of pure cubic fields (\textcolor{blue}{wild} and \textcolor{red}{tame}) within the space of shapes of rank 2 lattices, $S_2$}
		\label{rank2shapes}
	\end{figure}
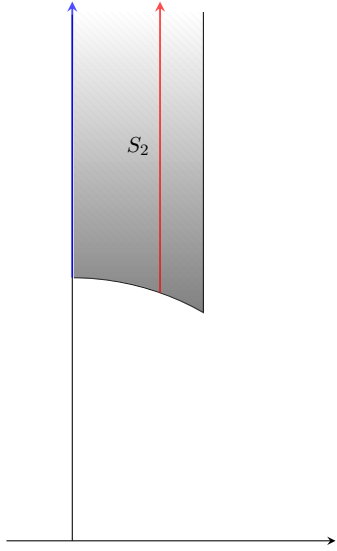

\noindent	
When $K$ is tame the shape is not orthorhombic and the distribution is along the \textcolor{red}{red} geodesic in $S_2$ with real part $\frac{1}{3}$. 
	
\begin{example}
 Using Theorem \ref{wildshape} in the cubic example above, \cref{purecubicbasis}, for $K$ wildly ramified at 3, we see that the shape of K is
	\[	\sh(K) = \begin{pmatrix} 	
			\sqrt[3]{a_1^2a_2^4}	&	0	\\
			0				&	\sqrt[3]{a_1^4 a_2^2}
		\end{pmatrix}.
	\]	
Though this is a rectangular lattice with side lengths $\sqrt[3]{a_1a_2^2}$ and $\sqrt[3]{a_1^2a_2}$ it is more convenient to scale this matrix by the (1,1)-entry giving us:
	\[	\sh(K) = 	\begin{pmatrix}
					1	&	0	\\
					0	&	\sqrt[3]{\frac{a_1}{a_2}}^2	
				\end{pmatrix}.
	\]	
which represents a rectangular lattice who side lengths are $1$ and, in the notation of \cite{purecubics}, $r_K^{1/3} = \sqrt[3]{\frac{a_1}{a_2}}$. Without loss of generality we may assume $a_1>a_2$ and we see that this lattice can be represented as a point on the blue geodesic above with height determined by $r_K$. 
\end{example}

We note that it is common to scale the Gram matrix of $\mathcal{O}_K^\perp$ to obtain a unit vector, as above, and to call this the shape of $K$. What we do below is slightly different but will better illuminate the beautiful symmetry in these fields we study. The reader may also notice that, in section 4, this alternative scaling, and hence alternative shape, will make the inherited measure feel very intuitive.

\subsection{Shape of $K_{\textup{tame}}$}
We saw above that an integral basis of $K_{\textup{tame}}$ is given by $\{1, \nu, \gamma_2, \hdots, \gamma_{p-1}\}$ where 
	\[	\nu = \frac{m + \alpha + \epsilon\alpha^2 + \hdots + \epsilon^{p-2}\alpha^{p-1}}{p}.	\]
We also saw the change of basis matrix from $\{1, \gamma_1, \gamma_2, \hdots, \gamma_{p-1}\}$ to the basis above is given by:
	\begin{align}\label{conjugatematrix}
		\mathcal{C}_{t} = 
		\begin{pmatrix}
			1			&	0			&	0						&	\hdots	&	\hdots	&	0	\\
			\frac{m}{p}		&	\frac{1}{p}		&	\frac{\epsilon b_2}{p}		&	\hdots 	&	\hdots	&	\frac{\epsilon^{p-2}b_{p-1}}{p}	\\
			0			&	0			&	1						&	0		&	\hdots 	&	0	\\
			\vdots		&	\ddots		&	\ddots					&	\ddots	&	\ddots	&	\vdots	\\
			0			&	\hdots		&	\hdots					&	\hdots	&	0		&	1
		\end{pmatrix}.
	\end{align}
The Gram matrix of $K_{\textup{tame}}$ is therefore given by
	\begin{align}\label{change}	 \mathcal{G}(K_{\textup{tame}}) = \mathcal{C}_{t} \mathcal{G}(K) \mathcal{C}_{t}^T	\end{align}
where $\mathcal{G}(K)$ is the Gram matrix of the rational basis, i.e.~the Gram matrix of $K_{\textup{wild}}$ seen above.

\begin{proposition}
	The Gram matrix of the integral basis of $K_{\textup{tame}}$ given above is:
	\[ 	\mathcal{G}(K_{\textup{tame}}) = \begin{pmatrix}
			p		&	m										&	0								&	0								&	\hdots	&	0								\\
			m		&	\nu'										&	\epsilon(\gamma_2)(\gamma_1)^2		&	\epsilon^2(\gamma_3)(\gamma_1)^3		&	\hdots	&	\epsilon^{p-2}(\gamma_{p-1})(\gamma_1)^{p-1}	\\
			0		&	\epsilon(\gamma_2)(\gamma_1)^2				&	p\gamma_2^2						&	0								&	\hdots	&	0								\\
			\vdots	&	\epsilon^2(\gamma_3)(\gamma_1)^3				&	0								&	p\gamma_3^2						&	\ddots	&	\vdots							\\
					&	\vdots									&									&	\ddots							&	\ddots	&	0								\\
			0		&	\epsilon^{p-2}(\gamma_{p-1})(\gamma_1)^{p-1}		&	\hdots						&									&	0		&	p\gamma_{p-1}^2					\\
		\end{pmatrix}
	\]
	where $\nu' = \frac{1}{p}\left(m^2 + \gamma_1^2 + \epsilon^2\gamma_1^4 + \hdots + \epsilon^{2p-4}\gamma_1^{2p-2}\right)$.
\end{proposition}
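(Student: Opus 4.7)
The plan is to apply the change-of-basis formula $\mathcal{G}(K_{\textup{tame}}) = \mathcal{C}_{t}\mathcal{G}(K)\mathcal{C}_{t}^{T}$ from equation (3.2), using that $\mathcal{G}(K) = \mathcal{G}(K_{\textup{wild}})$ is the diagonal matrix with entries $p, p\gamma_{1}^{2}, \ldots, p\gamma_{p-1}^{2}$ computed in Proposition 3.2. Because $\mathcal{G}(K)$ is diagonal, the $(i,j)$-entry of the triple product collapses to the single sum
\[
(\mathcal{G}(K_{\textup{tame}}))_{ij} = \sum_{k=0}^{p-1} (\mathcal{C}_{t})_{ik}(\mathcal{C}_{t})_{jk}(\mathcal{G}(K))_{kk},
\]
so the whole proof reduces to a direct calculation that reads off these sums entry by entry.

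The next step is to exploit the sparsity of $\mathcal{C}_{t}$: only the row indexed by $\nu$ (row $1$) is not a standard basis vector. Consequently, whenever both $i,j \in \{0, 2, 3, \ldots, p-1\}$ the sum has at most one nonzero term and reproduces either the diagonal block $p\gamma_{i}^{2}\delta_{ij}$ or the entries $(0,0)=p$, $(0,1)=(1,0) = 1 \cdot p \cdot \frac{m}{p} = m$, and zeros elsewhere on the first row/column. This accounts for every entry of $\mathcal{G}(K_{\textup{tame}})$ except those that involve the $\nu$-row or $\nu$-column.

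For the nontrivial row I compute $(1,k)$ for $k \geq 2$ and $(1,1)$ explicitly. For $k\geq 2$ only the $k$-th term survives, giving $\frac{\epsilon^{k-1}b_{k}}{p} \cdot p\gamma_{k}^{2} = \epsilon^{k-1}(b_{k}\gamma_{k})\gamma_{k}$. The key simplification is the identity $b_{k}\gamma_{k} = \alpha^{k} = \gamma_{1}^{k}$, which is immediate from the definition $\gamma_{k} = \alpha^{k}/b_{k}$ together with the fact that $b_{1}=1$ forces $\gamma_{1}=\alpha$. This gives exactly the stated off-diagonal entries $\epsilon^{k-1}\gamma_{k}\gamma_{1}^{k}$. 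For the $(1,1)$-entry the sum expands as
\[
\frac{m^{2}}{p} + \frac{\gamma_{1}^{2}}{p} + \sum_{k=2}^{p-1} \frac{\epsilon^{2(k-1)} b_{k}^{2}\gamma_{k}^{2}}{p},
\]
and applying $b_{k}\gamma_{k} = \gamma_{1}^{k}$ again converts $b_{k}^{2}\gamma_{k}^{2}$ into $\gamma_{1}^{2k}$, producing exactly the stated expression for $\nu'$.

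The work is entirely mechanical; the only conceptual point is the identification $b_{k}\gamma_{k}=\gamma_{1}^{k}$, which lets the $\nu$-row of the Gram matrix be expressed as a one-variable polynomial in $\gamma_{1}$. There is no serious obstacle beyond careful index bookkeeping — matching the shifts of $\epsilon$ and the exponents of $\gamma_{1}$ coming from the definition of $\nu$ — and I expect this to be the only place where sign or index errors could creep in.
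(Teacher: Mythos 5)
Your proposal is correct and follows exactly the paper's argument: the paper's proof is the one-line remark that the proposition ``is shown by computing $\mathcal{C}_{t}\mathcal{G}(K)\mathcal{C}_{t}^{T}$ and noting that $b_i\gamma_i^2 = b_i\gamma_i\left(\frac{\gamma_1^i}{b_i}\right) = \gamma_i\gamma_1^i$,'' which is precisely your key identity $b_k\gamma_k = \gamma_1^k$ applied after the same matrix computation. Your write-up simply fills in the entry-by-entry bookkeeping that the paper leaves implicit.
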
	

\begin{proof}
	This is shown by computing \ref{change} and noting that $b_i\gamma_i^2 = b_i \gamma_i \left(\frac{\gamma_1^i}{b_i}\right) = \gamma_i \gamma_1^i$. 
\end{proof}

Using this we obtain the shape of $K_{\textup{tame}}$ as follows:

\begin{theorem}\label{tameshape}
	The shape of $K_{\textup{tame}}$ is represented by:
	\[
		\begin{pmatrix}
				\nu'-\frac{m^2}{p}						&	\epsilon(\gamma_2)(\gamma_1)^2	&	\epsilon^2(\gamma_3)(\gamma_1)^3		&	\hdots	&	\epsilon^{p-2}(\gamma_{p-1})(\gamma_1)^{p-1}		\\
				\epsilon(\gamma_2)(\gamma_1)^2			&	p\gamma_2^2					&	0								&	\hdots	&	0							\\
				\epsilon^2(\gamma_3)(\gamma_1)^3			&			0					&	p\gamma_3^2						&	\ddots	&	\vdots						\\
					\vdots							&			\vdots				&		\ddots						&	\ddots	&	0							\\
				\epsilon^{p-2}(\gamma_{p-1})(\gamma_1)^{p-1}	&			0					&			\hdots					&		0	&	p\gamma_{p-1}^2				\\
		\end{pmatrix}
	\]
\end{theorem}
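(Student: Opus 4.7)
The plan is to obtain the shape matrix directly from the tame Gram matrix $\mathcal{G}(K_{\textup{tame}})$ in the preceding proposition by orthogonally projecting the basis onto the complement of $j(1)$. Since the shape is defined only up to scaling, I may replace the perp-map basis $\{\nu^\perp, \gamma_2^\perp, \ldots, \gamma_{p-1}^\perp\}$ by the orthogonally projected basis $\{j(\nu) - (m/p) j(1),\; j(\gamma_2), \ldots, j(\gamma_{p-1})\}$, which differs from it only by an overall factor of $p$.

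First I would compute traces. For $j \in \{1, \ldots, p-1\}$ we have $\tr(\alpha^j) = \alpha^j \sum_{k=0}^{p-1} \zeta_p^{jk} = 0$, so $\tr(\gamma_j) = 0$ for $j \in \{2, \ldots, p-1\}$; thus the vectors $j(\gamma_j)$ are already orthogonal to $j(1)$ and require no adjustment. On the other hand, only the constant term of $\nu$ contributes to the trace, giving $\tr(\nu) = m$, so $j(\nu)$ must be replaced by $j(\nu) - (m/p) j(1)$.

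Next I would read off the Gram matrix of the projected basis entry by entry. The block $\langle j(\gamma_i), j(\gamma_j)\rangle$ with $i, j \geq 2$ is untouched and gives the lower-right diagonal $\mathrm{diag}(p\gamma_2^2, \ldots, p\gamma_{p-1}^2)$ by Proposition \ref{wildgram}. The cross terms evaluate to $\langle j(\nu), j(\gamma_j)\rangle - (m/p)\langle j(1), j(\gamma_j)\rangle$; the second piece vanishes because $\langle j(1), j(\gamma_j)\rangle = \tr(\gamma_j) = 0$, so these entries are inherited unchanged from the tame Gram matrix as $\epsilon^{j-1}(\gamma_j)(\gamma_1)^j$. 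The only modification occurs in the $(\nu, \nu)$ entry, which decreases by $(m/p)^2 \langle j(1), j(1)\rangle = m^2/p$, becoming $\nu' - m^2/p$. Dropping the row and column corresponding to $1$ and applying this single adjustment recovers the displayed matrix exactly.

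There is no genuine obstacle here: the argument is bookkeeping layered on top of the preceding proposition, with the key conceptual input being that $\nu$ is the only basis vector with nonzero trace. The small subtlety worth flagging is that the shape is defined only up to scaling, and that is what licenses the replacement of the perp map by honest orthogonal projection.
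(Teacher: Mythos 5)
Your proposal is correct and follows essentially the same route as the paper: both note that the $j(\gamma_i)$ are already orthogonal to $j(1)$, replace $j(\nu)$ by its orthogonal projection $j(\nu) - \frac{m}{p}j(1)$, and observe that the only entry of the tame Gram matrix that changes is the $(\nu,\nu)$ entry, which drops by $\frac{m^2}{p}$. Your additional remarks on the traces, the vanishing of the correction in the cross terms, and the factor-of-$p$ discrepancy between the perp map and honest orthogonal projection are all accurate and only make explicit what the paper leaves implicit.
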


\begin{proof}
	Note that $j(\gamma_i)$ is orthogonal to $j(1)$ for all $i$. In the case of $j(\nu)$ we have 
		\[	j(\nu)^\perp = j(\nu) - \frac{\langle j(\nu),j(1)\rangle}{\langle j(1), j(1)\rangle} j(1) = j(\nu) - \frac{m}{p}j(1).\] 
	Therefore 
		\begin{align*}
			\langle j(\nu)^\perp, j(\nu)^\perp\rangle 	&	= \langle j(\nu), j(\nu)\rangle - 2 \left\langle j(\nu), \frac{m}{p} j(1)\right\rangle + \left\langle \frac{m}{p} j(1), \frac{m}{p} j(1)\right\rangle \\
											&	= \langle j(\nu), j(\nu)\rangle - \frac{2m}{p} \langle j(\nu), j(1)\rangle + 	\frac{m^2}{p^2}\langle j(1), j(1)\rangle		\\
											&	= \nu' - \frac{2 m}{p} \cdot m + \frac{m^2}{p^2}\cdot p 	\\
											&	= \nu' - \frac{m^2}{p}		
		\end{align*} 
\end{proof}

In conclusion we proved that the lattice $j(\mathcal{O}_K^\perp)$, for $K$ a pure prime degree number field, is an orthorhombic lattice if $p$ was wildly ramified in $K$ and a general (non-orthorhombic) lattice\footnote{or an ordinary lattice, as in \cite{terr}}, if $p$ was tamely ramified in $K$. Of course the tamely ramified fields are nearly orthogonal and the non-orthogonality comes from a single vector, the remaining sublattice is orthorhombic. Nonetheless, this again gives evidence that the type of lattice (and hence the space in which the shape lies) is determined by the ramification of $p$ in the field $K$. To compare with \cite{purecubics} this is the type I vs type II phenomena whose shapes lie on the imaginary axis and the vertical geodesic with real part $r = \frac{1}{3}$ respectively.


\section{Parametrizations}

The goal of this section will be to develop a parametrization of pure prime degree number fields with specified shape conditions. We first set up a correspondence between the pure prime degree fields and tuples in $\ZZ^{p-1}$ which satisfying some specific conditions. Once we have parametrized the fields, without shape conditions, we will use this to obtain parameterizations of the shapes from these tuples. 

\subsection{Fields} 
To parametrize the family of pure prime degree fields we use {\bf strongly carefree tuples}:

\begin{definition}
	We define a tuple $(a_1, a_2, \hdots, a_n)\in \ZZ^{n}$ to be {\bf strongly carefree} if $a_i$ is squarefree for all $i$ and $(a_i, a_j) = 1$ for all $j\not=i$ (i.e. the $a_i$ are squarefree and pairwise relatively prime). 
\end{definition}

\noindent
Note that since $m^{1/p}$ and $(-m)^{1/p}$ generate the same pure prime degree number field we need only consider the tuples in which $a_i>0$ for all $i$. For sake of uniformity we denote the set of strongly carefree (and positive) $n$-tuples by $\mathcal{SC}^n$ or, when the degree of $K$ is clear, by $\mathcal{SC}$ as in \cite{galoisquartic}. 

The pure prime degree fields are exactly parametrized by strongly carefree $(p-1)$-tuples. For two strongly carefree tuples $(a_1, a_2, \hdots, a_{p-1})$ and $(a_1', a_2', \hdots, a_{p-1}')$ we obtain integers $m=a_1a_2^2\cdots a_{p-1}^{p-1}$ and $m'=a_1'(a_2')^2\cdots (a_{p-1}')^{p-1}$. Furthermore $m$ and $m'$ generate the same field if $m' =  \gamma^{p}m^i$ for $(i, p)=1$ and $\gamma\in \ZZ$. Define an action of $g\in S_{p-1}$ on the set of strongly carefree tuples by:
	\[	g\cdot (a_1, a_2, \hdots, a_{p-1}) = (a_{g(1)}, a_{g(2)}, \hdots, a_{g(p-1)}).	\]
We can then define the group
\[	
C_{p-1} = \left\langle 	
			\begin{pmatrix}	
				1 			& 	2  	& 	3			&	4 	& 	 \hdots		& p-4 	& 	p-3			& 	p-2 	& 	p-1 \\	 
				\frac{p-1}{2}	&	p-1	&	\frac{p-1}{2} -1	&	p-2	&	 \hdots		& 2	&	\frac{p+1}{2}+1	&	1	& 	\frac{p+1}{2}
			\end{pmatrix}\right\rangle.
\]
which gives us the following:
\begin{theorem}
	There is a bijection:
	\begin{align*}
		\left\{ \text{\begin{minipage}{.4\textwidth}Isomorphism classes of pure prime degree, $p$, number fields \end{minipage}} \right\} && \longleftrightarrow  && \left\{ \text{\begin{minipage}{.3\textwidth} $C_{p-1}$-\text{equivalence classes of } $(a_1, \hdots, a_{p-1})\in \mathcal{SC}^{p-1} $\end{minipage}}\right\}.
	\end{align*}
\end{theorem}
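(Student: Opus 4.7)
The plan is to construct the bijection in two stages. First, I would define the set-level map by sending $(a_1, \ldots, a_{p-1}) \in \mathcal{SC}^{p-1}$ to $\QQ(\sqrt[p]{m})$ where $m := \prod_{i=1}^{p-1} a_i^i$. The pairwise coprimality and squarefreeness guarantee that for every prime $q \mid m$, the valuation $v_q(m)$ equals the unique $i$ with $q \mid a_i$, so $1 \leq v_q(m) \leq p-1$ and $m$ is $p$-th power free, making $\QQ(\sqrt[p]{m})$ a pure prime degree field. Conversely, every pure prime degree field is $\QQ(\sqrt[p]{m})$ for a positive $p$-th power free $m$ (positivity is WLOG for odd $p$, since $\sqrt[p]{-m} = -\sqrt[p]{m}$), and regrouping the prime factorization via $a_i := \prod_{v_q(m) = i} q$ recovers a strongly carefree tuple whose image under the forward map is $\QQ(\sqrt[p]{m})$.

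Next I would show that $\QQ(\sqrt[p]{m}) \iso \QQ(\sqrt[p]{m'})$ if and only if $m' \equiv m^j \pmod{(\QQ^*)^p}$ for some $j \in (\ZZ/p\ZZ)^*$. The reverse direction is immediate: if $m' = c^p m^j$ with $\gcd(j,p) = 1$, then $c(\sqrt[p]{m})^j \in \QQ(\sqrt[p]{m})$ is a $p$-th root of $m'$ generating the same degree-$p$ extension. The forward direction uses Kummer theory over $\QQ(\zeta_p)$: the compositum $K(\zeta_p) = \QQ(\zeta_p, \sqrt[p]{m})$ is a cyclic degree-$p$ extension of $\QQ(\zeta_p)$ classified by the class $[m] \in \QQ(\zeta_p)^*/\QQ(\zeta_p)^{*p}$, so $K \iso K'$ forces $m' \equiv m^j$ modulo $\QQ(\zeta_p)^{*p}$ for some $j \in (\ZZ/p\ZZ)^*$; a Galois descent, exploiting $\Gal(\QQ(\zeta_p)/\QQ)$-invariance of $m'/m^j \in \QQ^*$, then brings the relation back to $\QQ^*$.

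Finally I would translate $m' \equiv m^j$ into the permutation action on tuples. Expanding $m^j = \prod_i a_i^{ij}$ and writing $ij = p\lfloor ij/p\rfloor + r_i$ with $r_i \in \{1, \ldots, p-1\}$ gives
\[
m^j = \Bigl(\prod_i a_i^{\lfloor ij/p \rfloor}\Bigr)^{p} \cdot \prod_i a_i^{r_i},
\]
so modulo $p$-th powers the strongly carefree presentation of $m^j$ has $k$-th entry $a'_k = a_i$ whenever $ij \equiv k \pmod p$. A direct check then shows that the matrix generator in the statement realizes this action for $j = (p-1)/2$: odd indices satisfy $(2k-1)(p-1)/2 \equiv (p-2k+1)/2 \pmod p$ and even indices satisfy $2k \cdot (p-1)/2 \equiv p-k \pmod p$, matching both rows of the displayed permutation.

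The hardest step will be the Kummer-theoretic descent: making precise that a $p$-th power relation between rational numbers holding in $\QQ(\zeta_p)^*$ descends to one in $\QQ^*$. This should follow from the cohomology vanishing $H^1(\Gal(\QQ(\zeta_p)/\QQ), \mu_p) = 0$, which is valid because $\gcd(p, p-1) = 1$; explicitly, writing $m' = u^p m^j$ with $u \in \QQ(\zeta_p)^*$, the cocycle $\sigma \mapsto \sigma(u)/u$ takes values in $\mu_p$ and must be a coboundary, yielding $u = c\eta$ with $c \in \QQ^*$ and $\eta \in \mu_p$, so $m'/m^j = u^p = c^p$. Setting this up carefully and then reconciling the resulting $(\ZZ/p\ZZ)^*$-action with the specific cyclic generator displayed in the statement will be where the real work lies.
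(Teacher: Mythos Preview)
The paper does not actually give a proof of this theorem: it simply asserts that ``$m$ and $m'$ generate the same field if $m' = \gamma^{p}m^i$ for $(i,p)=1$ and $\gamma\in\ZZ$,'' defines the permutation action, and states the bijection. Your proposal therefore supplies an argument the paper omits, and the Kummer-theoretic route you outline (classify $K(\zeta_p)/\QQ(\zeta_p)$ by $[m]\in\QQ(\zeta_p)^\times/(\QQ(\zeta_p)^\times)^p$, then descend the $p$-th-power relation to $\QQ^\times$ using that $\gcd(p,p-1)=1$) is the standard and correct way to justify that assertion. Your plan is sound.

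Two small points to tighten. First, in your translation step the permutation you derive is $a'_k = a_{kj^{-1}\bmod p}$, so the displayed generator---which you correctly identify as multiplication by $(p-1)/2$ on indices---corresponds to $j^{-1}\equiv (p-1)/2$, i.e.\ $j\equiv -2\equiv p-2$, not $j=(p-1)/2$. This is harmless for the bijection since $j$ and $j^{-1}$ generate the same cyclic subgroup, but the identification as written is off by an inverse. Second, and more substantively, neither you nor the paper checks that the displayed generator actually has order $p-1$: multiplication by $(p-1)/2\equiv -2^{-1}$ is a primitive root mod $p$ only when $-2$ is, and this fails already for $p=17$ (where $-2\equiv 15$ has order $8$). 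What the bijection genuinely needs is the full $(\ZZ/p\ZZ)^\times$-action by $k\mapsto jk$ on indices; the specific permutation the paper writes down is a generator of that group for small $p$ but not in general. Your argument goes through cleanly if you phrase the quotient as ``orbits under multiplication of indices by $(\ZZ/p\ZZ)^\times$'' rather than tying it to the paper's explicit (and not always correct) generator.
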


\begin{example}
	When $p=5$ we have $K = \QQ(\sqrt[5]{m})$ with $m= a_1a_2^2a_3^3a_4^4$ and $C_4 = \langle (1243)\rangle$. The elements of the rational basis, as in \cref{purepbasis}, are
	\begin{align*}
		\gamma_1 &	= (a_1a_2^2a_3^3a_4^4)^{1/5}	&
		\gamma_2 &	= (a_1^2a_2^4a_3a_4^3)^{1/5}	&
		\gamma_3 &	= (a_1^3a_2a_3^4a_4^3)^{1/5}	&
		\gamma_4 &	= (a_1^4a_2^3a_3^2a_4)^{1/5}	
	\end{align*}
	and the orbit of $(a_1, a_2, a_3, a_4)$ under $g=(1243)$ is
		\[
			\left\{(a_1, a_2, a_3, a_4),  (a_2, a_4, a_1, a_3), (a_4, a_3, a_2, a_1), (a_3, a_1, a_4, a_2)\right\}.
		\]
\end{example}

\begin{remark}
	Letting $\mathcal{N}_p(X)$ be the number of pure, prime degree, fields with absolute discriminant bounded by $X$ we could obtain an asymptotic for $\mathcal{N}_p(X)$ by counting the number of strongly carefree tuples satisfying $\prod a_i < X^{1/p}$ and $a_i\geq 1, \forall i$. In what follows we will want to count those fields with absolute discriminant bounded by $X$ and shape in some ``nice" set $W$: we denote the fields of bounded discriminant and shape in $W$ by $\mathcal{N}_p(X, W)$. To count this we need to impose the shape conditions which will impose additional restrictions on the $a_i$. 
\end{remark}

\subsection{Shapes}
Though we will state results in the case of tame fields we will work primarily with wild fields in what follows recalling the simple conjugation between the two in \ref{conjugatematrix}. Now, the shape of $K$ is defined to be the lattice $j(\mathcal{O}_K^\perp)$ up to scaling, rotation, and reflections and so we will be a bit more specific in our representative of the shape of $K$. Sepecifically we will choose a representative of the shape of wild fields which has hypervolume 1 and whose diagonal entries are given in increasing order. The following normalization will show us the symmetry present in these lattices which forces the shapes to lie on an $\ell$-dimensional subspace of $S_{p-1}$. It will also help us prove the statement of regularized equidistribution and show the shape is a complete invariant of pure prime degree number fields.

\subsubsection{Shape parameterization}
Let $\ell = \frac{p-1}{2}$. One may have noticed that in the Gram matrix of $j(\mathcal{O}_K)$, and hence the shape matrix, a dependence on the $\gamma_j$: namely we see that $\gamma_j =\frac{\gamma_\ell\gamma_{\ell+1}}{\gamma_{p-j}}$. The determinant of the matrix given in Theorem \ref{wildshape} is then
	\[	(\gamma_\ell\gamma_{\ell+1})^{\ell} = \prod_{i=1}^{p-1} a_i^{\ell}. \]
Using this we are lead to a very natural normalization of the shape by $(\gamma_{\ell}\gamma_{\ell+1})^{-1} = \prod_i a_i^{-1}$. This yields the following representative of shape:
	\begin{align}\label{scaleshapewild}
			\sh(K_{\textup{wild}}) =
			\begin{pmatrix}	
				\lambda_1		&				&					&							&			&					\\
							&	\ddots		&					&							&			&					\\
							&				&	\lambda_{\ell}		&							&			&					\\
							&				&					&	\lambda_{\ell}^{-1}		 	&			&					\\
							&				&					&							&	\ddots	&					\\
							&				&					&							&			&	\lambda_1^{-1}	\\
			\end{pmatrix}
	\end{align}
where $\lambda_i = \frac{\gamma_i^2}{\gamma_{\ell}{\gamma_{\ell+1}}}$ for $1\leq i \leq \ell$ and we immediately see the $\ell$ parameters that govern the shape of these fields. 

Similarly, after normalizing by the same factor, the shape of $K_{\textup{tame}}$ given in Theorem \ref{tameshape} is:
	\begin{align}\label{scaleshapetame}
		\sh(K_{\textup{tame}})  = \begin{pmatrix}
				\nu''			&	\hdots		&	\epsilon^{\ell-1} b_\ell \lambda_\ell	&	\frac{\epsilon^{\ell} b_\ell}{ \lambda_\ell}	&	\hdots	&	 \frac{\epsilon^{p-2}b_{p-1}}{\lambda_1}	\\
							&	\ddots		&	0							&	\hdots							&			&	0								\\
							&				&	p\lambda_\ell					&									&			&	\vdots							\\
							&				&								&	\frac{p}{\lambda_\ell}					&			&									\\
							&				&								&									&	\ddots	&	0								\\
							&				&								&									&			&	\frac{p}{\lambda_1}					\\
		\end{pmatrix}
	\end{align}
where 
	\[	\nu'' = \frac{1}{p} \left( \sum_{i=1}^{\ell} \left( \epsilon^{2(i-1)}b_i  \lambda_i^2\right) + \left(\epsilon^{2(p-i-1)}b_{p-i}\frac{1}{\lambda_i^2}\right) \right).	\]

In both cases we see very explicitly that the shape of $K$ lies in an $\ell$-dimensional subspace of the space of rank $p-1$ lattices, and that the parameters controlling the shape are given by $\{\lambda_i\}_{i=1}^{\ell}$.

\begin{example}\label{shapeparametersex5} For $p=5$, $K = \QQ\left(\sqrt[5]{a_1a_2^2a_3^3a_4^4}\right)$, and $p$ wildly ramified in $K$, Theorem $\ref{wildshape}$ tells us that:
	\[	\Gr(\mathcal{O}_K^\perp) 
			= 	\begin{pmatrix} 
					\left(a_1^2a_2^4a_3^6a_4^8\right)^{1/5}		&	0								&	0									&	0						\\
								0						&	\left(a_1^4a_2^8a_3^2a_4^6\right)^{1/5}	&	0									&	0					\\
								0						&				0					&	\left(a_1^6a_2^2a_3^8a_4^4\right)^{1/5}		&	0						\\
								0						&				0					&			0							&	\left(a_1^8a_2^6a_3^4a_4^2\right)^{1/5}	
				\end{pmatrix}	
	\]
	The aforementioned normalization by $\prod_i a_i^{-1}$ gives us the following shape parameters:
	\begin{align*}
		\lambda_1 = \left(\frac{a_4^3a_3}{a_1^3a_2}\right)^{1/5} 	&&	\lambda_2 = \left(\frac{a_4a_2^3}{a_1a_3^3}\right)^{1/5}	\\
	\end{align*}

\end{example}

Note that as the shape of a number field is defined up to scaling, rotation, and reflection, it makes sense to scale as we have done and then perform some change of basis to obtain a shape matrix with increasing diagonal entries. After doing this we define the shape of $K$ as:

\begin{definition}
	We choose a representative of the shape of a pure prime degree number field, denoted $\Sh(K)$, to be the matrix $\sh(K)$ above, where the diagonal elements are given in increasing order. 

\noindent
Or, in terms of the shape parameters, we have that: 
	\[	\Sh(K) = \{(\lambda_{\sigma_1}, \lambda_{\sigma_2}, \hdots, \lambda_{\sigma_\ell}): 1< \lambda_{\sigma_1}<\lambda_{\sigma_2}<\hdots<\lambda_{\sigma_\ell}\}.	\]
\end{definition}

We now describe the shape parameters explicitly in terms of the $a_i$ and end this section by showing that the shape of a pure prime degree field, $K$, is a complete invariant. Note that the $\{\lambda_i\}$ and the $\{\lambda_{\sigma_i}\}$ are the same and so our description below, while not necessarily ordered as in $\Sh(K)$, will be enough for our purposes. 

\begin{lemma}\label{shapeparameters}
	The shape parameters of $K$, $\{\lambda_i^{\pm1}\}_{i=1}^{\ell}$, are given by:
		\[	\lambda_j = \left(\prod_{i=1}^{\ell} \left( \frac{a_i}{a_{p-i}}     \right)^{2ij - p - 2p\left\lfloor \frac{ij}{p}\right\rfloor}	\right)^{1/p}.	\]
\end{lemma}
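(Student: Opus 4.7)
The plan is to compute $\lambda_j=\gamma_j^2/(\gamma_\ell\gamma_{\ell+1})$ directly from the definitions $\gamma_j=\alpha^j/\prod_i a_i^{\lfloor ij/p\rfloor}$ and $\alpha=m^{1/p}=\prod_i a_i^{i/p}$, and then use an involution $i\leftrightarrow p-i$ to collapse the product from $p-1$ factors to $\ell$ factors in the stated form.

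First I would rewrite $\gamma_j=\prod_{i=1}^{p-1} a_i^{ij/p-\lfloor ij/p\rfloor}$, so that
\[
\gamma_j^2 = \prod_{i=1}^{p-1} a_i^{(2ij-2p\lfloor ij/p\rfloor)/p}.
\]
To evaluate the denominator $\gamma_\ell\gamma_{\ell+1}$, I would use that $\ell+(\ell+1)=p$, so $i\ell/p+i(\ell+1)/p=i$; since $\gcd(\ell,p)=\gcd(\ell+1,p)=1$ and $1\le i\le p-1$, neither $i\ell/p$ nor $i(\ell+1)/p$ is an integer, and the elementary identity $\lfloor x\rfloor+\lfloor y\rfloor=x+y-1$ for noninteger $x,y$ with $x+y\in\ZZ$ gives
\[
\lfloor i\ell/p\rfloor+\lfloor i(\ell+1)/p\rfloor = i-1.
\]
Hence $\gamma_\ell\gamma_{\ell+1}=\prod_{i=1}^{p-1} a_i^{i-(i-1)}=\prod_i a_i$, which also confirms the normalization already observed before the statement.

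Dividing out yields
\[
\lambda_j = \prod_{i=1}^{p-1} a_i^{e_i/p}, \qquad e_i := 2ij-p-2p\lfloor ij/p\rfloor.
\]
The remaining step is to pair up the factors $i$ and $p-i$. I would compute $\lfloor (p-i)j/p\rfloor$ by writing $(p-i)j/p=j-ij/p$; because $ij/p\notin\ZZ$ (as $1\le j\le\ell$, $1\le i\le p-1$, and $p$ is prime), $\lfloor j-ij/p\rfloor = j-\lfloor ij/p\rfloor-1$. Substituting this into the expression for $e_{p-i}$ gives $e_{p-i}=-e_i$ after cancellation. Consequently
\[
\lambda_j = \prod_{i=1}^{\ell} a_i^{e_i/p}\cdot\prod_{i=1}^{\ell} a_{p-i}^{-e_i/p}
         = \left(\prod_{i=1}^{\ell}\Bigl(\tfrac{a_i}{a_{p-i}}\Bigr)^{2ij-p-2p\lfloor ij/p\rfloor}\right)^{1/p},
\]
which is the claimed identity. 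The only subtle point is the floor-function manipulation in the two places above; everything else is bookkeeping, so I expect no real obstacle beyond checking that $p\nmid ij$ and $p\nmid i\ell$ (both follow from $p$ prime and the ranges of $i,j,\ell$) to justify the noninteger floor identities.
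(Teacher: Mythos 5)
Your proposal is correct and follows essentially the same route as the paper: both reduce to the identity $\lfloor (p-i)j/p\rfloor = j-1-\lfloor ij/p\rfloor$ (valid since $p\nmid ij$) to pair the factor at $i$ with the factor at $p-i$ and collapse the product over $1\le i\le p-1$ to one over $1\le i\le\ell$. The only cosmetic difference is that you explicitly verify the normalization $\gamma_\ell\gamma_{\ell+1}=\prod_i a_i$ via the same floor identity, whereas the paper takes this from its earlier determinant computation.
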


\begin{proof} Since $\lambda_j = \frac{\gamma_j^2}{\gamma_\ell \gamma_{\ell + 1}}$ we have:
	\begin{align*}
		\lambda_j^p 		&	= \left(\frac{\gamma_j^2}{\gamma_{\ell} \gamma_{\ell +1}} \right)^p\\
						&	= \frac{\prod_{i=1}^\ell a_i^{2ij - 2p\lfloor\frac{ij}{p}\rfloor} a_{p-i}^{2j(p-i) - 2p\lfloor\frac{(p-i)j}{p}\rfloor}}{\prod_{i=1}^{p-1} a_i^p }\\
						& 	= \prod_{i=1}^\ell a_i^{2ij - 2p\lfloor\frac{ij}{p}\rfloor - p} a_{p-i}^{2jp-2ji - 2p\lfloor j - \frac{ij}{p}\rfloor - p} \\
						& 	= \prod_{i=1}^\ell a_i^{2ij - 2p\lfloor\frac{ij}{p}\rfloor - p} a_{p-i}^{2jp-2ji - 2p(j - 1 - \lfloor\frac{ij}{p}\rfloor) - p} \\
						& 	= \prod_{i=1}^\ell a_i^{2ij - 2p\lfloor\frac{ij}{p}\rfloor - p} a_{p-i}^{2jp-2ji - 2pj +2p + 2p\lfloor\frac{ij}{p}\rfloor - p} \\
						& 	= \prod_{i=1}^\ell a_i^{2ij - p - 2p\lfloor\frac{ij}{p}\rfloor} a_{p-i}^{-2ji  + p + 2p\lfloor\frac{ij}{p}\rfloor} \\
						& 	=  \prod_{i=1}^{\ell} \left( \frac{a_i}{a_{p-i}}     \right)^{2ij - p - 2p\left\lfloor \frac{ij}{p}\right\rfloor}.
	\end{align*}
\end{proof}

\begin{remark}
Given the shape of a pure prime degree number field we get an element in the isomorphism class of $K$ by simply considering $\QQ(\lambda_j)$. More precisely the shape parameters, 
	\begin{align*}
		\lambda_j 		&	= \frac{\gamma_j^2}{\gamma_{\ell-1}\gamma_{\ell+1}} \\
					&	= \frac{\gamma_j^2}{\prod_{i=1}^{p-1} a_i}				\\
					&	= \frac{\left(\frac{\alpha^{2j}}{\prod_{i=1}^{p-1}a_i^{\left\lfloor\frac{ij}{p}\right\rfloor}}\right)}{\prod_{i=1}^{p-1} a_i}	\\
					& 	= \frac{\alpha^{2j}}{\prod_{i=1}^{p-1} a_i^{1 + \left\lfloor\frac{ij}{p}\right\rfloor}}
	\end{align*}
lie in $K = \QQ(\alpha^{2j})=\QQ(\alpha)$\footnote{This is because $\gcd(j, p)=1$}. Lemma \ref{shapeparameters} shows that the shape parameters are not rational numbers and therefore that $\QQ\subsetneq \QQ(\lambda_j)$. As $K$ is of prime degree and therefore has no intermediate subfields and we conclude that $\QQ\subsetneq \QQ(\lambda_j) \subset \QQ(\alpha^{2})= \QQ(\alpha)$ or that $\QQ(\lambda_j)=\QQ(\alpha)$. 

\end{remark}

It is then clear that two non-isomorphic fields have different shapes: this is enough to prove Theorem \ref{invariance} and show that the shape is a complete invariant within the family of pure prime degree number fields. One may ask whether this holds within the family of degree $p$ number fields? For instance, is the fact that these shape parameters are pure numbers (i.e. those coming exactly from pure prime degree fields) enough for shape to discriminate between prime degree fields?  The reader can compare this with the result of Harron in \cite{purecubics}. 

We end this section with an example:

\begin{example}
	As with the example above we let $K = \QQ\left(\sqrt[5]{a_1a_2^2a_3^3a_4^4}\right)$, and assume that $K$ is wild: then the shape parameters of $K$ are: 
	\begin{align*}
		\lambda_1 = \left(\frac{a_4^3a_3}{a_1^3a_2}\right)^{1/5} 	&&	\lambda_2 = \left(\frac{a_4a_2^3}{a_1a_3^3}\right)^{1/5}.	
	\end{align*}
	It is clear that $\QQ(\lambda_1)\subseteq \QQ(\alpha^2)$ as $\lambda_1 = \frac{\alpha^2/a_4}{a_1a_2a_3a_4}$ and that $\lambda_i\not\in \QQ$. Since there are no intermediate subfields we conclude that $\QQ(\lambda_1) = K$ and hence the shape determines the pure quintic field. 
\end{example}


\section{Measure Theoretic Background... towards equidistribution}\label{measuresection}
This section is devoted to the measures on the spaces of shapes of wild (resp. tame) pure prime degree number fields.
In general for a locally compact (Hausdorff) topological group, $G$, there exists a unique\footnote{up to scaling} Haar measure, $\mu_G := \mu$, which is left $G$-invariant (i.e.~$\mu(gA) = \mu(A)$ for all $A\subset G$ and $g\in G$; where $gA = \{gx\mid x\in A\}\subset G$).

\subsection{Measure on spaces of shapes}
Recall that, for $K$ a pure prime degree field with wild ramification at $p$ we have that the shape is a complete invariant of $K$ and that the shape lies in an $\ell$-dimensional subspace of the space of shapes. The shape parameters are given by the $\{\lambda_j\}$ in Lemma \ref{shapeparameters}. We define the measure on the space of shapes of each family of fields to be:
		\begin{align} 	d\mu_?(x_i) := \prod_i \frac{dx_i}{x_i}		\label{haarmeasure}		\end{align}
where $?= I$ if $K$ is wild (resp. $? = II$ if $K$ is tame) and $dx_i$ is the usual Lebesgue measure on $\RR$. This notation is motivated by that in \cite{purecubics} and the wild (type I) vs tame (type II) phenomena we see in both of these studies.

\begin{remark}
Similar to the situation in \cite{purecubics, galoisquartic} we see that this measure comes from the measure on certain diagonal matrices. First, note that the set of gram matrices:
	\[	G_{w}(\{x_i\}) 	= 	\begin{pmatrix}
						x_1	& 			&		&			&			&			\\
							&	\ddots	&		&			&			&			\\
							&			&	x_d	&			&			&			\\
							&			&		&	x_d^{-1}	&			&			\\
							&			&		&			&	\ddots	&			\\
							&			&		&			&			&	x_1^{-1}	\\
					\end{pmatrix}
	\]
is an orbit of the group $\mathcal{T}\subseteq\SL_{p-1}(\RR)$ where:
	\[	\mathcal{T} := \{G_{w}(x_i): x_i\in \RR^\times\}\footnote{$G(\{\sqrt{x_i}\})\cdot G(\{1\}) = G(\{x_i\})$}. 	\]
The measure on this group is exactly the one we see above which induces a measure on space of shapes of wild fields. 
\end{remark}

\begin{remark}\label{tameconjugation}
We saw in \ref{conjugatematrix} that the set of Gram matrices of tame fields is obtained by conjugating the wild ones by $\mathcal{C}_t$: specifically we have $\mathcal{G}_{tame} = \mathcal{C}_t \mathcal{G}_{wild} \mathcal{C}_t^T$. If we could get rid of the dependency of $\mathcal{C}_t$ on the field and find a uniform $\mathcal{C}_t$ with this property then we would get that the set of tame Gram matrices is an orbit of the conjugated torus $\mathcal{C}_t \mathcal{T}\mathcal{C}_t^{-1}$.

Then the shape of tame fields would inherit the Harr measure from the conjugated torus $C_t\mathcal{T}C_t^{-1}$. However, we do know that the shape is parametrized by the same $\ell$ shape parameters, and we define the measure on these matrices to be the measure defined in \ref{haarmeasure}. 
\end{remark}

We now follow a similar exposition to that in \cite{galoisquartic}, recalling that equidistribution is a statement about the weak convergence of a sequence of measures. Note that a sequence of measures $\{\mu_N\}$ on $S_{I}$ converges weakly to $\mu_{I}$ if for all compactly supported continuous functions, $f\in \mathcal{C}_c(S_{I})$, 
	\[	\lim_{N\rightarrow \infty} \int_{S_{I}} f d\mu_N = \int_{S_{I}} d\mu_{I}.	\]
The measure above, $\mu_N$, is defined by:

\[	\mu_N(W) = \frac{\#\{K \in \mathcal{K}_{\textup{wild}} : |\Delta(K)| \leq N, \Sh(K)\in W\}}{C N \log^{\ell -1}(N)}.		\]

For $x_i\in \RR$ we let
	\[	\Sh_{I}(\{x_i\}) = 
			\begin{pmatrix}  
				x_1		&	0		&			&	\hdots 		&			&	0	\\
				0		&	\ddots	&			&				&			&		\\
						&			&	x_\ell		&	\ddots		&			&	\vdots	\\
				\vdots	&			&	\ddots	&	x_{\ell}^{-1}	&			&					\\
						&			&			&				&	\ddots	&	0		\\
				0		&	\hdots	&			&	\hdots		&	0		&	x_1^{-1}
			\end{pmatrix} \in S_{I}.
	\]

Also, for $R_i\in \RR$ we define
	\[	W_{I}(R_1, R_2, \hdots, R_\ell) = W_{I}(\{R_i\}) = \{ \Sh_{I}(x_i): R_1\leq x_1< x_2<\hdots< R_{\ell+1}, \text{ and } R_i<x_i	\}	\] 
and let $\chi_{I\{R_i\}}$ denote its characteristic function, we will show that it is sufficient to test these functions towards showing equidistribution. This is an immediate generalization of Lemma 3.10 in \cite{galoisquartic} and is also included in \cite{multiquad} so the reader can see either of these references for more details. 

\begin{lemma}\label{wildmeasure}
	If we have that
	\[	\lim_{N\rightarrow \infty} \int_{S_{I}} \chi_{I \{R_i\}} d\mu_N		= \int_{S_{I}} \chi_{I \{R_i\}} d\mu_{I}	\]
for all $\{R_i\in \RR_{\geq 1}: R_i\leq R_{i+1} \text{ for all } i\in \{1, \hdots, \ell-1\}\}$ then $\mu_N$ converges weakly to $\mu_{I}$.
\end{lemma}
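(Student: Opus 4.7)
The plan is to use a Portmanteau-type argument, reducing an arbitrary compactly supported continuous test function to a uniform approximation by step functions built from the indicators $\chi_{I\{R_i\}}$. In the shape-parameter coordinates $(x_1,\dots,x_\ell)$ (with the ordering $x_1 < x_2 < \dots < x_\ell$ inherited from the definition of $\Sh(K)$), the space $\mathcal{S}_I$ identifies with an open subset of $\RR^\ell$, and the measure $d\mu_I = \prod_i dx_i/x_i$ is absolutely continuous with respect to Lebesgue measure. In particular, boundaries of coordinate boxes are $\mu_I$-null, which is exactly the hypothesis one needs for a Portmanteau-style argument to work.

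The first step is to observe that the sets $W_I(\{R_i\})$ are (half-infinite) coordinate boxes in these parameters, so by inclusion--exclusion any bounded coordinate box $B=\prod_i[a_i,b_i)$ respecting the ordering can be written as a finite $\ZZ$-linear combination of sets of the form $W_I(\{R_i\})$. Applying the hypothesis termwise gives
\[
    \lim_{N\to\infty}\mu_N(B)=\mu_I(B)
\]
for every such box $B$.

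The second step is the approximation argument. Fix $f\in\mathcal{C}_c(\mathcal{S}_I)$ with compact support $K$. Choose an open box $B_0\supset K$ with $\mu_I(\partial B_0)=0$; by the hypothesis applied to $B_0$ and a slightly larger box, the masses $\mu_N(B_0)$ are uniformly bounded in $N$. Since $f$ is uniformly continuous on $K$, for any $\varepsilon>0$ we can partition $B_0$ into finitely many small boxes $B_1,\dots,B_M$ on each of which $f$ oscillates by at most $\varepsilon$. Set $g=\sum_j f(x_j)\chi_{B_j}$ for chosen sample points $x_j\in B_j$. Then $\|f-g\|_\infty\le\varepsilon$ on $B_0$ and $g$ vanishes outside $B_0$.

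To conclude, write
\[
    \left|\int f\, d\mu_N-\int f\, d\mu_I\right|
    \le \left|\int(f-g)\, d\mu_N\right|+\left|\int g\, d\mu_N-\int g\, d\mu_I\right|+\left|\int(g-f)\, d\mu_I\right|.
\]
The first term is bounded by $\varepsilon\,\mu_N(B_0)$, which is uniformly controlled; the third by $\varepsilon\,\mu_I(B_0)$; and the middle term tends to $0$ by the first step applied to each of the finitely many boxes $B_j$. Letting $N\to\infty$ and then $\varepsilon\to 0$ gives the desired weak convergence.

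The main obstacle is the technical bookkeeping needed to guarantee that the approximation can be done \emph{uniformly} in $N$ near the boundary of $\mathcal{S}_I$: one has to verify that the $\mu_N$-mass of boxes shrinking to the boundary really does vanish in the limit, so that the compactly supported approximation is not missing any escaping mass. This is precisely where absolute continuity of $\mu_I$ and the freedom to enlarge $B_0$ to a box whose boundary is $\mu_I$-null come in, together with the hypothesis itself applied to the enlarged box. Given the similarity to Lemma 3.10 of \cite{galoisquartic}, the same choices work here verbatim after replacing the two-dimensional parameter space with the $\ell$-dimensional one.
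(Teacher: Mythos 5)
Your proposal is correct and follows essentially the same route as the paper: the paper's (much terser) proof likewise approximates compactly supported continuous functions by step functions of hypercubes, writes hypercubes as inclusion--exclusion combinations of the sets $W_I(\{R_i\})$, and defers the details to \cite{purecubics} and Proposition 2.46 of \cite{multiquad}. The only cosmetic difference is that the paper phrases the approximation as a sandwich from above and below, whereas you use a uniform $\varepsilon$-approximation together with the uniform bound on $\mu_N(B_0)$; these are interchangeable here.
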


\begin{proof}
We recall that compactly supported continuous functions on $\RR^\ell$ can be approximated above and below by step functions of hypercubes or, in our case, by step functions of sets $W_{I}(\{R_i\})$. The proof of this follows from the methods in \cite{purecubics} and repeated applications of inclusion and exclusion to show that hypercubes in $\RR^\ell$ can be written in terms of the sets $W_{I}(\{R_i\})$. For full details we refer the reader to Proposition 2.46 of \cite{multiquad}.

\end{proof}

\begin{lemma}\label{homogeneous}
	For $\{R_i\}$ as above we have that:
		\[	\mu_{I}(W_{I}(\{R_i\})) = \frac{1}{\ell!}H(\{R_i\})	\]
	where $H(\{R_i\})$ is a homogeneous polynomial of degree $\ell$ in $\log(R_i)$. 
\end{lemma}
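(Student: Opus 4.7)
The plan is to pass to logarithmic coordinates, where both the measure and the defining inequalities of $W_I(\{R_i\})$ become linear. Setting $u_i = \log x_i$ transforms $d\mu_I = \prod_i \frac{dx_i}{x_i}$ into ordinary Lebesgue measure $\prod_i du_i$ on $\RR^\ell$, and carries $W_I(\{R_i\})$ to a bounded polytope
\[ \wt{W}_I(\{R_i\}) \subset \RR^\ell \]
whose facet inequalities are of the form $u_i \geq 0$, $u_i < u_{i+1}$, or $u_i \leq \log R_i$ -- all linear in the $u_i$ with right-hand sides that are either zero or linear in the $\log R_i$. Computing $\mu_I(W_I(\{R_i\}))$ thus reduces to computing the Lebesgue volume of this polytope.

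With this reduction, homogeneity of the answer is immediate. Replacing $R_i$ by $R_i^c$ scales each $\log R_i$ by $c$, which dilates $\wt{W}_I(\{R_i\})$ by a factor of $c$ in every coordinate direction via the corresponding rescaling $u_i \mapsto c u_i$ (the constraints $u_i \geq 0$ and $u_i < u_{i+1}$ are preserved, while $u_i \leq \log R_i$ becomes $c u_i \leq c \log R_i$). The Lebesgue volume therefore scales by $c^\ell$, so $\Vol(\wt{W}_I(\{R_i\}))$ is homogeneous of degree $\ell$ in the variables $\log R_i$.

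To see that the volume is genuinely polynomial in the $\log R_i$, rather than some more exotic homogeneous function, I will proceed by induction on $\ell$: integrate out $u_\ell$ over $[0, \log R_\ell]$, splitting this range into subintervals $[\log R_{k-1}, \log R_k]$ for $k = 1, \ldots, \ell$ (with the convention $R_0 = 1$). On each piece the slice $\wt{W}_I \cap \{u_\ell = \mathrm{const}\}$ is again a polytope of the same form in one lower dimension, to which the induction hypothesis applies, yielding a polynomial integrand; ordinary integration then produces a polynomial in the $\log R_i$. The constant $\frac{1}{\ell!}$ is pinned down by the special case $R_1 = \cdots = R_\ell = R$: here $\wt{W}_I$ is the standard orthoscheme $\{0 \leq u_1 < \cdots < u_\ell \leq \log R\}$, which is one of $\ell!$ congruent simplices tiling the cube $[0,\log R]^\ell$ and so has volume $(\log R)^\ell/\ell!$. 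The main technical nuisance will be the piecewise bookkeeping required by the iterated integral when the $R_i$ are unequal, but this is mechanical rather than conceptual.
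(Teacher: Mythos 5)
Your proposal is correct and follows essentially the same route as the paper, whose entire proof is the observation that $\mu_I(W_I(\{R_i\}))$ equals the iterated integral $\int_{R_\ell}^{R_{\ell+1}}\int_{R_{\ell-1}}^{x_\ell}\cdots\int_{R_1}^{x_2}\frac{dx_1\cdots dx_\ell}{x_1\cdots x_\ell}$; passing to logarithmic coordinates and evaluating this is exactly your computation. The only differences are cosmetic --- your guessed facet inequalities ($u_i\geq 0$, $u_i\leq \log R_i$) are the mirror image of the paper's actual region ($\log R_i\leq u_i$ with common upper bound $\log R_{\ell+1}$), which changes nothing in the dilation and induction arguments, and your explicit justification of homogeneity, polynomiality, and the $\frac{1}{\ell!}$ normalization supplies detail the paper leaves implicit.
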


\begin{proof}
	This follows from the fact that
		\begin{align*}
			\mu_{I}(W_{I}(\{R_i\}))		&	=	\int_{R_{\ell}}^{R_{\ell+1}} \int_{R_{\ell}}^{x_\ell}\cdots \int_{R_1}^{x_2} \frac{1}{x_1\cdots x_\ell} dx_1\cdots dx_\ell 
			\hfill \qedhere
		\end{align*}
\end{proof}

We will see this measure explicitly in the case of $p=3$ and $p=5$ and so we end this section with a calculation of the measure in each case:

\begin{example}
For $p=3$, $\ell=1$, and we get that
	\[	\mu_I(W_I(\{R_1, R_2\})) = \int_{R_1}^{R_2} \frac{dx_1}{x_1} = \log\left(\frac{R_2}{R_1}\right)	\]
\end{example}

\begin{example}
For $p=5$, $\ell=2$, and we get that 
	\[ \mu_{I}(W_{I},\{R_1, R_2, R_3\})	= \frac{1}{2}\left(\log^2\left(\frac{R_3}{R_1}\right)- \log^2\left(\frac{R_2}{R_1}\right)\right)	\]
\end{example}


\section{Equidistribution of shapes}

This section is dedicated to the proof of Theorem \ref{equidistribution} where we will show that the shapes of pure prime degree fields are equidistributed (in a regularized sense) along  $\ell$-dimensional subspaces of the space of shapes. We have already found a parametrization for the fields with prescribed shape condition and we can approximate this number by counting particular lattice points in the region we obtain. Specifically we use the principle of Lipschitz and a strongly-carefree sieve. Let
		$\mathcal{R}(N, R_1, \hdots, R_{\ell+1})$ be
		\[  \left\{ (a_1, \hdots, a_{p-1})\in \RR_{> 0}^{p-1}: \prod_{i=1}^{p-1} a_i < N,  R_1 \leq \lambda_1^p  < \hdots < \lambda_\ell^p  \leq R_{\ell+1} \text{ and } \forall i,  R_i < \lambda_i^p \right\}\]
\noindent
where the $\lambda_i$ are the shape parameters defined in \cref{shapeparameters}. The first step toward counting strongly carefree tuples in this region is to compute the volume of $\mathcal{R}(N, R_1, R_2, \hdots, R_{\ell+1})$ and of its lower-dimensional shadows.

\subsection{Volumes} 

Before we compute any volumes we define a shadow of this region: 

\begin{definition}
	For a prime $p\in \ZZ$ and natural number $d\in\{1, 2, \hdots, p-2\}$ we define a {\bf $d$-dimensional shadow}\footnote{This definition is motivated by Lemma 9 of \cite{bhar05}. } of the region $\mathcal{R}(N, R_1, \hdots, R_{\ell+1})$ to be the orthogonal projection of $\mathcal{R}$ onto any $d$-dimensional coordinate subspace.
\end{definition}

We refer to these projections as the lower-dimensional shadows of $\mathcal{R}(N, \{R_i\})$ or $\mathcal{R}$ for simplicity. In order to apply the principle of Lipschitz we need both the volume of the main region, $\mathcal{R}$, and a bound on the measure of the lower dimensional shadows of $\mathcal{R}$. Towards this we have our first statement:

\begin{proposition}\label{volumeprop}
	For any prime $p$, the volume of $\mathcal{R}(N, R_1, R_2, \hdots, R_{\ell+1})$ is 
		\[	c(N-1)\log^{\ell-1}(N)\cdot H(R_1, \hdots, R_{\ell+1})	\]
	where $H(R_1, \hdots, R_{\ell+1})$ is the homogeneous polynomial in \cref{homogeneous}. 
\end{proposition}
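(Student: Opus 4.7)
The plan is to perform a change of variables that decouples the size constraint $\prod a_i < N$ from the shape constraints defining $W_I(\{R_i\})$, and then to evaluate each factor separately. Treating the strongly carefree setup as giving the implicit lower bound $a_i \geq 1$ so that $\mathcal{R}$ has finite volume, I pair the coordinates: for $1 \leq i \leq \ell$, let $r_i = a_i/a_{p-i}$ and $s_i = a_i a_{p-i}$. A pairwise Jacobian computation gives
\[
\prod_{i=1}^{p-1} da_i = \frac{1}{2^\ell}\,\prod_{i=1}^\ell \frac{dr_i}{r_i}\,\prod_{i=1}^\ell ds_i.
\]
By Lemma \ref{shapeparameters}, each $\lambda_j^p$ is a Laurent monomial in the $r_i$ alone, while $\prod a_i = \prod s_i$, so the shape conditions restrict only the $r_i$ and the size condition restricts only the $s_i$.

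For the shape factor, I substitute $x_j = \lambda_j^p$. In log-coordinates the map $(r_i) \mapsto (x_j)$ is linear with matrix of exponents $E = (e_{ij})$, $e_{ij} = 2ij - p - 2p\lfloor ij/p\rfloor$, so
\[
\prod_{i=1}^\ell \frac{dr_i}{r_i} = \frac{1}{|\det E|}\prod_{j=1}^\ell \frac{dx_j}{x_j},
\]
and the resulting $x$-integral over the shape region is exactly $\mu_I(W_I(\{R_i\}))$, which by Lemma \ref{homogeneous} equals $H(\{R_i\})/\ell!$. For the size factor, I compute $\int_{s_i \geq 1,\ \prod s_i < N} ds_1 \cdots ds_\ell$ by slicing on $T = \prod s_i$: the fiber at level $T$ has $(\ell-1)$-dimensional simplex volume $\log^{\ell-1}(T)/(\ell-1)!$ in log-coordinates, so the outer integral is
\[
\int_1^N \frac{\log^{\ell-1}(T)}{(\ell-1)!}\, dT = \frac{(N-1)\log^{\ell-1}(N)}{(\ell-1)!} + O\!\left(N\log^{\ell-2}(N)\right)
\]
by repeated integration by parts. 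Combining all factors yields the claimed formula with $c = \bigl(2^\ell\, |\det E|\, \ell!\, (\ell-1)!\bigr)^{-1}$.

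The main obstacle is the size integral: the true lower bounds on $s_i$ are $s_i \geq \max(r_i, 1/r_i)$ (coming from $a_i, a_{p-i} \geq 1$), which depend on the shape coordinates and prevent an exact separation of variables. The hard part of the proof is to verify that this shape-size coupling contributes only to the sub-leading $O(N\log^{\ell-2} N)$ corrections, so that the leading behavior $c(N-1)\log^{\ell-1}(N) H(\{R_i\})$ is preserved.
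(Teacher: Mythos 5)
Your proposal follows essentially the same route as the paper's proof: a multiplicative change of variables splitting the $p-1$ coordinates into the $\ell$ shape ratios and $\ell$ pair-products, with the Jacobian controlled by the Maillet-type determinant of \cref{jacobiandeterminant} (your $2^{\ell}\,|\det E|$ is exactly the paper's $c_p=\pm 2^{p-2}p^{\ell-1}h_p^-$), followed by an iterated integration in which the shape variables produce $\mu_I(W_I(\{R_i\}))=H(\{R_i\})/\ell!$ and the size variables produce the $N\log^{\ell-1}(N)$ growth. The one substantive divergence is in the size integral. The paper integrates the last $\ell$ variables over nested ranges $\frac{1}{x_1\cdots x_{i-1}}<x_i<\frac{N}{x_1\cdots x_{i-1}}$, so each intermediate variable contributes exactly $\log(N)$ and the answer comes out as $(N-1)\log^{\ell-1}(N)$ on the nose; you instead compute the volume of $\{s_i\ge 1,\ \prod s_i\le N\}$ by slicing on the total product, which gives $\frac{N\log^{\ell-1}(N)}{(\ell-1)!}+O(N\log^{\ell-2}(N))$. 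These are volumes of \emph{different} regions, and yours is the one that actually contains the lattice points being counted (integer tuples satisfy $a_i\ge 1$, not ``all partial products lie in $(1,N)$''); the price is an extra factor $\frac{1}{(\ell-1)!}$ in the constant relative to the paper's. Since \cref{volumeprop} leaves $c$ unspecified this does not contradict the statement, but the two computations agree only in the order of growth, not in the constant, and the discrepancy propagates into the explicit constants quoted later in the paper.

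The step you flag as the ``hard part''---that the coupling $s_i\ge\max(r_i,1/r_i)$ only perturbs lower-order terms---is a genuine obligation (one the paper sidesteps by its choice of nested bounds), but it is routine to discharge: the shape conditions confine $(r_1,\dots,r_\ell)$ to a fixed compact subset of $(0,\infty)^\ell$ independent of $N$ (invertibility of the exponent matrix again), so writing $s_i=\max(r_i,1/r_i)\,t_i$ reduces the inner integral to $\prod_i\max(r_i,1/r_i)\cdot V_\ell\bigl(N/\prod_i\max(r_i,1/r_i)\bigr)$ with $V_\ell(M)=\Vol\{t_i\ge1,\ \prod t_i\le M\}$, and $V_\ell(N/A)=\frac{N\log^{\ell-1}(N)}{A\,(\ell-1)!}+O(N\log^{\ell-2}(N))$ uniformly for $A$ ranging in a compact subset of $(0,\infty)$. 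Integrating this over the compact shape region recovers your leading term with an admissible error. With that inserted your argument is complete; it proves the proposition up to the value of $c$ and an $O(N\log^{\ell-2}(N))$ error, which is all that the subsequent Lipschitz and sieve arguments actually use.
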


\begin{proof}
To make the volume above a bit more pleasant to compute we define a change of variables from $\{a_i\}$ to $\{x_i\}$:
	\[	x_i	 = 	\begin{cases}	
					\lambda_i^p	&	 \text{ for } i\in \{1, \hdots, \ell\}		\\	
					a_{i-\ell}\cdot a_{p-i+\ell}	&	 \text{ for } i\in \{\ell+1, \hdots, p-2\}	\\ 
				\end{cases}	\]
and
	\[	x_{p-1} = \frac{\prod_{i=1}^{p-1}a_i}{\prod_{i=1}^{p-2} x_i}	\]

\noindent
Note that the Jacobian of this change of variables is given by
		\begin{align}\label{jacobiandet}	J = \begin{pmatrix}
					\frac{\partial x_1}{\partial a_1} 	& 	\hdots		&	\frac{\partial x_1}{\partial a_{p-1}}		\\
					\vdots					&	\ddots		&	\vdots							\\
					\frac{\partial x_{p-1}}{\partial a_1} 	& 	\hdots	&	\frac{\partial x_{p-1}}{\partial a_{p-1}}
				\end{pmatrix}
		\end{align}
whose determinant is equal, by \ref{jacobiandeterminant}, to $c_p = \pm 2^{p-2}p^{\ell-1}h_p^-$, where $h_p^-$ is the negative part of the class number of the cyclotomic field of degree $p$. 

Now, we have explicit bounds on the first $\ell$ variables coming from the definition of $\mathcal{R}$ and so to bound the remaining $\ell$ variables we simply note that the product of these variables is bounded between 1 and $N$:
	\[	1<x_1\cdots x_\ell \cdot x_{\ell+1} \cdots x_{p-1} = a_1\cdot a_2\cdots a_{p-1} < N.	\]
This gives us the following:
\begin{align*}
															& \int_{\mathcal{R}(X, R_1, \hdots, R_{p-1})} da_1\hdots da_{p-1}		\\
														=	&		\int_{R_{\ell}}^{R_{\ell+1}}\int_{R_{\ell-1}}^{x_\ell} \hdots \int_{R_{1}}^{x_2}\int_{1/\prod_{i=1}^{\ell} x_i}^{N/\prod_{i=1}^{\ell} x_i}\hdots\int_{\frac{1}{x_1\cdot x_2\cdots x_{p-2}}}^{ \frac{N}{x_1\cdot x_2\cdots x_{p-2}}}   \frac{1}{c_p}dx_{p-1}\hdots dx_{\ell+1}dx_{1}\hdots dx_{\ell}		\\
														=	&		\frac{1}{c_p}\int_{R_{\ell -1}}^{R_\ell}\int_{R_{\ell-1}}^{x_\ell} \hdots \int_{1/\prod_{i=1}^{p-3} x_i}^{N/\prod_{i=1}^{p-3} x_i}	\frac{N-1}{x_1\cdot x_2\cdots x_{p-2}}	dx_{p-2}\hdots dx_{\ell}\\
														= 	&		\frac{(N-1)\log(N)}{c_p} \int_{R_{\ell}}^{R_{\ell+1}}\int_{R_{\ell-1}}^{x_\ell} \hdots \int_{1/\prod_{i=1}^{p-4} x_i}^{N/\prod_{i=1}^{p-4} x_i}	\frac{1}{x_1\cdot x_2\cdots x_{p-2}}	dx_{p-3}\hdots dx_{\ell}\\
															&		\hspace{2in}\vdots	\\
														= 	&		\frac{(N-1)\log^{\ell-1}(N)}{c_p} \int_{R_{\ell }}^{R_{\ell+1}}\int_{R_{\ell-1}}^{x_\ell} \hdots \int_{R_1}^{x_2} \frac{1}{x_1\cdots x_\ell}	dx_1\cdots dx_\ell	\\
\end{align*}

Where the latter integral is the measure, $\mu_?$, whose evaluation is a homogeneous polynomial of degree $\ell$ in the $\{R_i\}$, see \ref{homogeneous}. 
\end{proof}

Now that we know the volume of the region $\mathcal{R}(N, \{R_i\})$ we need a bound on the volume of the lower dimensional shadows:

\begin{proposition}\label{shadowprop}
	The maximum measure of the lower dimensional shadows of $\mathcal{R}(N, \{R_i\})$ is $\mathcal{O}(N\log^{\ell-2}(N))$.
\end{proposition}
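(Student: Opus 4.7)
The plan is to bound each shadow by an iterated integral analogous to the one in Proposition \ref{volumeprop}, but with one fewer variable of integration. It suffices to treat the case of $(p-2)$-dimensional shadows, since a $d$-dimensional shadow for $d < p-2$ is a further projection of a $(p-2)$-dimensional one and the same method yields even stronger bounds in those cases.

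Fix a coordinate $k$ to omit, and set $S = \{1, \ldots, p-1\} \setminus \{k\}$. I would pass to logarithmic coordinates $b_i = \log a_i$, under which $\mathcal{R}$ becomes a polytope $\mathcal{P} \subset \RR^{p-1}$ cut out by the nonnegativity constraints $b_i \geq 0$, the product constraint $\sum_i b_i \leq \log N$, and the $\ell$ two-sided shape strips $\log R_j \leq L_j(b) \leq \log R_{j+1}$, where $L_j$ is the linear form $p \log \lambda_j$ read off from Lemma \ref{shapeparameters}. The shadow $\pi_k(\mathcal{R})$, measured in $a$-coordinates, has volume $\int_{\pi_k(\mathcal{P})} e^{\sum_{i \in S} b_i}\, db_S$, where $\pi_k$ denotes the coordinate projection forgetting $b_k$.

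The central step is to describe $\pi_k(\mathcal{P})$ by Fourier--Motzkin elimination of $b_k$ and then to apply the change-of-variables argument from Proposition \ref{volumeprop} to the resulting polytope in $\RR^{p-2}$. By Lemma \ref{shapeparameters}, $b_k$ appears with nonzero coefficient $\beta_j$ in each shape strip $L_j$, and writing $L_j(b) = \beta_j b_k + M_j(b_S)$ the elimination produces $\ell - 1$ independent induced strip constraints on $b_S$ of the form $\beta_{j'} M_j - \beta_j M_{j'} \in [\text{const}, \text{const}]$, together with a family of ``cross-constraints'' obtained by pairing the shape-strip bounds on $b_k$ against $b_k \geq 0$ and $b_k \leq \log N - \sum_{i \in S} b_i$. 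In the resulting polytope: $\ell - 1$ directions are pinned to bounded strips; one direction is controlled by the product constraint and contributes a factor of $N$; one further direction is restricted by a cross-constraint (effectively limiting an extent of $\log N$ to an extent of at most $\tfrac{1}{2} \log N$, which costs a $\log$-factor in the final count); and the remaining $(p-2) - (\ell - 1) - 1 - 1 = \ell - 2$ directions integrate freely, contributing a factor of $\log N$ each. Multiplying gives the desired bound $O(N \log^{\ell - 2}(N))$.

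The main obstacle is making the cross-constraint bookkeeping precise: verifying that exactly one additional direction is lost beyond the induced strips and the product constraint. Intuitively this corresponds to the fact that, for fixed $a_S$, the fiber in the $a_k$-direction has bounded multiplicative width (the shape strips pin $a_k$ down up to a constant factor), so combining this pin with $\prod_i a_i \leq N$ yields an effective tightening of the product bound on $a_S$ that eats one $\log N$. Establishing this rigorously requires the nondegeneracy of the coefficients $\beta_j$ from Lemma \ref{shapeparameters} and the $(\ell-1)$-dimensional span of the induced strip forms, and may necessitate splitting into cases based on whether $k \leq \ell$ or $k > \ell$ due to the asymmetric roles of $a_i$ and $a_{p-i}$ in the shape parameters.
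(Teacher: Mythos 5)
Your overall strategy---pass to logarithmic coordinates, eliminate the omitted variable, and argue that the shape strips pin $\ell-1$ directions while the effective product constraint loses one logarithm---has the right shape, and it runs parallel to what the paper actually does by other means (the paper converts the shape strips into constant two-sided bounds on the ratios $a_i/a_{p-i}$, discards orphaned variables, and reduces every shadow to an explicit even-dimensional paired region $\mathcal{R}_{d}(N,\{b_i,B_i\})$ whose volume is computed by a constant-Jacobian change of variables in Lemma \ref{shadowvol}). But there is a genuine gap at exactly the step you flag as the main obstacle, and the justification you offer for it is incorrect. In your bookkeeping the shadow has $(\ell-1)+1+1+(\ell-2)=p-2$ directions, and the bound $O(N\log^{\ell-2}(N))$ requires the ``cross-constrained'' direction to contribute $O(1)$; you claim this follows because its extent shrinks from $\log N$ to $\tfrac12\log N$, but that saves a factor of $2$, not a factor of $\log N$. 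The actual mechanism is the one you only gesture at afterwards: on the fiber over $a_S$ the shape strips force $a_k\asymp a_{p-k}$, so $\prod_i a_i<N$ descends to the shadow as (up to constants) $a_{p-k}^2\prod_{i\in S\setminus\{p-k\}}a_i<N$, and it is the exponent $2$ on the orphaned partner that kills the logarithm: $\int_1^{\sqrt N}Nw^{-2}\,dw=O(N)$, whereas an exponent $1$ gives $\int_1^{N}Nw^{-1}\,dw=N\log N$ (equivalently, $\int e^{\beta+\gamma}$ over $\{2\beta+\gamma\le\log N,\ \beta,\gamma\ge 0\}$ is $O(N)$, while over $\{\beta+\gamma\le\log N\}$ it is $\sim N\log N$). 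Until this is established, your argument only yields $O(N\log^{\ell-1}(N))$, which is the volume of the main region and useless as a Lipschitz error term.

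A second gap is the opening reduction: ``it suffices to treat $(p-2)$-dimensional shadows'' is not automatic, since lower-dimensional shadows are not volume-controlled by higher-dimensional ones (a thin slab has small top-dimensional volume but a large projection), and eliminating several variables by Fourier--Motzkin needs its own bookkeeping. The paper handles all $d$ uniformly by exploiting the pairing $(a_i,a_{p-i})$: each omitted coordinate either removes a whole pair or leaves an orphan that is bounded by constants, so every shadow is dominated by a paired region of even dimension $d'\le p-3$, and Lemma \ref{shadowvol} gives $O(N\log^{d'/2-1}(N))$, maximized at $O(N\log^{\ell-2}(N))$. If you repair the cross-constraint step as above and treat multi-variable elimination pair by pair, your route does go through---and it has the merit of bounding the true shadow rather than the shadow of a slice---but as written the key inequality is asserted rather than proved.
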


First we note that the bounds on the shape parameters can be used to bound the ratios:
	\[	r_i := \frac{a_i}{a_{p-i}}.	\]
Specifically, for all $i\in \{1, \hdots, \ell\}$, we have that $1\leq R_1<\lambda_i <R_{\ell+1}$ and we can use this to bound each ratio by something of the form:
	\[	R_1^{\delta_i} < r_i^{u_i} <  R_{\ell+1}^{\epsilon_i}	\]
where $u_i\in \{-1, 1\}$ and $\delta_i, \epsilon_i$ are non-zero rational numbers. 

\begin{remark}
It is not hard to see that the bounds on the shape parameters impose bounds on these ratios: if, for all $i\in \{1,\hdots, \ell\}$, we let $v_i = \log\left(\frac{a_i}{a_{p-i}}\right)$ then we are making a statement about the linear independence of the shape parameters, after applying the logarithm. More explicitly we have a linear map from the $\{v_i\}$ to $\{\log(\lambda_j)\}$ which is represented by the matrix:
	\[	C_\ell = \left(\left(2ij - p - 2p\left\lfloor \frac{ij}{p}\right\rfloor \right)\right)_{1\leq i,j \leq \ell}	\]
the invertibility of which is a consequence of \cref{jacobiandeterminant}. This shows that from the bounds on the shape parameters we obtain bounds on the ratios, $r_i$. 
\end{remark}

\begin{remark}
Each $d$-dimensional shadow of $\mathcal{R}(N, R_1, \hdots, R_{\ell + 1})$ is contained in the shadow of the slice of $\mathcal{R}(N, R_1, \hdots, R_{\ell + 1})$ where the coordinates perpendicular to the shadow are equal to 1. Also when any $a_j=1$ we obtain constant bounds for $a_{p-j}$ from above. This will allow us to ensure that the region defined below depends on an even number of variables: if the dimension of the shadow is odd then, without loss of generality, we know that at least one variable, $a_j$, appears without $a_{p-j}$. As we only care about the hypervolume of the shadows up to a constant, the constant bound on $a_j$ can then be used to exclude this from the region we define above. Doing this for all such $a_j$ we are left to worry only about the pairs $(a_i, a_{p-i})$ in the shadow. 

\end{remark}

\begin{example}
	When $p=5$ we have $r_1 = \frac{a_1}{a_4}$ and $r_2 = \frac{a_2}{a_3}$ and, without loss of generality, we have something of the form:
		\[	R_1 \leq r_1r_2^3 < r_1^3 r_2^{-1} \leq R_3.	\]
	In this case we have that 
		\[	R_1^3 < r_1^9r_2^{-3} \leq R_3^3	\]
	which we can multiply with the inequality above to gives us that:
		\[	R_1^4 \leq r_1^{10} < R_3^4	\]
	or $R_1^{2/5} \leq r_1 < R_3^{2/5}$. Isolating the other ratio we see that:
		\begin{align}
			R_1^3R_3^{-1} 						&	\leq r_2^{10} \leq R_1^{-1}R_3^3		\\
			\left(\frac{R_1^3}{R_3}\right)^{1/10} 		&	\leq r_2 \leq \left(\frac{R_3^3}{R_1}\right)^{1/10}. \label{5r2bound}
		\end{align}
	Now, consider the shadow given by $a_1a_2a_3$. Following the strategy from above we know that $a_1$ is bounded by constants and so to bound this shadow we only need to consider the region defined by:
		\[	\mathcal{R}_2(N) = \left\{ (a_2,a_3)\in \RR^2\mid 1< a_2a_3< N, b_1 < \frac{a_2}{a_3}< B_1\right\}	\]
	where $b_1$ and $B_1$ are the bounds given in $\ref{5r2bound}$. Using the change of variables:
		\begin{align*}	x_1 = \frac{a_2}{a_3} 	&&	x_2 = \frac{a_2a_3}{x_1} = a_3^2	\end{align*}
	we have the Jacobian matrix
		\[	\begin{pmatrix}	
				\frac{1}{a_3}	&	\frac{-a_2}{a_3^2}	\\
				0			&	2a_3	
			\end{pmatrix}
		\]
	which has determinant $2$ and we obtain the following bounds on these new variables:
		\begin{align*}	1 < x_1x_2 < N 	&&		b_1<x_1 <B_1.		\end{align*}
	Now, the volume of the shadow is approximated by:
		\begin{align*}
			\Vol(R_2(N)) 	&	= \int_{b_1}^{B_1} \int_0^{N/x_1} \frac{1}{2} dx_1 dx_2	 \\
						&	= \int_{b_1}^{B_1} \frac{N}{2x_1}dx_1 				\\
						&	= \frac{N\log\left(\frac{B_1}{b_1}\right)}{2}				\\
						&	= \mathcal{O}(N)
		\end{align*}
	The same can be done with any other shadows which shows that the maximum volume of any lower-dimensional shadow of $\mathcal{R}(N, R_1, R_2, R_3)$ is bounded by $\mathcal{O}(N)$.
\end{example}

Let $d$ be even, $i\in\{1, \hdots, \frac{d-1}{2}\}$, and define the region
	\[	\mathcal{R}_{d}(N, \{b_i, B_i\}) := \left\{(a_1, \hdots, a_{d})\in \RR^{d} \mid 1<a_1\cdots a_d<N, b_{i}<\frac{a_i}{a_{d-i}}<B_{i}\right\}.	\]
We will show that it is sufficient to consider the volume of such a region when computing the volume of the lower-dimensional shadows. First, we show that:
\begin{lemma}\label{shadowvol}
	The hypervolume of the region $\mathcal{R}_{d}(N)$ is $\mathcal{O}(N\log^{(d-3)/2}(N))$.
\end{lemma}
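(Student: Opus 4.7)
The plan is to introduce a change of variables separating the ratio coordinates (constrained in compact intervals) from the $N$-dependent product coordinates, then estimate the two resulting integrals independently.

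Writing $d = 2k$ and using the pairing $(a_i, a_{d-i})$ of $\mathcal{R}_d$, I would substitute $r_i = a_i/a_{d-i}$ and $s_i = a_i a_{d-i}$ for each $i$. The Jacobian is $2r_i$, so $da_i\, da_{d-i} = (2r_i)^{-1}\, dr_i\, ds_i$. Under this substitution the ratio constraints become $b_i < r_i < B_i$ with lengths depending only on $\{R_i\}$; the product $a_1\cdots a_d = s_1\cdots s_k$, so $1 < \prod_i s_i < N$; and each $s_i \geq 1$, inherited from $a_j \geq 1$ (the continuous region is intended to approximate a count of strongly carefree integer tuples). The volume then factors as
\[
\Vol(\mathcal{R}_d(N)) = \left(\prod_{i=1}^k \int_{b_i}^{B_i} \frac{dr_i}{2 r_i}\right) \cdot \int_{\substack{s_i \geq 1\\ s_1\cdots s_k \leq N}} ds_1\cdots ds_k,
\]
and the $r$-factor is a constant depending only on $\{R_i\}$, so all $N$-dependence sits in the $s$-integral.

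For the $s$-integral, I would substitute $u_i = \log s_i \geq 0$ to obtain $\int_{\{u_i \geq 0,\, \sum u_i \leq \log N\}} e^{u_1 + \cdots + u_k}\, du$. An elementary induction on $k$ (or a Laplace-type asymptotic localized near the face $\sum u_i = \log N$) shows this integral is asymptotic to $\tfrac{N (\log N)^{k-1}}{(k-1)!}$. Combining this with the constant $r$-factor and substituting $k = d/2$ delivers the claimed bound $\mathcal{O}(N \log^{(d-3)/2}(N))$.

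The main technical point to verify carefully is the $s_i \geq 1$ lower bound: without it the $s$-integral diverges, so the argument hinges on justifying this restriction from the paper's setup — namely, each coordinate of a carefree integer tuple satisfies $a_j \geq 1$, forcing $s_i = a_i a_{d-i} \geq 1$ automatically. A secondary issue, where I expect most of the work to lie, is ensuring the pairing on $\mathcal{R}_d$ truly involves all $d$ coordinates: this is where the preceding discussion in the paper (that odd-dimensional shadows can be reduced to even-dimensional pair-complete ones by absorbing constant-bounded unpaired coordinates into the multiplicative constants) plays an essential role, since any stray unpaired variable would contribute an additional $\log N$ factor and spoil the exponent.
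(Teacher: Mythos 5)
Your argument is correct and follows essentially the same strategy as the paper's proof: a change of variables that isolates the compactly constrained ratios $a_i/a_{d-i}$, so that they contribute only a constant factor while all of the $N$-dependence sits in the remaining coordinates. The only real difference is the choice of complementary variables: the paper pairs each ratio with the single coordinate $a_i$ (plus one overall product coordinate), which makes the Jacobian the constant $\pm 2$ and yields an iterated integral evaluating to $\tfrac{1}{2}N\log^{d/2-1}(N)\prod_i\log(B_i/b_i)$, whereas your choice $s_i=a_ia_{d-i}$ produces the non-constant Jacobian $\prod_i(2r_i)^{-1}$, which folds harmlessly into the compact $r$-integral and cleanly reduces the rest to the classical Dirichlet-type integral $\int_{s_i\geq 1,\ \prod s_i\leq N}ds\sim N(\log N)^{d/2-1}/(d/2-1)!$ — the same answer, and your explicit attention to the lower bound $s_i\geq 1$ (inherited from $a_j\geq 1$) is a point the paper leaves implicit. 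One caveat: both your computation and the paper's own proof give the exponent $d/2-1=(d-2)/2$, not the $(d-3)/2$ in the lemma statement, so your closing claim that this ``delivers the claimed bound'' is not literally accurate; the statement's exponent appears to be a typo, since the application in \cref{shadowprop} takes $d=p-3$ and uses the exponent $d/2-1=\ell-2$.
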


\begin{proof}
	The proof will follow the same strategy as \cref{volumeprop}. We define the following change of variables:
		\begin{align*}
			x_1 		& 	= \frac{a_1}{a_{d}}		\\
			x_2 		& 	= a_1				\\
			x_3		&	= \frac{a_2}{a_{d-1}} 		\\
			x_4		&	= a_2				\\
					&	\vdots				\\
			x_{d-1}	&	= \frac{a_{d/2}}{a_{d/2+1}}	\\
			x_d		&	= \frac{a_1\cdot a_2\cdots a_d}{x_1\cdot x_2\cdots x_{d-1}}			
		\end{align*}
	The Jacobian determinant of this change of variables is constant and equal to $\pm 2$, this follows \cref{shadowconstant}\footnote{The Jacobian here is obtained from that in \cref{shadowconstant} by permuting columns}. Then, for $i\in \{1, 3, \hdots, d-1\}$ we have the following bounds: $b_{(i+1)/2}<x_i<B_{(i+1)/2}$. We obtain bounds on the remaining variables using the fact that $1< x_1\cdots x_d<N$ and this gives us:
		\begin{align*}
		 	\Vol(\mathcal{R}_d(N, \{b_i, B_i\}))	&	=	\int_{b_1}^{B_1}\cdots\int_{b_{d-1}}^{B_{d-1}}\int_1^N\cdots \int_0^{N/\prod_{i=1}^{d-1} x_i} \frac{1}{2} dx_d\cdots dx_1 	\\
								&	=	\frac{1}{2}\int_{b_1}^{B_1}\cdots\int_{b_{d-1}}^{B_{d-1}}\int_1^N\cdots \int_{1/\prod x_i}^{N/\prod x_i} \frac{N}{x_1\cdots x_{d-1}} dx_{d-1} \cdots dx_{1}	\\
								&	=	\frac{N\log^{d/2 - 1}(N)}{2}\int_{b_1}^{B_1}\cdots \int_{b_{d-1}}^{B_{d-1}} \frac{1}{x_1x_2\cdots x_{d/2}} dx_1 dx_2\cdots dx_{d/2}					\\
								&	=	\frac{N\log^{d/2 - 1}(N)}{2}\prod_{i=1}^{d/2} \log\left(\frac{B_i}{b_i}\right)		\\
								& 	= 	\mathcal{O}(N\log^{d/2 - 1}(N)).\qedhere
		\end{align*}
\end{proof}

This gives the hypervolume of the region $\mathcal{R}_d(N, \{b_i, B_i\})$ and we use this to prove \cref{shadowprop}.

\begin{proof}[Proof of \cref{shadowprop}]
For the region $\mathcal{R}(N, R_1, \hdots, R_{\ell+1})$, take any $d$ dimensional shadow. If $a_j$ is in the shadow but $a_{p-j}$ is not then we have constant bounds on $a_j$ and we can drop the dimension of the shadow in consideration by 1: we do this for all $a_j$ until the only variables in the shadow are the pairs $(a_j, a_{p-j})$. We note that this will only change the error by a constant which will not affect our result. Once this is complete we are considering a region $\mathcal{R}_{d'}(N)$ where $d'$ is even and we have:
	\[	\Vol(\mathcal{R}_{d}) = \mathcal{O}(\Vol(\mathcal{R}_{d'})) = \mathcal{O}(N\log^{d'/2 -1}(N)).	\]
Now, this approximation will increase--by $\log(N)$--each time we add a pair of variables$(a_j, a_{p-j})$ to the shadow. As the largest, even, $d$ that we could consider is $d=p-3$ this shows that the maximum volume of any lower dimensional shadow is:
	\[	\mathcal{O}\left(N\log(N)^{\frac{p-3}{2} - 1}\right) = \mathcal{O}(N\log(N)^{\ell - 2}).	\]

\end{proof}

Combining propositions \ref{volumeprop} and \ref{shadowprop} we have the following:

\begin{theorem}\label{volume}
	For any prime $p$, the volume of $\mathcal{R}(N, R_1, R_2, \hdots, R_{\ell+1})$ is 
		\[	c(N-1)\log^{\ell-1}(N)\cdot H(R_1, \hdots, R_{\ell+1})	\]
	where $H(R_1, \hdots, R_{\ell+1})$ is the homogeneous polynomial in \cref{homogeneous}. 
	Moreover, the maximum measure of this region's lower-dimensional shadows is $O\left(N\log^{\ell-2}(N)\right)$. 
\end{theorem}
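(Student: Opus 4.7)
The plan is direct: Theorem \ref{volume} is precisely the conjunction of Propositions \ref{volumeprop} and \ref{shadowprop}, so the proof reduces to invoking each in turn. Since the nontrivial content lives in the two propositions, I will sketch the strategy underlying each half separately, then note how they combine.

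For the volume formula, the key step is the change of variables from $(a_1,\ldots,a_{p-1})$ to $(x_1,\ldots,x_{p-1})$ chosen so that the shape data and the size datum decouple: $x_i = \lambda_i^p$ for $1\le i\le \ell$ carries exactly the shape conditions $R_i \le x_i$ and $x_i<x_{i+1}$; $x_i = a_{i-\ell}\cdot a_{p-i+\ell}$ for $\ell+1\le i\le p-2$ packages the remaining coordinates in conjugate pairs; and $x_{p-1}$ is set so that $\prod x_i = \prod a_i$. By the cited Lemma \ref{jacobiandeterminant} the Jacobian is the constant $c_p = \pm 2^{p-2}p^{\ell-1}h_p^-$. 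The resulting iterated integral then factors: the first $\ell$ variables integrate over the prescribed shape region to produce the homogeneous polynomial $H(\{R_i\})$ from Lemma \ref{homogeneous}; each of the next $\ell-1$ intermediate variables, whose range is of the form $[1/\prod x_j,N/\prod x_j]$, contributes one factor of $\log N$; and the innermost variable $x_{p-1}$ contributes the linear factor $(N-1)$. Collecting the contributions with the inverse Jacobian gives the stated $c(N-1)\log^{\ell-1}(N)\cdot H(\{R_i\})$.

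For the shadow bound, the plan is to reduce any $d$-dimensional shadow to a region that depends only on paired coordinates $(a_j,a_{p-j})$. The shape conditions translate, via the invertibility of the matrix $C_\ell$ sending $\log(a_i/a_{p-i})$ to $\log\lambda_j$, into two-sided bounds $R_1^{\delta_i} < r_i^{u_i} < R_{\ell+1}^{\epsilon_i}$ on each ratio $r_i = a_i/a_{p-i}$. If a shadow contains some $a_j$ but not its partner $a_{p-j}$, these bounds force $a_j$ into an interval of bounded length, so dropping such coordinates changes the volume only by a multiplicative constant. One is then left with a region $\mathcal{R}_{d'}(N,\{b_i,B_i\})$ of even dimension $d'\le p-3 = 2\ell-2$, whose volume is bounded by Lemma \ref{shadowvol} as $O(N\log^{d'/2 - 1}(N))$. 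The maximum is achieved at $d'=2\ell-2$, yielding the claimed $O(N\log^{\ell-2}(N))$.

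The main obstacle, already discharged in the two propositions, is organizing the change of variables so that the size-$N$ and shape conditions decouple cleanly and the Jacobian is a single computable constant; the appearance of the negative class number $h_p^-$ in Lemma \ref{jacobiandeterminant} is the one place where genuinely arithmetic (rather than purely combinatorial) input enters the volume count. With both propositions in hand, Theorem \ref{volume} follows immediately by reading off each conclusion.
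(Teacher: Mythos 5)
Your proposal is correct and matches the paper exactly: Theorem \ref{volume} is stated in the paper as the immediate combination of Propositions \ref{volumeprop} and \ref{shadowprop}, and your sketches of both halves (the decoupling change of variables with constant Jacobian $c_p=\pm 2^{p-2}p^{\ell-1}h_p^-$ yielding $(N-1)\log^{\ell-1}(N)\cdot H(\{R_i\})/c_p$, and the reduction of shadows to even-dimensional paired regions bounded via Lemma \ref{shadowvol}) follow the same route as the paper's proofs of those propositions.
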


\subsection{Adventure into the land of Jacobian determinants}

In this section we prove results about the Jacobian determinants used in the proofs of \cref{volumeprop} and \cref{shadowvol}.

\subsubsection{Jacobian determinant in \cref{volumeprop}}
Here we prove that the Jacobian determinant, \ref{jacobiandet}, is non-zero. We define the change of variables again:
	\[	x_i	 = 	\begin{cases}	
					\lambda_i^p	&	 \text{ for } i\in \{1, \hdots, \ell\}		\\	
					a_{i-\ell}\cdot a_{p-i+\ell}	&	 \text{ for } i\in \{\ell+1, \hdots, p-2\}	\\ 
				\end{cases}	\]
where $\lambda_i$ are given in \cref{shapeparameters}.

\begin{proposition}\label{jacobiandeterminant}
	The Jacobian determinant of this change of variables from $(a_1, a_2, \hdots, a_{p-1})$ to $(x_1, x_2, \hdots, x_{p-1})$ is constant and equal to:
		\[	\pm 2^{p-2} p^{\ell-1}h_p^-.		\]
\end{proposition}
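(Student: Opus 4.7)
The plan is to exploit the fact that each new coordinate $x_j$ is a Laurent monomial in the $a_i$, reduce the Jacobian determinant to a purely combinatorial one, and then identify it with a classical cyclotomic class-number formula.

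First, by \cref{shapeparameters} each $x_j$ for $1 \le j \le \ell$ satisfies $x_j = \lambda_j^p = \prod_{k=1}^{\ell}(a_k/a_{p-k})^{d_{kj}}$, where
\[
    d_{kj} = 2kj - p - 2p\lfloor kj/p \rfloor = 2(kj \bmod p) - p;
\]
for $\ell+1 \le j \le p-2$, $x_j = a_{j-\ell}a_{p-j+\ell}$; and $x_{p-1}$ is by construction the ratio that forces $\prod_j x_j = \prod_i a_i$. Combining this with $\partial x_j/\partial a_i = c_{ij}\, x_j/a_i$, where $c_{ij}$ is the exponent of $a_i$ in $x_j$, one gets
\[
    \det\left(\frac{\partial x_j}{\partial a_i}\right) = \frac{\prod_j x_j}{\prod_i a_i}\,\det(c_{ij}) = \det(c_{ij}),
\]
a constant.

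Next, to compute $\det(c_{ij})$ I would switch to log-coordinates and pair them via $s_k = \log a_k - \log a_{p-k}$ and $t_k = \log a_k + \log a_{p-k}$ for $k = 1, \ldots, \ell$; this linear change of variables contributes $\pm 2^\ell$ to the Jacobian. In the $(s,t)$ coordinates the map $\log x = F(s,t)$ becomes block triangular: the first $\ell$ rows $\log x_j$ depend only on $s$ with coefficient matrix $D_\ell = (d_{kj})_{k,j=1}^\ell$; the next $\ell-1$ rows equal $t_1, \ldots, t_{\ell-1}$; and the last row is $t_\ell - \sum_k (\sum_j d_{kj}) s_k$. Expanding along the $t$-columns collapses an identity block and leaves $\det(c_{ij}) = \pm\, 2^\ell\,\det D_\ell$.

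The crux is to show $\det D_\ell = \pm\, 2^{\ell-1}\, p^{\ell-1}\, h_p^-$. Writing $d_{kj} = 2p\, B_1(\{kj/p\})$ exhibits $D_\ell$ as the ``odd part'' of the $(p-1)\times(p-1)$ multiplication-table matrix $\tilde D_{ij} = 2(ij \bmod p) - p$. Since $\tilde D_{i,p-j} = -\tilde D_{ij}$, the operator $\tilde D$ annihilates the $+1$-eigenspace of the involution $j \mapsto p-j$ and on the complementary $-1$-eigenspace coincides with $2D_\ell$. That $-1$-eigenspace is spanned by the $\ell$ odd Dirichlet characters $\chi$ of $(\ZZ/p\ZZ)^\times$, on which a direct calculation gives $\tilde D\,\chi = 2\bigl(\sum_{a=1}^{p-1} a\,\chi(a)\bigr)\,\bar\chi$. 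Substituting into the analytic class-number formula
\[
    h_p^- = \frac{(-1)^\ell}{(2p)^{\ell-1}}\prod_{\chi\text{ odd}}\sum_{a=1}^{p-1} a\,\chi(a)
\]
for $\QQ(\zeta_p)$ then produces the claimed value of $\det D_\ell$, and combining with the $2^\ell$ from the $(s,t)$-step yields $\pm\, 2^{p-2}\, p^{\ell-1}\, h_p^-$.

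The main obstacle is this final character-sum identification, most notably tracking the sign contributed by the permutation $\chi \mapsto \bar\chi$, which depends on whether the quadratic character mod $p$ is odd (i.e., on $p \bmod 4$). A cleaner alternative is to invoke the Maillet--Carlitz--Olson theorem on the classical Maillet determinant directly and relate $D_\ell$ to that matrix by elementary integer row and column operations, trading character theory for a bit more bookkeeping.
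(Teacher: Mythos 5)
Your proposal is correct, and its skeleton matches the paper's: (i) use $\partial x_j/\partial a_i = c_{ij}x_j/a_i$ together with $\prod_j x_j = \prod_i a_i$ to reduce to the constant coefficient matrix $(c_{ij})$; (ii) exploit the antisymmetry $j \mapsto p-j$ to peel off a factor $\pm 2^{\ell}$ and reduce to the $\ell\times\ell$ matrix $D_\ell = \bigl(2ij - p - 2p\lfloor ij/p\rfloor\bigr)$; (iii) identify $\det D_\ell$ with $\pm 2^{\ell-1}p^{\ell-1}h_p^-$. Your step (i) via diagonal conjugation is a slicker packaging of the paper's permutation-expansion argument, and your $(s,t)$ log-coordinates in step (ii) are exactly the paper's column operations (subtracting column $i$ from column $p-i$ and using that the column sums of $C_p$ are all $1$) in disguise. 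The genuine divergence is step (iii): the paper relates $D_\ell$ to the Maillet matrix $\bigl(R(rs')-\tfrac{p}{2}\bigr)$ by a column permutation $j\mapsto j^{-1} \bmod p$ and then cites Carlitz--Olson for $D_p = \pm p^{\ell-1}h_p^-$, whereas your primary route recomputes this from scratch by diagonalizing $\tilde D_{ij}=2(ij\bmod p)-p$ against the odd Dirichlet characters ($\tilde D\chi = 2\bigl(\sum_a a\chi(a)\bigr)\bar\chi$) and feeding the product $\prod_{\chi\ \mathrm{odd}}\sum_a a\chi(a)$ into the analytic class number formula --- which is essentially the interior of the Carlitz--Olson proof. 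Your route is self-contained and makes the appearance of $h_p^-$ transparent; the paper's route outsources that computation. The sign ambiguity you flag (the parity of the permutation $\chi\mapsto\bar\chi$, and of the column swaps in the Maillet comparison) is harmless here since the proposition only asserts the value up to sign, and your closing suggestion to fall back on Maillet--Carlitz--Olson is precisely what the paper does.
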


We will prove this proposition by showing that the determinant is equal to 
	\[	\pm 2^{\ell}\cdot \det\left(\left(2ij - p - 2p\left\lfloor \frac{ij}{p}\right\rfloor\right)_{1\leq i,j \leq \ell}\right)	\]
and then appeal to a result of Carlitz and Olson, \cite{carlitzolson}, regarding Maillet’s determinant. Before this we provide a short example, again with $p=5$:

\begin{example}
	We saw, in \cref{shapeparametersex5},  that the shape parameters of $K=\QQ(\sqrt[5]{m})$ are:
	\begin{align*}
		\lambda_1 = \left(\frac{a_4^3a_3}{a_1^3a_2}\right)^{1/5}	&&	\lambda_2 = \left(\frac{a_4a_2^3}{a_1a_3^3}\right)^{1/5}
	\end{align*}
	We will, without loss of generality, assume that $\lambda_1<\lambda_2$ and so $x_1 = \lambda_1^p$ and $x_2= \lambda_2^p$. Then $x_3 = a_1\cdot a_4$ and
	\[	x_4 = \frac{ a_1a_2a_3a_4}{x_1x_2x_3} = \frac{ a_1^4 a_3^3}{a_4^4 a_2}	\]
	The Jacobian of this change of variables is given by:
	\[ 	\Jac\left(\frac{\partial x_i}{\partial a_j}\right) = 	\begin{pmatrix}
												\frac{-3a_4^3a_3}{a_1^4a_2}	&	\frac{-a_4^3a_3}{a_1^3a_2^2}	&	\frac{a_4^3}{a_1^3a_2}		&	\frac{3a_4^2a_3}{a_1^3a_2}	\\
												\frac{-a_4a_2^3}{a_1^2a_3^3}	&	\frac{3a_4a_2^2}{a_1a_3^3}	&	\frac{-3a_4a_2^3}{a_1a_3^4}	&	\frac{a_2^3}{a_1a_3^3}	\\
												a_4	&	0	&	0	&	a_1	\\
												\frac{4a_1^3 a_3^3}{a_4^4 a_2}	&	\frac{-a_1^4 a_3^3}{a_4^4 a_2^2}	&	\frac{3a_1^4 a_3^2}{a_4^4 a_2}	&	\frac{ -4a_1^4 a_3^3}{a_4^5 a_2}	
											\end{pmatrix}	\]
	And, finally, the determinant is $\left| \Jac\left(\frac{\partial x_i}{\partial a_j}\right)\right| = 40 =  2^3\cdot 5$. 
	\\
	
	For sake of the arguments below, we define the coefficient matrix, $C_p$, of $\Jac\left(\frac{\partial x_i}{\partial a_j}\right)$ be the matrix of coefficients:
		\[	C_5 = 	\begin{pmatrix}	
						-3	&	-1	&	1	&	3	\\
						-1	&	3	&	-3	&	1	\\
						1	&	0	&	0	&	1	\\
						4	&	-1	&	3	&	-4	\\
					\end{pmatrix}.
		\]
	The sum of the entries in any column is equal to $1$ and so we can replace the bottom row with the sum of all rows without changing the determinant. Doing this we obtain
		\[	C_5'	= 	\begin{pmatrix}	
						-3	&	-1	&	1	&	3	\\
						-1	&	3	&	-3	&	1	\\
						1	&	0	&	0	&	1	\\
						1	&	1	&	1	&	1	\\
					\end{pmatrix}.
		\]	
	Finally, from columns 3 (resp. 4) we subtract columns 2 (resp. 1) to obtain:
		\[	C_5''	= 	\begin{pmatrix}	
						-3	&	-1	&	2	&	6	\\
						-1	&	3	&	-6	&	2	\\
						1	&	0	&	0	&	0	\\
						1	&	1	&	0	&	0	\\
					\end{pmatrix}.
		\]	
	Then we have that:
		\begin{align*}
			 \det(C_5) 	&	= \det(C_5') 		\\
			 			&	= \det(C_5'')		\\
						&	= 1 \cdot 1 \cdot\det\begin{pmatrix}	2	&	6	\\	-6	&	2	\end{pmatrix}	\\
						&	=2^2 \det\begin{pmatrix} 1	&	3	\\	-3	&	1\end{pmatrix}					\\
						&	=	2^3\cdot 5	\\
						&	=	\det\left(\Jac\left(\frac{\partial x_i}{\partial a_j}\right)\right).	
		\end{align*}
	This argument will pass through in general to show that the Jacobian determinant is equal to 
		\begin{align}\label{jacobianformulap}
			\det\left(\Jac\left(\frac{\partial x_i}{\partial a_j}\right)\right) & = 2^{\ell}\cdot \det\left(\left(2ij - p - 2p\left\lfloor \frac{ij}{p}\right\rfloor\right)_{1\leq i,j \leq \ell}\right).
		\end{align}
	
\end{example}

\begin{remark}
We believe that when $p$ is replaced by any odd integer $n$, and one defines the same Jacobian change of variables, that the resulting determinant should be non-zero. One difference is that the statement in \ref{jacobianformulap} is not true when $n$ is odd and composite. In this case the matrix $C_n'$ is not anti-symmetric about the columns and instead the formula for the determinant of the Jacobian becomes:
	\begin{align}\label{jacobianformulad}
		\det\left(\Jac\left(\frac{\partial x_i}{\partial a_j}\right)\right) & = 2^\ell \cdot \det\left(\left(2ij - nj - n\left\lfloor \frac{ij}{n}\right\rfloor + n\left\lfloor \frac{(n-i)j}{n}\right\rfloor\right) \right)_{1\leq i,j \leq \ell}.
	\end{align}
We utilize a result that holds for primes but are unaware of such a statement when $p$ is replaced by an odd composite number. 

\end{remark}

\begin{proof}[Proof of \cref{jacobiandeterminant}]
Note that the change of variables depends on the ordering of the $\lambda_i^p$. In general we cannot assume any particular ordering but this does not change the absolute value of the determinant as switching $\lambda_i$ and $\lambda_j$ only changes the determinant by a factor of $-1$. Similarly, switching $x_i = \lambda_i$ with $x_i=\lambda_i^{-1}$ multiplies the determinant by $-1$. As such, we need only prove this for a particular choice of $\{\lambda_i\}$.

\noindent
We first show that the determinant is a constant, which will allow us to focus on a more simple matrix, namely the coefficient matrix $\mathcal{C}_{p}$ which we define below. 

We have that the Jacobian determinant of the change of variables is given by:
	\[	\sum_{\sigma\in S_{p-1}} (\sgn(\sigma))\cdot \prod_{i=1}^{p-1} J_{i, \sigma(i)}	\]
where the product is over elements of the form $J_{i, \sigma(i)} = \frac{\partial x_i}{\partial a_{\sigma(i)}}$. But we also have that $x_i$ is a ratio of the $a_j$ as given in \ref{shapeparameters} and hence we have the following relationship:
	\[	J_{i, \sigma(i)} = \frac{\partial x_i}{\partial a_{\sigma(i)}} = c_{\sigma(i)}\frac{ x_i}{a_{\sigma(i)}}	\]
where $c_{\sigma(i)}$ is a constant, equal to the exponent of $a_{\sigma(i)}$ in $x_j$. Looking again at the product above we have:
	\begin{align*}
		\prod_{i=1}^{p-1} J_{i, \sigma(i)} 	&	= \prod_{i=1}^{p-1} \frac{\partial x_i}{\partial a_{\sigma(i)}} \\
									&	= \prod_{i=1}^{p-1} \frac{c_{\sigma(i)} x_i}{a_{\sigma(i)}} 	\\
									&	= \prod_{i=1}^{p-1} c_{\sigma(i)} \frac{a_i}{a_{\sigma(i)}} 	\\
									&	= \prod_{i=1}^{p-1} c_{\sigma(i)}					\\
									&	= c_\sigma.		
	\end{align*}
	This shows that each term in the sum above is a constant, and therefore that the determinant is a constant:
	\begin{align*}
		\det(J)	&	= \sum_{\sigma \in S_{p-1}} \left(\sgn(\sigma)\cdot c_{\sigma} \right)
	\end{align*}
\noindent
	so we can forget about the variables in the Jacobian and focus instead on the matrix of coefficients, $C_p$. This matrix has the property that the sum of any column is 1, and we can therefore replace the bottom row with the row of all 1s. We call this matrix $C_p'$ and note the formula:
		\[	
			C_p'(i,j) = \begin{cases}
						\left(2ij - p - 2p\left\lfloor \frac{ij}{p}\right\rfloor \right)	&	\text{ if } 1\leq i \leq \ell \text{ and } 1\leq j \leq p-1				\\
						0										&	\text{ if } \ell < i \leq p-2 \text{ and } j \neq \ell - i \text{ or } j \neq p-\ell-i	\\
						1										&	\text{ otherwise.}
					\end{cases}	
		\]	
	Now the matrix $C_p'$ is anti-symmetric about the columns for $1\leq i \leq \ell$, so $C_p'(i,j) = - C_p'(i, p-j)$, and symmetric for $\ell < i \leq p-1$. Subtracting the $i^{th}$ column from the $p-i^{th}$ column yields a matrix:
		\[	C_p'' = 	\left(
						\begin{array}{c | c} 
							C_p'(i,j)_{1\leq i,j\leq \ell}						&	2\cdot C_p'(i,j)_{1\leq i,j\leq \ell}		\\\hline
							C_p'(i,j)_{\substack{\ell< i\leq p-1 \\ 1\leq j \leq \ell}}	&	0	
						\end{array}
					\right).
		\]
	The matrix $C_p'(i,j)_{\substack{\ell< i\leq p-1 \\ 1\leq j \leq \ell}}$ is lower triangular with determinant 1 which shows that 
		\begin{align*}
			\det(C_p) = \det(C_p'') 	&	= \det(2\cdot C_p'(i,j)_{1\leq i,j\leq \ell})	\\
								&	= 2^\ell \cdot \det\left(\left(2ij - p - 2p\left\lfloor \frac{ij}{p}\right\rfloor\right)\right)_{1\leq i,j \leq \ell}
		\end{align*}

What remains is to show that $\det\left(\left(2ij - p - 2p\left\lfloor \frac{ij}{p}\right\rfloor\right)\right)_{1\leq i,j \leq \ell}$ is non-zero. In \cite{carlitzolson} the authors show that Maillet's determinant is non-zero and equal to $\pm p^{(p-3)/2}h_p^-$. Note that Maillet's determinant, which we denote $D_p$, is the determinant of the matrix $(R(rs'))$ where $R(r)$ is the least positive residue of $r$, and $s'$ is such that $ss' \equiv 1 \mod{p}$. One step in proving this involves showing that the determinant:
	\[	D_p' = \det\left(R(rs') - \frac{p}{2}\right)	= \frac{-1}{2}D_p.	\]
However, since $R(rs')- \frac{p}{2}  =  rs' - \frac{1}{2} - p\left\lfloor \frac{rs'}{p}\right\rfloor$, we only need to multiply this matrix $D_p'$ by 2 to obtain the matrix above (and apply some number of column swaps which will only change the determinant by a sign). This shows that the determinant of the change of variable matrix is equal to:
	\begin{align*}
												\det(J) 							&	= \pm2^\ell \cdot \det\left(\left(2ij - p - 2p\left\lfloor \frac{ij}{p}\right\rfloor\right)\right)_{1\leq i,j \leq \ell}	\\
																				&	= \pm2^\ell \cdot 2^\ell \cdot D_p'	\\
																				&	= \pm2^{p-1} \cdot  \frac{1}{2} D_p		\\
																				&	= \pm2^{p-2} p^{\ell-1} h_p^-
	\end{align*}
as desired.

\end{proof}

\subsubsection{Jacobian determinant in \cref{shadowvol}}\label{shadowsection}

Let $d$ be an even integer and define the following change of variables:
		\[	x_i = \begin{cases} \frac{a_i}{a_{i+1}} 			&	\text{ if } i \text{ is odd }				\\
							a_{i-1}					&	\text{ if } i \text{ is even and } i \not=d		\\
							\frac{N}{\prod_{j=1}^{d-1}x_i}	&	\text{ if } i=d						\\
				\end{cases}.
		\]
We note that this differs a bit from the change of variables seen in \cref{shadowvol} but, as was mentioned in its proof, we can go between these by simply permuting the columns of the matrix we obtain. 
\begin{lemma}\label{shadowconstant}
	The Jacobian determinant of this change of variables from $(a_1, a_2, \hdots, a_d)$ to $(x_1, x_2, \hdots, x_d)$ is constant and equal to $2$. 
\end{lemma}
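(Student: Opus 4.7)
The plan is to compute $\det J$ directly by exploiting the sparse structure of the Jacobian matrix $J=(\partial x_i/\partial a_j)$. The first step is to write $x_d$ explicitly as a monomial in the $a_j$'s. By construction one has $\prod_{j=1}^{d} x_j = \prod_{j=1}^{d} a_j$, so multiplying out the odd-index ratios and even-index factors for $j<d$ yields
\[
x_d \;=\; \frac{(a_2 a_4 \cdots a_d)^2}{a_1 a_3 \cdots a_{d-3}},
\]
and in particular $x_d$ does not depend on $a_{d-1}$.

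Next I would catalogue the rows of $J$. For odd $i<d$ the $i$-th row has exactly two nonzero entries, $1/a_{i+1}$ in column $i$ and $-a_i/a_{i+1}^2$ in column $i+1$. For even $i<d$ the $i$-th row is the standard basis vector $e_{i-1}$. The last row has $-x_d/a_{2k-1}$ in each odd column $2k-1\le d-3$, a $2x_d/a_{2k}$ in each even column $2k\le d$, and zero in column $d-1$.

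The key step is a Laplace expansion along the $d/2-1$ standard-basis rows $x_2, x_4, \ldots, x_{d-2}$. Each such row forces a unique choice of column (namely $a_1, a_3, \ldots, a_{d-3}$), contributing an identity block with a definite overall sign. This reduces the computation to the determinant of a $(d/2+1)\times (d/2+1)$ submatrix $D$ on the rows $\{x_1,x_3,\ldots,x_{d-1},x_d\}$ and the columns $\{a_2,a_4,\ldots,a_{d-2},a_{d-1},a_d\}$. After moving the $a_{d-1}$-column into last position, one expands first along it (its only nonzero entry is $1/a_d$, coming from row $x_{d-1}$), and then along the new last column $a_d$ (its only nonzero entry is $2x_d/a_d$, coming from row $x_d$). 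What remains is a $(d/2-1)\times(d/2-1)$ diagonal matrix with diagonal entries $-a_{2k-1}/a_{2k}^2$.

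Assembling the prefactors extracted in the three expansions gives
\[
|\det J| \;=\; 2\cdot\frac{x_d}{a_d^2}\cdot\prod_{k=1}^{d/2-1}\frac{a_{2k-1}}{a_{2k}^2},
\]
and substituting the explicit formula for $x_d$ makes all $a_i$-factors cancel identically, so $|\det J|=2$. The main obstacle is purely combinatorial: keeping track of the signs accumulated from the initial Laplace expansion, the single column swap, and the two subsequent cofactor expansions. When these are combined one finds that the net sign is $+1$ for every even $d$, which matches the direct base-case calculation used in the proof of Lemma \ref{shadowvol}; hence $\det J = 2$, as claimed.
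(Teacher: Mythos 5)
Your computation is correct, and it reaches the value $2$ by a genuinely different route from the paper. The paper first establishes that $\det J$ is constant by the same monomial argument used for Proposition \ref{jacobiandeterminant} (each Leibniz summand $\prod_i \partial x_i/\partial a_{\sigma(i)}$ collapses to a product of exponents), then discards the variables entirely and works with the integer coefficient matrix $C_d$, which it evaluates by a block recursion $\det(C_d)=\det\left(\begin{smallmatrix}1 & -1\\ 1 & 0\end{smallmatrix}\right)\det(C_{d-2})=\det(C_{d-2})$ down to the base case $\det(C_2)=2$. You instead attack $\det J$ head-on by Laplace expansion, exploiting the sparsity of the rows: the even-index rows are standard basis vectors, the $a_{d-1}$- and $a_d$-columns each carry a single surviving pivot, and what is left is diagonal; the constancy of the determinant then falls out of the explicit cancellation of the $a_i$ rather than being proved in advance. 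Your identification of $x_d=(a_2a_4\cdots a_d)^2/(a_1a_3\cdots a_{d-3})$ (reading the displayed $N$ in the definition of $x_d$ as the intended $\prod_j a_j$) and the resulting product $\frac{2x_d}{a_d^2}\prod_{k=1}^{d/2-1}\frac{a_{2k-1}}{a_{2k}^2}=2$ are both right. The trade-off: the paper's recursion is shorter and transparently independent of $d$, while your argument is self-contained and makes the cancellation visible, at the cost of sign bookkeeping that you assert rather than fully carry out --- this is harmless here, since only $|\det J|$ enters the volume estimate in Lemma \ref{shadowvol} (where the paper itself quotes the value as $\pm 2$), and your claimed sign checks against the $d=2$ and $d=4$ cases.
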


\begin{proof}
	We stated above that the determinant is constant, by the same argument as seen in the proof of Lemma \cref{jacobiandeterminant}, and therefore that we only need to worry about the coefficients in the Jacobian matrix. The coefficient matrix can be defined recursively as follows:
		\[	C_2 = \begin{pmatrix} 	1 	&	-1	\\
								0	&	2	
				\end{pmatrix}	
		\]
	and 	
		\[	
			C_d  = \left(\begin{array}{c c | c c c } 	
					1 		&	-1	&	0	&	\cdots 	&	0	\\
					1		&	0	&	0	&	\cdots 	&	0	\\\hline
					0		&	0	&		&			&		\\
					\vdots 	& \vdots	&		&	C_{d-2}	&		\\	
					0		&	0	&		&			&		\\
					-1		&	2	&		&			&		\\
					
				\end{array}	
			\right)
		\]
		
Since the determinant of $C_2=2$, and $\det(C_d) = \det\begin{pmatrix} 1 & -1 \\ 1 & 0 \end{pmatrix} \cdot \det(C_{d-2}) = \det(C_{d-2})$ we have that $\det{C_d} = 2$ for all $d$. 
\end{proof}

\subsection{Principle of Lipschitz}
To count integer points in the region, $\mathcal{R}(N, \{R_i\})$ we define its intersection with $\mathcal{L} = \ZZ^{p-1}$ as
	\[	\mathcal{R}_\mathcal{L}(N, \{R_i\})	:= R(N, R_1,\hdots, R_{\ell+1}) \cap \mathcal{L}\]
and using the principle of Lipschitz we obtain the following corollary:

\begin{corollary}
	For $p$, $N, R_i\in \RR$, and $R_1<R_2<\hdots<R_{\ell+1}$  as above we have:
	\[	\#\mathcal{R}_\mathcal{L}(N, R_1, \hdots, R_{\ell+1}) = \frac{N\log(N)^{\ell-1}}{c_p\cdot \ell!} H(R_1, \hdots, R_\ell) + O\left( N\log(N)^{\ell-2}\right)	\]
\end{corollary}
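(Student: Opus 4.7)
The plan is to deduce this corollary immediately by combining Theorem \ref{volume} (which gives the volume of $\mathcal{R}(N, \{R_i\})$ and controls the measure of its lower-dimensional shadows) with the classical principle of Lipschitz. Recall that the principle of Lipschitz states: if $\mathcal{R} \subseteq \RR^n$ is a bounded region whose boundary is a finite union of graphs of Lipschitz functions, then
\[
\left| \#(\mathcal{R} \cap \ZZ^n) - \Vol(\mathcal{R}) \right| = O\!\left( \max_{d < n} \Vol_d(\pi_d(\mathcal{R})) \right),
\]
where the maximum runs over the $(n-1)$-dimensional (and lower) coordinate projections of $\mathcal{R}$, with implied constant depending only on $n$.

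First I would verify that $\mathcal{R}(N, R_1, \ldots, R_{\ell+1}) \subseteq \RR^{p-1}$ satisfies the hypothesis. The region is cut out by finitely many inequalities of the form $\prod a_i < N$ and $R_i < \lambda_i^p < R_{i+1}$, where each $\lambda_i^p$ is a monomial of the Laurent-type $\prod a_i^{e_i}$ as given by Lemma \ref{shapeparameters}. Each bounding hypersurface is therefore real-analytic away from the coordinate axes, and in particular the boundary of $\mathcal{R}$ decomposes into finitely many pieces, each expressible as the graph of a Lipschitz function of the remaining $p-2$ coordinates on a compact domain. Thus the principle of Lipschitz applies.

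Next I would apply Theorem \ref{volume}. The volume part gives
\[
\Vol(\mathcal{R}(N, \{R_i\})) \;=\; \frac{(N-1)\log^{\ell-1}(N)}{c_p \cdot \ell!}\, H(R_1, \ldots, R_{\ell+1}),
\]
using that the iterated integral appearing in the proof of Proposition \ref{volumeprop} is exactly $\mu_I(W_I(\{R_i\})) = \frac{1}{\ell!} H(\{R_i\})$ by Lemma \ref{homogeneous}. Replacing $N-1$ by $N$ introduces an error of size $O(\log^{\ell-1}(N))$, which is absorbed into the $O(N \log^{\ell-2}(N))$ error term. The shadow estimate from Theorem \ref{volume} states that every coordinate projection of $\mathcal{R}$ has measure $O(N \log^{\ell-2}(N))$, which is precisely the error output by the principle of Lipschitz. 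Combining these yields
\[
\#\mathcal{R}_\mathcal{L}(N, R_1, \ldots, R_{\ell+1}) \;=\; \frac{N \log^{\ell-1}(N)}{c_p \cdot \ell!}\, H(R_1, \ldots, R_{\ell+1}) \;+\; O\!\left( N \log^{\ell-2}(N) \right),
\]
as claimed.

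The only mild subtlety is checking the Lipschitz condition on the boundary, but because every bounding hypersurface is given by a monomial equation in positive real coordinates, each can be smoothly solved for any one of its variables on a compact subregion, and Lipschitz bounds follow. Everything else is a direct invocation of results already established in Section 5.
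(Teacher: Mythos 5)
Your proposal is correct and follows essentially the same route as the paper: the corollary is obtained there as a direct application of the principle of Lipschitz to Theorem \ref{volume}, with the main term coming from the volume computation (whose inner integral is $\frac{1}{\ell!}H(\{R_i\})$ by Lemma \ref{homogeneous}) and the error term coming from the shadow bound of Proposition \ref{shadowprop}. Your added verification of the Lipschitz boundary condition and the absorption of the $N-1$ versus $N$ discrepancy are details the paper leaves implicit, but they do not constitute a different argument.
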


What we really need to count are the integer points in this region which satisfy the infinitely many congruence conditions coming from our strongly carefree condition.

\subsubsection{Point Count/Sieve}

As stated above, we have translated our problem of counting number fields of bounded discriminant and shape restrictions to counting certain strongly carefree tuples in some specific regions.  
Since we have already computed the volume of the region, and estimated the number of lattice points in said region, we must now apply a strongly carefree sieve. 

For this section we will try to avoid using $p$ entirely, and will consider the general case of $n$-tuples. The strongly carefree condition will impose infinitely many congruence conditions and will correspond to the tuples $(a_1, \hdots, a_n)$ such that (for all primes $q$) $a_i\not\equiv 0 \mod{q^2}$ and if $a_i\equiv 0 \mod{q}$ then there is no other $a_j\equiv 0 \mod{q}$ for $j\not=i$. These are the squarefree, and relatively prime conditions respectively and this set will be denoted, as above, by $\mathcal{SC}_n$. 

We begin by defining the following set: 
	\[	\mathcal{C}_{n,q}^{cf}	:= \left\{(a_1,\hdots, a_n)\in \left(\ZZ/q^2\ZZ\right)_{\not=0}^n: \text{ at least two of the } a_i \text{ are } 0\mod{q} \right\}	\]

This, being the complement of the set that we desire, will play an important role in the application of the elementary sieve and has a particularly nice expression (for a fixed prime $q$):
	\begin{align}	\# \mathcal{C}_{n,q}^{cf} = \sum_{i=2}^n \binom{n}{i} (q-1)^i (q^2-q)^{n-i}	\end{align}
This is obtained by simply choosing $i$ entries to be $0\mod{q}$; noting that for each of the $i$ entries there are $(q-1)$ such values. The other entries only need to avoid multiples of $q$ and whence there are $(q(q-1))$ possibilities for the $n-i$ remaining entries\footnote{note we have avoided any entries that are $0 \mod{q^2}$}. Letting $\mathcal{C}_{n,q}^{s}:= \{(a_1,a_2,\hdots, a_n)\in \ZZ^n: a_i\equiv 0 \mod{q^2} \text{ for some } i\in \{1,\hdots, n\}\}$, we define the set:
	\[	\mathcal{C}_{n,q}^{scf} = \mathcal{C}_{n,q}^{s} \cup \mathcal{C}_{n,q}^{cf}	\]
noting that this will gather all tuples (in $\ZZ/q^2\ZZ$) which are not squarefree, and not relatively prime with respect to $q$. We denote the complement of this set in $\ZZ/q^2\ZZ$ by	$\mathcal{SC}_{n,q}$ and, finally, we note that:
	\begin{align}	\#\mathcal{SC}_{n,q} = (q-1)^n(q^n+nq^{n-1})	\end{align}

\subsubsection{$q$-adic density} 
For all $Y\geq 2$ we define the following:
	\[	n(Y) := \prod_{q\leq Y} q^2	\]
and $\mathcal{C}_{n,Y}$ denote the set of congruence conditions, $\mathcal{C}_{q,n}^{scf}$, modulo $n(Y)$ for $1<q\leq Y$. 
Letting
	\[	\mathcal{L}_n(Y) := \{(a_1,a_2,\hdots, a_n)\in \ZZ^n: (a_1, a_2, \hdots, a_n)\not\in \mathcal{C}_{n,Y}	\}	\]
we obtain the following $q$-adic density of $\mathcal{L}_n(Y)$:
\begin{align*}
	\delta_q(\mathcal{L}_n(Y))		&	=  \frac{\#\mathcal{SC}_{n,q}}{q^{2n}}	\\	
							&	= \frac{(q-1)^n(q+n)q^{n-1}}{q^{2n}} 		\\
							&	= \frac{(q-1)^n(q+n)}{q^{n+1}}			\\
							&	= \frac{q^{n+1} - n^2 q^{n-1} + (q+n)\left(\sum_{i=2}^n\binom{n}{i}q^{n-i}(-1)^i\right)}{q^{n+1}}	\\
							&	= 1 - \frac{n^2}{q^2} + (q+n)\sum_{i=2}^n\binom{n}{i}q^{-i-1}(-1)^i	
\end{align*}

\subsubsection{Application to pure prime degree fields}
Using the notation above and a standard argument, as in \cite{galoisquartic}, we have the following corollary which will lead us to the main result of this paper. 
\begin{corollary}
	For $p$ prime and $1<R_1<R_2<\hdots<R_{\ell+1}$
	\[	\mathcal{R}_{\mathcal{L}_{p-1}(Y)}(N, \{R_i\}) = c_p^{-1} \prod_{q\leq Y} \delta_q(\mathcal{L}_{p-1}(Y)) N\log(N)^{\ell-1}H(R_1, \hdots, R_{\ell+1}) + O\left(N\log(N)^{\ell -2}\right)	\]
\end{corollary}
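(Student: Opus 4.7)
The plan is to combine the volume and shadow estimates from \cref{volume} with a standard inclusion–exclusion/sieve argument over residue classes modulo $n(Y)=\prod_{q\leq Y}q^2$. By definition, $\mathcal{L}_{p-1}(Y)$ is the union of those residue classes $\mathbf{r}\in(\ZZ/n(Y)\ZZ)^{p-1}$ that avoid the local obstructions $\mathcal{C}_{p-1,q}^{scf}$ for every $q\leq Y$. By the Chinese Remainder Theorem the number of such allowed residues is
\[
\#\mathcal{SC}_{p-1,Y}=\prod_{q\leq Y}\#\mathcal{SC}_{p-1,q}=n(Y)^{p-1}\prod_{q\leq Y}\delta_q(\mathcal{L}_{p-1}(Y)),
\]
so the problem reduces to counting lattice points of $\mathcal{R}(N,\{R_i\})$ inside each fixed residue class $\mathbf{r}+n(Y)\ZZ^{p-1}$ and summing.

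First I would apply the principle of Lipschitz to each such class. Writing $\mathcal{R}=\mathcal{R}(N,\{R_i\})$ and using that its $d$-dimensional shadows have measure $O(N\log^{\ell-2}N)$ at worst (\cref{shadowprop}, where the maximum is attained at $d=p-2$), one obtains for every admissible residue $\mathbf{r}$
\[
\#\bigl(\mathcal{R}\cap(\mathbf{r}+n(Y)\ZZ^{p-1})\bigr)
=\frac{\Vol(\mathcal{R})}{n(Y)^{p-1}}+O_Y\!\left(\frac{N\log^{\ell-2}N}{n(Y)^{p-2}}\right),
\]
the implied constant depending on $Y$ only through $n(Y)$. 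Summing over the $\#\mathcal{SC}_{p-1,Y}$ admissible residues yields
\[
\mathcal{R}_{\mathcal{L}_{p-1}(Y)}(N,\{R_i\})
=\Vol(\mathcal{R})\prod_{q\leq Y}\delta_q(\mathcal{L}_{p-1}(Y))+O_Y\!\left(n(Y)\cdot N\log^{\ell-2}N\right).
\]

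Next I would substitute the volume formula from \cref{volume}, namely $\Vol(\mathcal{R})=c_p^{-1}(N-1)\log^{\ell-1}(N)H(R_1,\ldots,R_{\ell+1})$ (with the $1/\ell!$ absorbed into $H$), and absorb the $(N-1)\mapsto N$ passage together with the $Y$-dependent factor into the error term, which remains $O(N\log^{\ell-2}N)$ because $Y$ (and hence $n(Y)$) is treated as a fixed parameter in this corollary. This yields exactly the claimed asymptotic.

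I expect the only delicate point to be bookkeeping in the principle of Lipschitz: the statement requires that the boundary of $\mathcal{R}/n(Y)$ be controlled by the projections of its faces onto coordinate hyperplanes, which is precisely what \cref{shadowprop} supplies. The rest is essentially formal, since the multiplicativity of the local densities and the CRT decomposition of residues modulo $n(Y)$ convert the uniform main term $\Vol(\mathcal{R})/n(Y)^{p-1}$ into the product $\Vol(\mathcal{R})\prod_{q\leq Y}\delta_q(\mathcal{L}_{p-1}(Y))$ without further work.
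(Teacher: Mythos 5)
Your argument is correct and is essentially the same as the paper's: the paper gives no written proof here, simply invoking ``a standard argument, as in \cite{galoisquartic}'', and that standard argument is precisely your CRT decomposition into residue classes modulo $n(Y)$, the principle of Lipschitz applied to each translated sublattice with main term $\Vol(\mathcal{R})/n(Y)^{p-1}$ and shadow-controlled error, and summation over the $n(Y)^{p-1}\prod_{q\leq Y}\delta_q(\mathcal{L}_{p-1}(Y))$ admissible residues. The only nitpick is that the dominant shadow bound comes from reducing to an even number $d'=p-3$ of paired variables rather than from $d=p-2$ directly, but this does not affect the conclusion.
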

What remains is to show that we may take the limit of $Y$. This uses one of the basic sieve techniques which appears in many related works, see \cite{davenportheilbron, galoisquartic, multiquad}, and whose notation we try to emulate here. Since $p$ is a fixed prime the length of the $n$-tuple is now $p-1$ so we exclude this from $\mathcal{L}_n(Y)$ and simply write $\mathcal{L}(Y)$. Letting $\mathcal{L}_{\infty}$ be the set with congruence conditions applied at all primes, we have a naive upper bound
\begin{align*}
	\limsup_{N\rightarrow \infty} \frac{\#\mathcal{R}_{\mathcal{L}_\infty}(N, R_1, \hdots, R_{\ell+1})}{N\log(N)^{\ell-1}}	
		&	\leq \lim_{Y\rightarrow \infty}\lim_{N\rightarrow \infty}\frac{\#\mathcal{R}_{\mathcal{L}(Y)}(N, R_1, \hdots, R_{\ell+1})}{N\log(N)^{\ell-1}}	\\
		&	\leq c_p^? \prod_{q\leq Y} \delta_q(\mathcal{L}(Y)) H(R_1, \hdots, R_{\ell+1})		
\end{align*}
We now set $\mathcal{W}_q := \{(a_1, \hdots, a_{p-1})\in \ZZ^{p-1}: (a_1,\hdots, a_{p-1}) \in \mathcal{C}_q^{scf}\}$, and write:
	\[	\mathcal{R}_{\mathcal{L}(Y)}(N, R_1, \hdots, R_{\ell+1}) \subseteq \mathcal{R}_{\mathcal{L}_\infty}(N, R_1, \hdots, R_{\ell+1}) \cup \bigcup_{q>Y} \mathcal{R}_{\mathcal{W}_q}(N, R_1, \hdots, R_{\ell+1})		\]	
This allows us to bound the quantity below as follows:
	\begin{align}\label{bigo}	
	\frac{\#\mathcal{R}_{\mathcal{L}_\infty}(N, R_1, \hdots, R_{\ell+1})}{N\log(N)^{\ell-1}} \geq \frac{\mathcal{R}_{\mathcal{L}(Y)}(N, R_1, \hdots, R_{\ell+1})}{N\log(N)^{\ell-1}} - O\left( \sum_{q>Y} \frac{\mathcal{R}_{\mathcal{W}_q}(N, R_1, \hdots, R_{\ell+1})}{N\log(N)^{\ell-1}}\right).	
	\end{align}
By the previous section, we know that
	\[	 \frac{\mathcal{R}_{\mathcal{W}_q}(N, R_1, \hdots, R_{\ell+1})}{N\log(N)^{\ell-1}}	= O(q^{-2})	\]
which is enough to show that in the limit, as $Y\rightarrow \infty$, the big O term on the RHS of \ref{bigo} tends to 0. Taking the $\liminf$ of \ref{bigo}, in conjunction with the $\limsup$ above, we have the desired result:

\[	\mathcal{R}_{\mathcal{L}_\infty}(N, R_1, \hdots, R_{\ell+1}) = c_p^{-1} \prod_{q} \delta_q(\mathcal{L}_\infty) N\log(N)^{\ell-1}H(R_1, \hdots, R_{\ell+1}) + o\left(N\log(N)^{\ell -1}\right)	\]

\noindent
In what follows we write $\delta_q$ for the $q$-adic density $\delta_q(\mathcal{L}_\infty)$.

The only thing remaining is to apply this to the case at hand: namely, to count the number of fields whose discriminant is bounded by $N$ and whose shape lies in $\{R_i\}$. The total number of such fields is given by:
	\begin{align*}
			\mathcal{N}_p(N, \{R_i\}) 	&	= c_p^{-1}  \prod_{q} \delta_q N\log(N)^{\ell-1}H(R_1, \hdots, R_{\ell+1}) + o\left(N\log(N)^{\ell -1}\right)		\\
								&	= (\pm 2^{p-2}p^{\ell-1}h_p^-)^{-1} \prod_{q} \delta_q N\log(N)^{\ell-1}H(R_1, \hdots, R_{\ell+1}) + o\left(N\log(N)^{\ell -1}\right).	
	\end{align*}

For wild (type I) fields we have that 
	\[	\Delta(K) = -p^p\prod_{i=1}^{p-1} a_i^{p-1} 	\implies N = \frac{1}{p}\cdot \left(\frac{X}{p}\right)^{1/(p-1)}\]	
and for tame (type II) fields we have 
	\[	\Delta(K) = -p^{p-2}\prod_{i=1}^{p-1} a_i^{p-1} 	\implies N = \left(\frac{X}{p^{p-2}}\right)^{1/(p-1)}.\]
	
Using this we can determine the asymptotics for the number of pure prime degree fields of type I (resp. type II) with absolute discriminant bounded by $X$ and shape in $\{R_i\}$:	
	\begin{align*}
		\mathcal{N}_{p}^I(X, \{R_i\})	= &		\frac{2p-2}{(2p-1)2^{p-2}p^{\ell+\frac{1}{p-1}}h_p^-} \prod_q \delta_q X^{1/(p-1)}\log(X)^{\ell-1}H(\{R_i\}) + o(X^{1/(p-1)}\log(X)^{\ell-1})			\\
		\mathcal{N}_{p}^{II}(X, \{R_i\})	= &		\frac{1}{(2p-1)2^{p-2}p^{\ell-1+\frac{p-2}{p-1}}h_p^-} \prod_q \delta_q X^{1/(p-1)}\log(X)^{\ell-1}H(\{R_i\}) + o(X^{1/(p-1)}\log(X)^{\ell-1})  
	\end{align*}

This, together with \cref{wildmeasure}, proves Theorem \ref{equidistribution}: the (regularized) equidistribution of shapes in the family of pure prime degree number fields.

\subsection{Examples} We end this section with two examples: first we recover the result of Harron, in \cite{purecubics}, and second we state the result in the case of pure quintic fields. 

\begin{example}
	When $p=3$ the negative part of the class number is 1 and so we have:
	\begin{align*}
		\mathcal{N}_3^I(X, \{R_1, R_2\})	&	 = 	\frac{4}{5\cdot 2 \cdot 3\cdot 3^{\frac{1}{2}}} \prod_q \delta_q X^{1/2} H(R_1, R_2) + o(X^{1/2})	\\
									&	=	\frac{2}{15\sqrt{3}} \prod_q \left(1 - \frac{3}{q^2} + \frac{2}{q^3}\right) X^{1/2} \log\left(\frac{R_2}{R_1}\right) + o(X^{1/2})	\\
		\mathcal{N}_3^{II}(X, \{R_1, R_2\})	&	=	\frac{1}{5\cdot 2 \cdot 3^{\frac{1}{2}}} \prod_q \delta_q X^{1/2} H(R_1, R_2) + o(X^{1/2})	\\
									&	=	\frac{1}{10\sqrt{3}} \prod_q \left(1-\frac{3}{q^2} + \frac{2}{q^3}\right) X^{1/2} H(R_1, R_2) + o(X^{1/2})
	\end{align*}
\end{example}

\begin{example} 
For $p=5$ the negative part of the class number is again 1, we have that $\delta_q =  \left(1- \frac{10}{p^2}+\frac{20}{p^3} - \frac{15}{p^4} + \frac{4}{p^5}\right)$ and so
	\[	\mathcal{N}_5^I(X, \{R_1, R_2, R_3\})=	\frac{1}{225\sqrt[4]{5}} \prod_q \delta_q X^{1/4}\log(X)\cdot \left(\log^2\left(\frac{R_3}{R_1}\right) - \log^2\left(\frac{R_2}{R_1}\right)\right)+ o(X^{1/4}\log(X))	\]
and 
	\[	\mathcal{N}_5^{II}(X, \{R_1, R_2, R_3\}) =	\frac{1}{360\sqrt[4]{5^3}} \prod_q \delta_q X^{1/4}\log(X)\cdot \left(\log^2\left(\frac{R_3}{R_1}\right) - \log^2\left(\frac{R_2}{R_1}\right)\right)+ o(X^{1/4}\log(X))	\]
\end{example}

\section{Pure and $F_p$ fields}\label{frobeniussection}

In this last section we prove our final theorem, that the pure prime degree number fields are exactly those $F_p$ number fields, with degree $(p-1)$-resolvent equal to $\QQ(\zeta_p)$. We begin by discussing the group theoretic prerequisites, defining the group $F_p$, and defining what exactly we mean by the resolvent field. Once we have the relevant tools and definitions in hand we prove the aforementioned claim which allows us to phrase the above study in the flavor of Malle.

\subsection{Group Theory}  We first define Frobenius groups in general (following the nice narrative laid out by Terrance Tao, in \cite{taoblog}), and state some well known facts about these groups. We will then define the specific Frobenius group, $F_p$, to which we refer and state some facts that will be of use for us below. 
\begin{definition} 
	We call $G$ a Frobenius group if there exists a subgroup $H$ of $G$ such that: $\left(H\cap gHg^{-1}\right) = \{1\}$ for all $g\in G\setminus H$. This yields the following decomposition:
	\[	G = \bigcup_{gH\in G\setminus H} (gHg^{-1}\setminus\{1\}) \cup J	\]	
\end{definition}

We note that $H$ and $J$ are often referred to as the {\bf Frobenius complement} and {\bf Frobenius kernel}, respectively. The following theorem will then provide the motivation for our definition of $F_p$ that we use throughout:

\begin{theorem}[Frobenius (1901)]
	Let $G$ be a Frobenius group with complement $H$ and kernel $J$. Then $K\triangleleft G$ and $G = J \rtimes H$. 
\end{theorem}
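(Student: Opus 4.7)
The plan is to follow Frobenius's classical approach from 1901 via induced characters. The structural conclusion $G = J \rtimes H$ is easy once we know $J \triangleleft G$: the Frobenius condition forces $J \cap H = \{1\}$, and a counting argument gives $|J| = [G:H]$, since the non-identity elements of the $[G:H]$ distinct conjugates of $H$ are pairwise disjoint (two conjugates intersecting in a non-identity element would, after conjugating back, produce a non-identity element of $H \cap x^{-1}Hx$ for some $x \notin H$) and contribute $[G:H](|H|-1) = |G| - [G:H]$ elements of $G \setminus J$. Combined with normality this yields $G = JH$ with trivial intersection, hence the semidirect product. The real content is therefore showing that $J$ is a subgroup, and in fact normal.

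To produce the normal subgroup, I would construct, for each irreducible character $\chi$ of $H$, a class function $\chi^{\ast}$ on $G$ that restricts to $\chi$ on $H$ and is constant equal to $\chi(1)$ on $J$. Set $\psi := \chi - \chi(1) \cdot 1_H$, a virtual character of $H$ of degree zero, and define $\chi^{\ast} := \psi^G + \chi(1) \cdot 1_G$, where $\psi^G$ is the induced class function. The Frobenius condition is what makes induction behave: for $h \in H \setminus \{1\}$, the only $x \in G$ with $xhx^{-1} \in H$ lie in $H$ itself, since any $x \notin H$ would force $h \in H \cap x^{-1}Hx = \{1\}$. Hence the induction formula collapses to $\psi^G(h) = \psi(h)$. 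For $j \in J \setminus \{1\}$ the analogous sum is empty, so $\psi^G(j) = 0$ and $\chi^{\ast}(j) = \chi(1)$.

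The main obstacle is showing $\chi^{\ast}$ is a genuine (not merely virtual) irreducible character whenever $\chi$ is. The plan is to compute $\langle \chi^{\ast}, \chi^{\ast}\rangle_G$ via Frobenius reciprocity. Since $\psi^G|_H = \psi$ (using $\psi(1) = 0$ at the identity), reciprocity gives $\langle \psi^G, \psi^G\rangle_G = \langle \psi,\psi\rangle_H = 1 + \chi(1)^2$ and $\langle \psi^G, 1_G\rangle_G = \langle \psi, 1_H\rangle_H = -\chi(1)$ whenever $\chi \neq 1_H$ is irreducible. Combining, $\langle \chi^{\ast}, \chi^{\ast}\rangle_G = 1$, so $\pm \chi^{\ast}$ is an irreducible character, and $\chi^{\ast}(1) = \chi(1) > 0$ fixes the sign as positive. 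This step is famously delicate: no proof of Frobenius's theorem avoiding character theory is known.

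Having obtained genuine irreducible characters $\chi^{\ast}$ of $G$ with $J \subseteq \ker \chi^{\ast}$ for every nontrivial irreducible $\chi$ of $H$, I would set $N := \bigcap_{\chi \neq 1_H} \ker \chi^{\ast}$. Then $N$ is a normal subgroup of $G$ containing $J$. Conversely, if some $g \in N \setminus \{1\}$ lay in a conjugate $xHx^{-1}$, then $h := x^{-1}gx \in H \setminus \{1\}$ would satisfy $\chi(h) = \chi^{\ast}(g) = \chi(1)$ for every nontrivial irreducible $\chi$ of $H$ (and trivially for $1_H$), contradicting column orthogonality of the character table of $H$. Hence every element of $N$ lies in $J$, so $N = J$ is normal, and the structural argument from the first paragraph completes the proof.
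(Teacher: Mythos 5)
Your proposal is a correct and complete rendition of Frobenius's classical character-theoretic proof: the counting argument for $|J|=[G:H]$, the construction $\chi^{\ast}=\psi^G+\chi(1)\cdot 1_G$ with $\psi=\chi-\chi(1)\cdot 1_H$, the collapse of the induction formula on $H\setminus\{1\}$ and its vanishing on $J\setminus\{1\}$, the norm computation $\langle\chi^{\ast},\chi^{\ast}\rangle_G=1$ via Frobenius reciprocity, and the identification $J=\bigcap_{\chi\neq 1_H}\ker\chi^{\ast}$ by column orthogonality are all sound. There is nothing in the paper to compare this against: the theorem is quoted as a classical result of Frobenius (1901), with a pointer to Tao's expository account, and no proof is given in the text (note also that the ``$K\triangleleft G$'' in the statement is a typo for $J\triangleleft G$, which is what you correctly prove). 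Your argument is the standard one and fills that citation honestly.
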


This theorem shows us that, in general, Frobenius groups can be realized as a semi direct product of the complement and the kernel and hence we define the Frobenius group, $F_p$, as follows:
	\[	F_p := \mathbb{F}_p \rtimes \mathbb{F}_{p}^\times = \langle \sigma, \tau \mid \sigma^p = \tau^{p-1} = 1, \tau\sigma\tau^{-1} = \sigma^g\rangle	\]
where $g$ is a primitive root modulo $p$. As $C_p$ is a normal Sylow $p$ subgroup of $F_p$, we define the \textbf{degree $(p-1)$-resolvent} of $\tilde{K}$, which we will refer to simply as the resolvent (the degree being clear), to be the unique fixed field of $C_p$ in $\tilde{K}$. We denote the resolvent of $\tilde{K}$ by $K_{p-1}$ and we have the following field diagram:
\[
\begin{tikzcd}
												&	\tilde{K}		\\
	K_{p-1}\arrow[dash]{ur}{\langle \sigma \rangle}			&	K\arrow[dash]{u}		\\
	\QQ\arrow[dash]{u}{\langle \tau \rangle}\arrow[dash]{ur}
\end{tikzcd}
\]

\subsection{Pure prime degree fields and $F_p$ fields}
With the necessary background covered we now turn our attention to the proof of Theorem \ref{theoremgalois}.

\begin{proof}
($\Rightarrow$) If $K$ is pure then $K\cong \QQ(\alpha)$ where $\alpha$ is the real root of $f_\alpha(x) = x^p - m$ where $m\in \QQ$, $m\not= \pm 1$ and $m$ is $p$-power free. It is clear that the other roots are $\zeta_p^i\alpha$ for $i\in\{1,\hdots, {p-1}\}$ hence $\QQ(\zeta_p)\subseteq \tilde{K}$ and so it is the resolvent as desired.

\noindent
($\Leftarrow$) Consider $K$, $[K: \QQ] = p$, with $\Gal(\tilde{K}/\QQ)\iso F_p \iso  \mathbb{F}_p \rtimes \mathbb{F}_p^\times$ and resolvent field equal to $\QQ(\zeta_p)$. This gives us the following tower:
\[
\begin{tikzcd}
												&	\tilde{K}		\\
	\QQ(\zeta_p)\arrow[dash]{ur}{\langle \sigma \rangle}		&	K\arrow[dash]{u}		\\
	\QQ\arrow[dash]{u}{\langle \tau \rangle}\arrow[dash]{ur}
\end{tikzcd}
\]
Where $\left(\ZZ/p\ZZ\right)^\times = \langle \tau \rangle$ and $\ZZ/p\ZZ = \langle \sigma \rangle$. We will show that $K = \QQ\left(\sqrt[p]{m'}\right)$ for some $m'\in\QQ$. 
\\
\\
Fixing $g$ to be a generator of $\left(\ZZ/p\ZZ\right)^\times$; we have that
	\[	 F_p = \langle \tau, \sigma \mid \tau^{p-1} = \sigma^p = 1 \text{ and } \tau \sigma = \sigma^g \tau \rangle	\]
and using this we see that $\tau(\zeta_p) = \zeta_p^g$, $\sigma(\alpha) = \zeta_p\alpha$, $\sigma(\zeta_p) = \zeta_p$. We know by Kummer Theory that $K_p = \QQ(\zeta_p)(\sqrt[p]{m})$ for some $m\in \QQ(\zeta_p)$, so letting $\alpha = \sqrt[p]{m}$ we can consider the element 
	\[	\theta = \prod_{i=1}^{p-1} \tau^i(\alpha)  \]
It is clear that $\theta$ is fixed by $\tau$ and, as there are no intermediate extensions of $K/\QQ$, that $\theta\in K$ or $\theta \in \QQ$. Showing that $\sigma$ acts non-trivially on $\theta$ will show that $\theta\in K\setminus \QQ$ and hence $K = \QQ(\theta)$.
The condition that $\tau\sigma = \sigma^g \tau$ is equivalent to $\sigma\tau = \tau \sigma^{g^{-1}}$, where $g^{-1}\in \ZZ$ is such that $g^{-1}g \equiv 1 \mod{p}$,
and using this we see that:
	\[	\sigma\tau^i = \tau^i \sigma^{g^{-i}}	\]
Using that $\sigma$ acts on $\alpha$ via multiplication by $\zeta_p$, and acts trivially on $\zeta_p$, we have that:
	\begin{align*}
		\sigma\tau^i(\alpha)	&	= 	\tau^i\sigma^{g^{-i}}(\alpha)	\\
						&	=	\tau^i (\zeta_p^{g^{-i}}\alpha)	\\
						&	=	\zeta^{g^i\cdot g^{-i}} \tau^i(\alpha)	\\
						&	=	\zeta\tau^i(\alpha)
	\end{align*}
and this implies that:
	\begin{align*}
		\sigma(\theta)	&	=	\sigma\left(\prod_{i=1}^{p-1} \tau^i(\alpha)\right)	\\
					&	=	\prod_{i=1}^{p-1} \sigma\tau^i(\alpha)	\\
					&	=	\prod_{i=1}^{p-1} \zeta\tau^i(\alpha)		\\
					&	=	\zeta^{p-1}\theta.					\\
					&	\not= \theta
	\end{align*}	
Thus, letting $m' = \prod_{i=1}^{p-1} \tau^i(m)\in \QQ$ we have that $(m') = \theta^p$ and so $K=\QQ\left(\sqrt[p]{m'}\right)$ as desired. 
\end{proof}

\begin{remark}
We note that this result is widely stated as fact in the case of cubic number fields where we have the more familiar notion of resolvent field, however we were unable to find a general statement of this result. 
\end{remark}

\subsection{Conclusion}

Theorem \ref{theoremgalois} allows us to phrase this study in terms of Galois conditions and resolvent fields, rather than pure prime degree number fields. Our desire to reframe the study in this way is partially motivated by the work of Cohen and Thorne who count, for example, $D_p$ extensions with a fixed quadratic resolvent \cite{CohenThorne}. In turn, it is motivated by Malle's conjecture and all the work that is being done towards counting number fields with prescribed Galois group. Using this, we are able to state all the results of this paper as results about shapes and (regularized) equidistribution of $F_p$ number fields with resolvent field $\QQ(\zeta_p)$. Furthermore, the work in this paper can then be used to show that the number of $F_p$ number fields with fixed resolvent $\QQ(\zeta_p)$, and discriminant bounded by $X$, grows like:
	\[	 X^{\frac{1}{p-1}}\log^{p-2}(X)	\]
Of course the number of such fields seems quite minimal given that they arise as pure extensions but this begs the question, which the author hopes to answer in future work: 

\begin{center} \emph{{\bf Question:} How does this study compare to those $F_p$ number fields, $K$, with resolvent $K_{p-1}\not=\QQ(\zeta_p)$?} \end{center}

This is motivated by the result mentioned in \ref{cubicresolvent} regarding the asymptotics of cubic number fields with a fixed quadratic resolvent, $K_2= \QQ(\sqrt{d})$.
Of course when $d=-3$ we have that $K_2 = \QQ(\zeta_3) $ and so what one might expect is that the asymptotic for Frobenius fields with resolvent $K_{p-1}\not=\QQ(\zeta_p)$ has fewer $\log$ terms.

\section*{Acknowledgments}

The author would like to thank Rob Harron and Ila Varma for their guidance and continued support during all phases of this project. Thanks to Piper H for many helpful conversations and Pavel Guerzhoy, Piper H, Michelle Manes, Khoa Nguyen, Ari Shnidman and Christelle Vincent for their feedback on earlier versions of this work. Special thanks go to Henri Cohen, who answered a question of the author on mathoverflow regarding a determinant formula that appears in the proof of \cref{jacobiandeterminant}, and to Vlad Matei who referred the author to the work of Carlitz and Olson, in \cite{carlitzolson}.

\bibliography{mybib}

\end{document}